\def\R{\mathbb{R}}
\def\C{\mathbb{C}}
\def\e{\varepsilon}
\def\a{\alpha}
\def\b{\beta}
\def\la{\lambda}
\def\d{\delta}
\def\k{\kappa}
\def\o{\omega}
\def\nub{\overline{\nu}}
\def\mub{\overline{\mu}}
\def\fp{f'(0)}
\def\vp{\varphi}
\def\Gt2{\widetilde{\Gamma_2}}
\def\su{\underline{u}}
\def\sv{\underline{v}}
\def\cs{\overline{c}}
\def\las{\overline{\lambda}}
\def\phib{\overline{\phi}}
\def\phit{\tilde{\phi}}
\def\uh{\hat{u}}
\def\vh{\hat{v}}
\def\Uh{\hat{U}}
\def\Vh{\hat{V}}
\def\ut{\tilde{u}}
\def\vt{\tilde{v}}
\def\xd{\xi^2}
\renewcommand{\Re}[1]{{\cal R}e  \left ( #1 \right )}
\renewcommand{\Im}[1]{ {\cal I}m  \left ( #1 \right )}
\DeclareMathOperator{\sgn}{sgn}
\def\lp{\left(}
\def\rp{\right)}
\def\lb{\left|}
\def\rb{\right|}
\def\lV{\left\Vert}
\def\rV{\right\Vert}
\def\MD{\mathcal{D}}
\def\MC{\mathcal{C}}
\newtheorem{theorem}{Theorem}
\newtheorem{prop}[theorem]{Proposition}
\newtheorem{lem}[theorem]{Lemma}
\newtheorem{cor}[theorem]{Corollary}
\newtheorem{lemA}{Lemma}[section]
\theoremstyle{definition}
\newtheorem{definition}{Definition}[section]
\theoremstyle{remark}
\newtheorem{remark}{Remark}[section]
\title{Uniform dynamics for Fisher-KPP
propagation driven by a line of fast diffusion
under a singular limit}
  \author{Antoine Pauthier%
  \thanks{e-mail: \texttt{antoine.pauthier@math.univ-toulouse.fr}}}
\affil{Institut de Math\'ematiques de Toulouse ; UMR5219 \\ Universit\'e de Toulouse ; CNRS \\ UPS IMT, F-31062 Toulouse Cedex 9, France}
\begin{document}
	\maketitle

\begin{abstract}
The purpose of this paper is to understand the links between a model introduced in 2012 by H. Berestycki, J.-M. Roquejoffre
and L. Rossi and a nonlocal model studied by the author in 2014. The general question is to investigate the influence of a line of fast 
diffusion on Fisher-KPP propagation. In the initial model, the exchanges are modeled by a Robin boundary condition, whereas in the nonlocal model the 
exchanges are described by integral terms. 
For both models was showed the existence of an enhanced spreading in the direction of the line.
One way to retrieve the local model from the nonlocal one is to consider 
integral terms tending to Dirac masses. The question is then how the dynamics given by the nonlocal
model resembles the local one. We show here that the nonlocal dynamics tends to the local one in a rather strong sense.
\end{abstract}

\section{Introduction}
\paragraph{Presentation of the models}
This paper is concerned with the large time behaviour and propagation phenomena 
for reaction-diffusion equations with a line of fast diffusion. Our model will degenerate, when a small parameter tends to 0,
to a singular limit. The results that we will present will be uniform with respect to this small parameter.
The model under study (\ref{RPeps}) was introduced in 2014 by the author in \cite{Pauthier}.
\begin{equation}
\label{RPeps}
\begin{cases}
\partial_t u-D \partial_{xx} u = -\mub u+\int \nu_\e(y)v(t,x,y)dy & x \in \R,\ t>0 \\
\partial_t v-d\Delta v = f(v) +\mu_\e(y)u(t,x)-\nu_\e(y)v(t,x,y) & (x,y)\in \R^2,\ t>0.
\end{cases}
\end{equation}
A two-dimensional environment (the plane $\R^2$) includes a line (the line $\{(x,0), x\in\R\}$) in which 
fast diffusion takes place while reproduction and usual diffusion only occur outside the line. For the sake of simplicity, we will refer 
to the plane as ``the field`` and the line as ``the road``, as a reference to the biological situations. 
The density of the population is designated by $v=v(t,x,y)$ in the field, 
and $u=u(t,x)$ on the road. Exchanges of population between the road and field are defined by two nonnegative compactly supported functions
$\nu$ and $\mu.$ These functions will be called the \textit{exchange functions}. The density of individuals who jump from a 
point of the field to the road is represented by $y\mapsto \nu_\e(y)$, from the road to a point of the field by 
$y \mapsto \mu_\e(y),$ with the following scaling with $\e>0:$
$$
\nu_\e(y)=  \frac{1}{\e}\nu\lp\frac{y}{\e}\rp,\ \mu_\e(y)=  \frac{1}{\e}\mu\lp\frac{y}{\e}\rp.
$$
We use the notation $\mub=\int \mu,$ $\nub=\int\nu.$ 

It is easy to see that
$\nu_\e\to \nub\d$ and $\mu_\e\to\mub\d$ as $\e\to 0$ in the distribution sense, where $\d=\d_0$, the Dirac function in 0.
Hence, at least formally, the above system (\ref{RPeps}) tends to the following system (\ref{BRReq2})  
where exchanges of population are localised on the road:
\begin{equation}
\label{BRReq2}
\begin{cases}
\partial_t u-D \partial_{xx} u= \nub v(t,x,0)-\mub u & x\in\R,\ t>0\\
\partial_t v-d\Delta v=f(v) & (x,y)\in\R\times\R^*,\ t>0\\
v(t,x,0^+)=v(t,x,0^-), & x\in\R,\ t>0 \\
-d\left\{ \partial_y v(t,x,0^+)-\partial_y v(t,x,0^-) \right\}=\mub u(t,x)-\nub v(t,x,0) & x\in\R,\ t>0.
\end{cases}
\end{equation}
This model was introduced in 2013 in \cite{BRR1} by H. Berestycki, J.-M. Roquejffre and L. Rossi to describe 
biological invasions in a plane when a strong diffusion takes place on a line.
Considering a nonnegative, compactly supported initial datum
$(u_0,v_0)\neq(0,0)$, the authors proved  
the existence of an  asymptotic speed of spreading $c^*_0$ in
the direction of the road for the system (\ref{BRReq2}) for a KPP-type nonlinearity. 
They also explained the dependence of $c^*_0$ on $D,$ the coefficient of diffusion on the road.

The same kind of results was investigated in \cite{Pauthier} for our system (\ref{RPeps}) with fixed $\e,$ 
say $\e=1$ for instance. The main theorem was the following spreading result:
 \begin{theorem}\label{spreadingthm}
 We consider the nonlocal system (\ref{RPeps}) with a KPP-type nonlinearity and two nonnegative compactly supported exchange functions 
 $\nu$ and $\mu,$ with $\nub,\mub>0.$
Let $(u_\e,v_\e)$ be a solution of (\ref{RPeps}) with a nonnegative, compactly supported initial datum $(u_0,v_0)$.
Then, for all $\e>0,$ there exists an asymptotic speed of spreading $c^*_\e$ and a unique positive bounded stationary solution of (\ref{RPeps})
$(U_\e,V_\e)$ such that, 
pointwise in $y$, we have: 
 \begin{itemize}
  \item for all $c>c^*_\e$, $\displaystyle\lim_{t\to\infty}\sup_{|x|\geq ct}(u(x,t),v(x,y,t)) = (0,0)$ ;
  \item for all $c<c^*_\e$, $\displaystyle\lim_{t\to\infty}\inf_{|x|\leq ct}(u(x,t),v(x,y,t)) = (U_\e,V_\e)$.
 \end{itemize}
\end{theorem}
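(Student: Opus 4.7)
The plan is to follow the classical KPP spreading program adapted to a system of road–field type, breaking the proof into three essentially independent tasks: (i) construct and characterise the stationary state $(U_\e,V_\e)$; (ii) identify the spreading speed $c^*_\e$ through a linearised dispersion relation and deduce the upper bound; (iii) prove the lower bound by building a compactly supported subsolution that invades at every speed $c<c^*_\e$.

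For (i), I would first look for an $x$-independent stationary pair: a constant $U_\e$ on the road and a function $V_\e=V_\e(y)$ on the field, reducing the problem to a second-order ODE in $y$ coupled to an algebraic equation. Existence of a positive bounded solution follows from a monotone iteration between an explicit constant supersolution given by the equilibrium of $(-\mub U+\nub V,\ f(V)+\mub U-\nub V)$ and a small positive subsolution built from the principal eigenfunction of the linearisation at zero. Uniqueness and the fact that $(U_\e,V_\e)$ does not depend on $x$ are then established by a sliding/sweeping argument à la Berestycki–Nirenberg, using the KPP monotonicity of $f(v)/v$.

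For (ii), I would insert the ansatz $u(t,x)=e^{-\la(x-ct)}$, $v(t,x,y)=e^{-\la(x-ct)}\pl(y)$ into the linearisation of (\ref{RPeps}) at $(0,0)$. This yields, for each $\la>0$, an inhomogeneous ODE
\[
-d\pll(y)+\bigl(\nu_\e(y)-\fp+\la c-d\la^2\bigr)\pl(y)=\mu_\e(y),
\]
coupled to the scalar road relation $\la c-D\la^2=-\mub+\int\nu_\e\pl$. Via a Krein–Rutman / implicit function argument one shows that for $c$ above a threshold $c(\la)$ there is a unique positive bounded $\pl$, and the candidate speed is $c^*_\e=\inf_{\la>0} c(\la)$, the minimum being attained at some $\la^*$. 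Using $(e^{-\la^*(x-ct)},\pl(y) e^{-\la^*(x-ct)})$ multiplied by a large constant as an exponential supersolution and comparing with the solution of (\ref{RPeps}) then gives the upper bound: solutions are exponentially small in $|x|\geq ct$ for every $c>c^*_\e$.

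For (iii), which is the genuine obstacle, I would construct a compactly supported subsolution of the form $\max(0,\,A e^{-\la(x-ct)}\sin(\eta(x-ct))\pl(y))$ on a bounded $x$-interval, for $c<c^*_\e$, $\la$ close to $\la^*$ and $\eta$ small, with $\pl$ replaced by a perturbed positive eigenfunction to absorb the imaginary-part correction. The delicate point is that the nonlocal exchange $\int\nu_\e v\,dy$ couples all heights $y$, so the usual pointwise cutoff procedure leaks mass through the integral term; one has to choose the $y$-profile precisely so that the perturbed dispersion relation still has two real roots straddling $\la^*$. Once such a subsolution exists, comparison yields $\liminf v \geq $ something positive on expanding $x$-intervals moving at speed $c$; then a Liouville-type result combined with uniqueness of $(U_\e,V_\e)$ upgrades this lower bound to convergence toward $(U_\e,V_\e)$ on compacts, completing the proof. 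The main technical hurdle throughout is the non-self-adjoint, nonlocal eigenvalue problem for $\pl$: its continuous and strictly convex dependence on $\la$, and the existence of a minimising $\la^*$, must be proved by hand since standard KPP dispersion arguments do not directly apply.
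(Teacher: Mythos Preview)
The paper does not actually prove Theorem~\ref{spreadingthm}: it is quoted from the author's earlier work \cite{Pauthier} as background, and only the constructions needed later are recalled, in Section~\ref{sectionvitesse} (Proposition~\ref{defspreadingspeed} and the surrounding discussion) and in the paragraph ``Background on subsolutions'' of the last section. Your three-step outline follows the same overall KPP program as that sketch, so at the level of strategy it is correct.

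Two substantive differences with what the paper describes are worth flagging. First, the paper does not present $c^*_\e$ as $\inf_\la c(\la)$; it characterises it geometrically as the smallest $c$ for which the curve $\Gamma_1$ coming from the road equation, $\la\mapsto -D\la^2+c\la+\mub$, meets the curve $\Gamma_2^\e$ defined by $\la\mapsto\int\nu_\e\phi(\cdot;\e,c,\la)$, where $\phi$ solves the \emph{inhomogeneous} ODE~(\ref{gamma2e}) with source $\mu_\e$. The relevant structural fact is the strict \emph{concavity} of $\la\mapsto\Psi_2^\e(c,\la)$ together with monotonicity in $c$, not convexity. Your Krein--Rutman formulation is slightly off: the field problem is not a homogeneous eigenvalue problem for $\phi$ but an inhomogeneous one, and this is precisely what makes the analysis nonstandard.

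Second, and more importantly for step~(iii), the subsolution recalled in the paper is built on a bounded strip $\Omega^L=\R\times(-L,L)$ with Dirichlet conditions at $y=\pm L$ and a penalised reaction $f'(0)-\d$ (system~(\ref{subsolRPeps})); this yields a speed $c^*_\e(L)\nearrow c^*_\e$, and only afterwards is a perturbative argument used to obtain compact support in $x$. Your ansatz $\max\bigl(0,\,Ae^{-\la(x-ct)}\sin(\eta(x-ct))\phi_\la(y)\bigr)$ is compactly supported in $x$ but not in $y$, so it cannot sit below a compactly supported initial datum; some truncation in $y$ is unavoidable. Note also that since $\nu_\e,\mu_\e$ are supported in $(-\e,\e)$, the ``leakage through the integral term'' you mention is confined to a thin neighbourhood of the road, not spread over all heights.
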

This result is similar to the one showed in \cite{BRR1}, where 
the steady state is given by $(U_0,V_0)=\lp \frac{\nub}{\mub},1\rp.$
So, a natural question is: are the limits in Theorem \ref{spreadingthm} uniform in $\e$ ?
A first reasonable guess is that the spreading speed $c^*_\e$ tends to the spreading speed $c^*_0$
associated to the limit model (\ref{BRReq2}). This point will be developed in the second section of this paper.
The problem of the commutation of the two limits $\e\to0$ and $t\to+\infty$ in Theorem \ref{spreadingthm} is more intricate.
It is the main purpose of this paper. It involves both tools
from functional analysis concerning singular limits and reaction-diffusion methods about spreading phenomena. 

\paragraph{Assumptions}
We always assume that the initial datum $u_0,v_0$ is nonnegative, continuous and compactly supported, with 
$(u_0,v_0)\not\equiv(0,0).$ The reaction term $f$ satisfies:
\begin{equation}\label{kppassumtions}
 f\in C^2([0,1]), \ f(0)=f(1)=0, \ \forall s\in (0,1),\ 0<f(s)\leq f'(0)s.
\end{equation}
Such a reaction term is, as usual, referred to as of KPP type (from the article of Kolmogorov, Petrovsky and Piskounov \cite{KPP}).
We extend it to a uniformly Lipschitz function outside $(0,1),$ satisfying
\begin{equation}\label{lipf}
 \lim_{s\to +\infty} \frac{f(s)}{s}<-2\frac{\nub^2}{d}.
\end{equation}
In particular, we suppose $2\frac{\nub^2}{d}<\lV f\rV_{Lip}<+\infty.$ The assumption (\ref{lipf}) seems to be technical, but such a kind of uniform
coercivity appears to be crucial in order to get a uniform bound for the stationary solutions.

We make the following assumptions on the exchange functions.
\begin{itemize}
	\item The exchange functions $\nu$ and $\mu$ are smooth, nonnegative and compactly supported. Without loss of generality we will assume
	\begin{equation}\label{supportmunu}
	\textrm{supp}(\mu,\nu)\subset(-1,1).	 
	\end{equation}
	\item For the sake of simplicity, we will take $\nub = 1$, as in ~\cite{BRR1}.
\end{itemize}
The parameters $d,D,\mub,\fp$ and the two functions $\nu$ and $\mu$ are fixed once and for all.

\paragraph{Main results of the paper}
The main result of the paper is that spreading in the $x$-direction is indeed uniform in $\e.$ 
Set $c^*_0$ the asymptotic speed of spreading associated to the
initial model (\ref{BRReq2}).
\begin{theorem}\label{uniformspreading}
For $\e>0,$ let us denote $(u_\e,v_\e)$ the solution of system (\ref{RPeps}). There exists $m>0$ such that if $(u_0,v_0)\leq \lp\frac{m}{\mub},m\rp$
we have:
\begin{itemize}
\item  $\forall c>c^*_0,$ $\forall \eta>0,\ \exists T_0,\e_0$ such that 
$\forall t>T_0,\forall \e<\e_0,$ $\displaystyle \sup_{|x|>ct} |u_\e(t,x)|<\eta.$ 
\item  $\forall c<c^*_0,$ $\forall \eta>0,\ \exists T_0,\e_0$ such that 
$\forall t>T_0,\forall \e<\e_0,$ $\displaystyle \sup_{|x|<ct} \lb u_\e(t,x)-\frac{1}{\mub}\rb<\eta.$ 
\end{itemize}
\end{theorem}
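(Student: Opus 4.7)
The plan is to prove the two items separately, combining uniform-in-$\e$ super/subsolution constructions with the convergence $c^*_\e \to c^*_0$ that will be established in Section 2 of the paper.

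For the \textbf{upper bound} (first item), fix $c > c^*_0$. Since $c^*_\e \to c^*_0$, we have $c > c^*_\e$ for all $\e$ small enough. I would construct a family of explicit exponential supersolutions of the form
$$ \overline{u}_\e(t, x) = A e^{-\la(x - ct)}, \qquad \overline{v}_\e(t, x, y) = A \psi_\e(y) e^{-\la(x - ct)}, $$
where $\psi_\e$ solves the elliptic ODE obtained by plugging this ansatz into the system (\ref{RPeps}) linearised around $(0,0)$. The key point is to check that the resulting dispersion relation, which determines the critical $\la$, converges uniformly as $\e \to 0$ to the one of the BRR system (\ref{BRReq2}): this is what lets us choose $\la(c)$ and the constant $A$ independently of $\e$ (and large enough so that $(u_0, v_0) \leq (\overline{u}_\e, \overline{v}_\e)$ at $t=0$). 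The parabolic comparison principle then yields $u_\e(t, x) \leq A e^{-\la(|x| - ct)}$, which is the first claim.

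For the \textbf{lower bound} (second item), fix $c < c^*_0$ and $c' \in (c, c^*_0)$, so that $c' < c^*_\e$ for $\e$ small. I would proceed in two steps. \emph{Step 1: uniform local convergence to the stationary state.} Show that on every bounded $x$-interval, $(u_\e(t, \cdot), v_\e(t, \cdot, \cdot)) \to (U_\e, V_\e)$ as $t \to +\infty$ uniformly in $\e \in (0, \e_0)$, and that $(U_\e, V_\e) \to (1/\mub, 1)$ as $\e \to 0$ uniformly on compacta. The convergence of the stationary states uses the uniform $L^\infty$ bound guaranteed by assumption (\ref{lipf}) and a singular-perturbation analysis of the corresponding elliptic problem. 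The convergence in time follows from the smallness assumption $(u_0, v_0) \leq (m/\mub, m)$, which prevents the solution from overshooting, combined with a uniform-in-$\e$ compactly supported subsolution built at an intermediate time $t_0$. \emph{Step 2: propagation in the moving frame.} Given $x_0$ with $|x_0| < ct$, consider the translates $u_\e(t, x_0 + \cdot)$ and apply a moving-frame argument: one extracts a space-time limit along sequences $(\e_n, t_n, x_n)$ with $|x_n| < c t_n$, shows via the upper bound that this limit stays bounded away from the trivial state, and uses Step 1 together with a Liouville-type rigidity for the limit system (\ref{BRReq2}) to identify it with $(1/\mub, 1)$.

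The \textbf{main obstacle} is the uniform convergence in Step 1. As $\e \to 0$, the exchange functions $\mu_\e, \nu_\e$ concentrate into Dirac masses, so the smoothing in $y$ near $y=0$ degenerates and standard parabolic estimates alone do not give uniform compactness. I expect the resolution to rest on (a) careful elliptic estimates for the stationary problem that exploit the smoothness of $\mu, \nu$ before rescaling, yielding uniform bounds in $C^0$ in $y$ and $C^1$ away from $y = 0$, and (b) a Liouville-type classification of bounded entire solutions of (\ref{BRReq2}) to pin down the limit of translated solutions in Step 2. Without these two ingredients, the commutation of the limits $\e \to 0$ and $t \to +\infty$---the announced main difficulty of the paper---cannot be justified.
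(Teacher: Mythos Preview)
Your treatment of the first item is essentially the same as the paper's: exponential supersolutions coming from the linearised problem, made uniform in $\e$ via the convergence $c^*_\e\to c^*_0$ and the uniform-in-$\e$ control of the profile $\psi_\e$. Nothing to add there.

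For the second item, however, your route diverges from the paper's and has a real gap. The paper does \emph{not} proceed by extracting space--time limits along sequences $(\e_n,t_n,x_n)$ and invoking a Liouville classification for (\ref{BRReq2}). Instead, its central tool is Theorem~\ref{convsol}: for every fixed finite time~$t$, the solution $(u_\e,v_\e)(t)$ converges to the solution $(u,v)(t)$ of the limit system in $L^\infty(\R)\times L^\infty(\R^2)$, uniformly on compact time intervals. This is proved by showing that $L_\e$ and $L_0$ are sectorial with a common sector, obtaining uniform-in-$\e$ resolvent bounds (Lemma~\ref{grandslambda}, Proposition~\ref{petitslambda}), and feeding these into the contour representation of the semigroup plus a Gronwall argument. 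With Theorem~\ref{convsol} in hand, the spreading argument is direct: since $(u,v)(t_1)$ exceeds a compactly supported travelling subsolution of (\ref{BRReq2}) at some finite $t_1$, the same holds for $(u_\e,v_\e)(t_1)$ once $\e$ is small; comparison with the $\e$-travelling subsolutions (whose profiles also converge as $\e\to0$) then propagates a lower bound at speed $>c$.

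The gap in your scheme is precisely where you locate the ``main obstacle''. In your Step~1 you assert uniform-in-$\e$ convergence of $(u_\e,v_\e)(t)$ to $(U_\e,V_\e)$ on compacta, claiming it follows from a ``uniform-in-$\e$ compactly supported subsolution built at an intermediate time $t_0$''. But to place such a subsolution under $(u_\e,v_\e)(t_0)$ \emph{uniformly in $\e$} you must already know that $(u_\e,v_\e)(t_0)$ is bounded below independently of $\e$---and that is exactly what Theorem~\ref{convsol} provides and what your outline does not supply. Your Step~2 compounds the difficulty: extracting a limit along $(\e_n,t_n,x_n)$ with $\e_n\to0$ means the equation itself is changing singularly (the exchange terms concentrate to Dirac masses), so standard parabolic compactness does not yield an entire solution of a \emph{fixed} system to which a Liouville theorem could apply. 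Any rigorous version of this extraction would again require a quantitative convergence of the $\e$-dynamics to the limit dynamics---i.e.\ something equivalent to Theorem~\ref{convsol}. In short, the resolvent/semigroup convergence is not an optional technical ingredient here; it is the mechanism that lets one commute $\e\to0$ and $t\to+\infty$, and your proposal does not replace it.
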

The idea of the proof is to show that every solution $(u_\e,v_\e)$ is above some travelling subsolutions 
in finite time. Then, we use the convergence of the spreading speed $c^*_\e$ to $c^*_0$, which yields travelling subsolutions 
at some speed close to $c^*_0.$ Hence, our main tool relies on the following convergence theorem.

\begin{theorem}\label{convsol}
Let $(u,v)$ be the solution of the limit system (\ref{BRReq2})
and $(u_\e,v_\e)$ be the solution of the $\e$-system (\ref{RPeps}) for $\e>0.$ Let $(u_0,v_0)$ be a common initial datum for both systems. Then:
\begin{equation*}
\|(u-u_\e)(t)\|_{L^\infty(\R)}+\|(v-v_\e)(t)\|_{L^\infty(\R^2)}\underset{\e\to0
		}{\longrightarrow} 0. 
\end{equation*}
The above convergence is uniform in every compact set in $t$ included in $(0,+\infty).$
\end{theorem}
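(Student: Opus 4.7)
My plan is a compactness-and-identification argument, exploiting the uniqueness of solutions to the limit transmission problem (\ref{BRReq2}) so that it suffices to extract a convergent subsequence of $(u_\e, v_\e)$ and show its limit solves (\ref{BRReq2}).

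\emph{A priori bounds and interior regularity.} Using the maximum principle and the coercivity assumption (\ref{lipf}), $(u_\e, v_\e)$ is bounded in $L^\infty$ uniformly in $\e$. The $u_\e$-equation has a right-hand side uniformly bounded by $\mub\|u_\e\|_\infty+\nub\|v_\e\|_\infty$, so parabolic regularity gives uniform $C^{1,\alpha}$ estimates on $u_\e$ on compact subsets of $(0,\infty)\times\R$. For $v_\e$, note that outside $\{|y|\leq\e\}$ the equation reduces to the semilinear heat equation $\partial_t v_\e-d\Delta v_\e=f(v_\e)$, since $\mu_\e,\nu_\e$ are supported in $[-\e,\e]$ by (\ref{supportmunu}). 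Hence $v_\e$ is uniformly $C^{1,\alpha}$ on compacts of $(0,\infty)\times\R\times(\R\setminus\{0\})$. By Ascoli I extract a subsequence $\e_n\to 0$ along which $u_{\e_n}\to u_*$ locally uniformly and $v_{\e_n}\to v_*$ locally uniformly away from $y=0$, with $v_*$ admitting a priori two distinct traces at $y=0^\pm$.

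\emph{Trace analysis (main obstacle).} The heart of the proof is to identify the behaviour of $v_\e$ through the layer $|y|\leq\e$. Integrating the $v_\e$-equation over this strip gives, for any $R\geq 1$,
\begin{equation*}
\int_{-R\e}^{R\e}\bigl(\partial_t v_\e-d\partial_{xx}v_\e-f(v_\e)\bigr)dy-d\bigl[\partial_y v_\e(t,x,R\e)-\partial_y v_\e(t,x,-R\e)\bigr]=\mub u_\e(t,x)-\int\nu(z)\,v_\e(t,x,\e z)\,dz,
\end{equation*}
where the first integral is $O(\e)$. After rescaling $\tilde v_\e(t,x,z):=v_\e(t,x,\e z)$ the equation becomes $-d\partial_{zz}\tilde v_\e=\e\,\mu(z)\,u_\e-\e\,\nu(z)\,\tilde v_\e+O(\e^2)$, so the inner profile is, to leading order, affine in $z$ on all of $\R$, hence constant by boundedness; this forces $v_*(t,x,0^+)=v_*(t,x,0^-)$ and $\tilde v_\e(t,x,\cdot)\to v_*(t,x,0)$ uniformly on compact $z$-sets. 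Passing to the limit $\e\to 0$ in the strip identity then yields
\begin{equation*}
-d\bigl(\partial_y v_*(t,x,0^+)-\partial_y v_*(t,x,0^-)\bigr)=\mub u_*(t,x)-\nub v_*(t,x,0),
\end{equation*}
and, simultaneously, $\int\nu_\e(y) v_\e\,dy\to\nub v_*(t,x,0)$, which lets the $u_\e$-equation pass to the limit to the road equation of (\ref{BRReq2}). Rigorously, the affine inner asymptotics should be obtained from an energy estimate in the strip bounding $\int_{-R\e}^{R\e}|\partial_y v_\e|^2\,dy$, so that the oscillation of $v_\e$ across the layer is $o(1)$.

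\emph{Conclusion.} The pair $(u_*,v_*)$ thus satisfies (\ref{BRReq2}) with initial datum $(u_0,v_0)$, and uniqueness for this transmission problem, via the standard comparison principle, implies that the whole family $(u_\e,v_\e)$ converges locally uniformly to $(u,v)$. The upgrade to global $L^\infty$ convergence, uniform on compact time intervals in $(0,\infty)$, follows from uniform Gaussian-tail estimates at infinity for both systems, a consequence of comparison with the heat kernel together with the compact support of $(u_0,v_0)$.
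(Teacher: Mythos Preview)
Your compactness-and-identification strategy is a genuinely different route from the paper's, and it is viable. The paper proceeds quantitatively: it shows that $L_\e$ and $L_0$ are uniformly sectorial, derives explicit bounds on the difference of resolvents $R(\lambda,L_\e)-R(\lambda,L_0)$ via the Fourier transform in $x$ (splitting into large and small $|\lambda|$, then large and small frequencies $|\xi|$, with an almost explicit computation of the transformed resolvent in the layer), integrates these over a contour to control $e^{tL_\e}-e^{tL_0}$ with an explicit power of $\e$, and closes by Duhamel and a Gronwall argument for the nonlinearity. This delivers a rate of convergence (of order $\e^\alpha$ for some $\alpha>0$ on each compact time interval), which your soft argument cannot produce; it is also what the paper later relies on in the proof of Theorem~\ref{uniformspreading}.

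Conversely, your approach bypasses all of Section~\ref{sectionresolvent} and the semigroup machinery, trading them for parabolic interior regularity, compactness, and a distributional passage to the limit. Two points need tightening. First, the claim that $\int_{-R\e}^{R\e}(\partial_t v_\e-d\partial_{xx}v_\e-f(v_\e))\,dy=O(\e)$ pointwise in $(t,x)$ presupposes uniform bounds on $\partial_t v_\e$ and $\partial_{xx} v_\e$ \emph{inside} the layer, where the coefficients are of size $1/\e$ and standard parabolic estimates degenerate; the clean fix is to pass to the limit distributionally in the full $v_\e$-equation against test functions in $(t,x,y)$, obtaining $\partial_t v_*-d\Delta v_*=f(v_*)+(\mub u_*-\nub v_*(\cdot,0))\,\delta_{y=0}$ directly (your energy bound on $\int_{-R\e}^{R\e}|\partial_y v_\e|^2\,dy$ is exactly what justifies $\nu_\e v_\e\rightharpoonup\nub v_*(\cdot,0)\delta_0$), and then read off the transmission conditions by regularity on each side. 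Second, the spatial tails are exponential rather than Gaussian: the KPP reaction term precludes a raw heat-kernel comparison, and the correct barrier is the uniform linear travelling-wave supersolution of Lemma~\ref{decroissancesolutions}, which gives $|v_\e(t,x,y)|\le K e^{-\las(|x|-\cs t)}$ uniformly in $\e$ and hence suffices to upgrade local to global $L^\infty$ convergence on $[\tau,T]$.
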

Notice that the convergence is global in space, but local in time.

\paragraph{Bibliographical background}
Reaction-diffusion equations of the type 
$$
\partial_t u-d\Delta u=f(u)
$$
have been introduced in the celebrated articles of Fisher ~\cite{fisher} and Kolmogorov, Petrovsky and Piskounov ~\cite{KPP} in 1937.
The initial motivation came from population genetics. The reaction term are that of a logistic law, whose
archetype is $f(u)=u(1-u)$ for the simplest example. In their works in one dimension, 
Kolmogorov, Petrovsky and Piskounov revealed the existence of propagation waves, together with an asymptotic speed of spreading
of the dominating gene, given by $2\sqrt{df'(0)}$. The existence of an asymptotic speed of spreading was generalised in $\R^n$ 
by D. G. Aronson and H. F. Weinberger in ~\cite{AW} (1978). Since these pioneering works, front propagation in 
reaction-diffusion equations have been widely studied. Let us cite, for instance, the works of Freidlin and G\"artner \cite{FG}
for an extension to periodic media, or \cite{W2002}, \cite{BHN1} and \cite{BHN2} for more general domains.
An overview of the subject can be found in ~\cite{BHbook}.

New results on the model under study (\ref{BRReq2}) have been recently proved. Further effects like a drift or 
a killing term on the road have been investigated in \cite{BRR2}. The case of a fractional diffusion on the road was studied 
and explained by the three authors and A.-C. Coulon in \cite{BRRC} and \cite{these_AC}. See \cite{BRRC2} for a summary of the results on this model.
Models with an ignition-type nonlinearity
are also studied by L. Dietrich in \cite{Dietrich1} and \cite{Dietrich2}.

\paragraph{Organisation of the paper}

The second section of the paper is devoted to the convergence of the asymptotic
speed of propagation $c^*_\e$ with $\e$ goes to 0. For this, we will also
investigate some useful convergence result concerning travelling 
supersolutions. The third 
section deals with the convergence of the stationary solutions,
which will be helpful for the control of the long time behaviour 
for Theorem \ref{uniformspreading}. 
Theorem \ref{convsol} is proved in sections \ref{sectionresolvent} and \ref{sectionconvsol}, with an argument from geometric
 theory of parabolic equations. Section \ref{sectionresolvent}, which is the most technical, is devoted to resolvent bounds. 
 They are used in section \ref{sectionconvsol} to prove some convergence properties for the linear systems.
 Then, we use them in a Gronwall argument to deal with the nonlinearity.
 We prove 
Theorem \ref{uniformspreading} in the last section.


\paragraph{Acknowledgements} The research leading to these results has received 
funding from the European Research Council under the European Union’s 
Seventh Framework Programme (FP/2007-2013) / ERC Grant Agreement n.321186 - ReaDi -Reaction-Diffusion Equations, Propagation and Modelling.
I am grateful to Henri Berestycki and Jean-Michel Roquejoffre for suggesting me the
model and many fruitful conversations. I also would like to thank the anonymous referees for their helpful comments.

\section{Asymptotic spreading speed}\label{sectionvitesse}
Let $c^*_0$ be the asymptotic spreading speed associated with the above system (\ref{BRReq2}), and $c^*_\e$ the spreading speed given
by Theorem \ref{spreadingthm} associated with the system (\ref{RPeps}). Under our assumptions,
the main result of this section is
\begin{prop}
	\label{convergencec}
	$c^*_\e$ converges to $c^*_0$ as $\e$ goes to 0, locally uniformly in $d,D,\mub$.
\end{prop}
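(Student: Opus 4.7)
The plan is to characterise the spreading speeds through the standard KPP dispersion relation and then to perform a singular-limit analysis on it. Linearising both systems at $(0,0)$ and testing with exponential profiles $u=e^{-\la(x-ct)}$, $v=e^{-\la(x-ct)}\phi_\la(y)$ reduces the problem, for each $\la>0$, to the coupled eigenvalue problem
\begin{equation*}
-d\phi_\la''+\bigl(c\la-d\la^2-\fp+\nu_\e(y)\bigr)\phi_\la=\mu_\e(y), \qquad c\la-D\la^2+\mub=\int \nu_\e \phi_\la\,dy,
\end{equation*}
whose minimal admissible $c$ defines $c_\e(\la)$; similarly $c_0(\la)$ is defined through the limit problem on $\R\setminus\{0\}$ with the Robin jump conditions at $y=0$ inherited from (\ref{BRReq2}). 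By the KPP theory which underlies both Theorem~\ref{spreadingthm} and its analogue in \cite{BRR1}, $c^*_\e=\inf_{\la>0}c_\e(\la)$ and $c^*_0=\inf_{\la>0}c_0(\la)$.

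The first step is to prove $c_\e(\la)\to c_0(\la)$ pointwise. For $\la$ fixed and $c$ in a bounded range, the potential $c\la-d\la^2-\fp+\nu_\e$ is uniformly coercive at infinity in $y$, providing uniform $L^\infty$ and $H^1_{loc}$ bounds on $\phi_\la^\e$. Extracting a weak limit and passing to the distributional limits $\nu_\e\phi_\la^\e\rightharpoonup \phi_\la^0(0)\d$ and $\mu_\e\rightharpoonup\mub\d$ identifies the limit with $\phi_\la^0$, recovering the Robin jump conditions by integrating across a shrinking neighbourhood of $y=0$. Matching with the road equation then delivers the limit dispersion relation and hence $c_\e(\la)\to c_0(\la)$.

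The second step upgrades pointwise convergence to convergence of the infima. I would prove that $c_\e(\la)\to c_0(\la)$ locally uniformly on $(0,\infty)$ and that $c_\e(\la)\to+\infty$ uniformly in small $\e$ as $\la\to 0^+$ and as $\la\to+\infty$, using elementary asymptotics on the dispersion relation (the road part forces $c\la\geq -\mub+D\la^2+\fp/\la$ at small $\la$, the field part forces $c\geq d\la+\fp/\la$ at large $\la$). This localises the two infima on a common compact subinterval and yields $c^*_\e\to c^*_0$. Local uniformity in $d, D, \mub$ is then automatic, because the entire singular-limit argument depends continuously on these coefficients within any compact range.

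The main obstacle will be the singular limit of the $y$-ODE: the exchange terms concentrate on a set of vanishing measure, so the usual elliptic compactness degenerates in a layer of width $\e$ around $y=0$. One must control $\phi_\la^\e$ in that layer with enough precision to identify the limiting distributions $\nu_\e\phi_\la^\e$ and $\mu_\e$, which is precisely what generates the jump conditions of (\ref{BRReq2}) in the limit. Once that matching is in place, the remaining monotonicity, continuity and compactness arguments producing the infimum convergence are routine.
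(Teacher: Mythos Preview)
Your strategy---reduce to the $y$-ODE for $\phi_\la^\e$, pass to the singular limit, then deduce convergence of the minimal speed---is the paper's strategy. The paper phrases it as locally uniform convergence of the curve $\Gamma_2^\e$ (graph of $\la\mapsto\int\nu_\e\phi_\la^\e$) to $\Gamma_2^0$, and then reads off $c^*_\e\to c^*_0$ from the first tangency of $\Gamma_1$ with $\Gamma_2^\e$; your ``$\inf_{\la}c_\e(\la)$'' picture is a reparametrisation of the same thing.

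The gap is in your first step. You assert that ``the potential $c\la-d\la^2-\fp+\nu_\e$ is uniformly coercive at infinity in $y$, providing uniform $L^\infty$ and $H^1_{loc}$ bounds''. This is precisely the crux and it is not automatic: the potential tends to the positive constant $P(\la)=c\la-d\la^2-\fp$, not to $+\infty$, while the source satisfies $\|\mu_\e\|_{L^\infty}=O(\e^{-1})$. A naive maximum-principle or elliptic estimate therefore yields only $\|\phi_\la^\e\|_\infty=O(\e^{-1})$, which is useless for identifying the limit $\nu_\e\phi_\la^\e\rightharpoonup\phi_\la^0(0)\d_0$. The paper devotes Lemma~\ref{borneuniformephi} to exactly this point: it rescales $\psi(\xi)=\phi(\e\xi)$ so that the equation on $(-1,1)$ has $O(\e)$ coefficients, applies Harnack to dominate $\sup\psi$ by the exterior exponential amplitude $K(\e,c,\la)$, and then rules out $K\to\infty$ by integrating the ODE and reaching a contradiction. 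A shorter alternative is the energy identity $d\|\phi'\|_2^2+P(\la)\|\phi\|_2^2+\int\nu_\e\phi^2=\int\mu_\e\phi\leq\mub\|\phi\|_\infty$ combined with the one-dimensional Sobolev inequality $\|\phi\|_\infty^2\leq 2\|\phi\|_2\|\phi'\|_2$, which closes to $\|\phi\|_\infty\leq\mub/\min(d,P(\la))$. Either way you must supply such an argument; your proposal does not.

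Once the uniform $C^1$ bound is secured, your Arzel\`a--Ascoli and distributional-limit steps are exactly what the paper does (Lemma~\ref{convergencephi} and the paragraph following it). For your second step, the asymptotic claim ``$c\la\geq-\mub+D\la^2+\fp/\la$'' is not correct as written and would need repair; the paper sidesteps this by using the a priori geometry of the curves (strict concavity of $\la\mapsto\Psi_2^\e$, monotonicity of $\Psi_1^0$ in $c$) established in \cite{BRR1,Pauthier}, which already confines the tangency to a compact $(c,\la)$-region uniformly in $\e$.
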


For the sake of simplicity,
we will consider that $\nu$ is an even function. The general case is similar but heavier.

\subsection{Prerequisite - linear travelling waves and speed of propagation}
All the results given in this subsection have been investigated in \cite{BRR1} by H. Berestycki, J.-M. Roquejoffre, and L. Rossi 
for the local system or in \cite{Pauthier} 
by the author for the nonlocal.
Our purpose is to recall the derivation of the asymptotic speed of spreading for both systems. 
Because $f$ is of KPP-type (that is, satisfies $f(v)\leq\fp v$ for nonnegative $v$),
we are interested in the linearised systems:
\begin{equation}
\label{RPlieps}
\begin{cases}
\partial_t u-D \partial_{xx} u = -\mub u+\int \nu_\e(y)v(t,x,y)dy & x \in \R,\ t>0 \\
  \partial_t v-d\Delta v = f'(0)v +\mu_\e(y)u(t,x)-\nu_\e(y)v(t,x,y) & (x,y)\in \R^2,\ t>0
\end{cases}
\end{equation}
for the nonlocal case, and
\begin{equation}
\label{BRRli}
\begin{cases}
\partial_t u-D \partial_{xx} u= v(t,x,0)-\mub u & x\in\R,\ t>0\\
\partial_t v-d\Delta v=f'(0)v & (x,y)\in\R\times\R^*,\ t>0\\
v(t,x,0^+)=v(t,x,0^-), & x\in\R,\ t>0 \\
-d\left\{ \partial_y v(t,x,0^+)-\partial_y v(t,x,0^-) \right\}=\mub u(t,x)-\nub v(t,x,0) & x\in\R,\ t>0
\end{cases}
\end{equation}
for the local one. This motivates the following definition.
\begin{definition}\label{deftravelling}
 For any of the two systems (\ref{RPlieps})-(\ref{BRRli}), we call a \textbf{linear travelling wave} a 3-tuple
 $(c,\la,\phi)$ with $c>0,$ $\la>0,$ and $\phi\in H^1(\R)$ a positive function such that
 $$
 \begin{pmatrix}
  u(t,x) \\ v(t,x,y)
 \end{pmatrix}
= e^{-\la(x-ct)} \begin{pmatrix}
                        1 \\ \phi(y)
                       \end{pmatrix}
$$
 be a solution of the corresponding linearised system (\ref{RPlieps}) or (\ref{BRRli}). The quantity $c$ is the speed of the exponential travelling wave.
\end{definition}

\begin{remark}\label{remark1}
 From the KPP assumption (\ref{kppassumtions}) on $f$, a linear travelling wave for the linearised system (\ref{RPlieps}) (resp. (\ref{BRRli}))
 provides a supersolution for the nonlinear system (\ref{RPeps}) (resp. (\ref{BRReq2})). This will be a powerful tool 
 to get $L^\infty$ and decay estimates in the sequel.
\end{remark}

The previous definition for travelling waves provides us a helpful characterisation for spreading speed.

\begin{prop}\label{defspreadingspeed}\cite{BRR1,Pauthier}
For any of the systems (\ref{RPeps})-(\ref{BRReq2}), for all $\e>0,$ 
the \textbf{spreading speed $c^*=c^*_0$} or $c^*_\e$ given by Theorem \ref{spreadingthm}
can be defined as follows:
$$
c^*=\inf\{c>0 | \text{ a linear travelling wave with speed $c$ exists}\}.
$$
\begin{enumerate}
 \item If $D\leq 2d,$ then $c^*_0=c^*_\e=c_{KPP}=2\sqrt{d\fp}.$
 \item If $D>2d,$ then $c^*_0,c^*_\e>c_{KPP}$ and the infimum is reached by a linear travelling wave, denoted $(c_0^*,\la_0^*,\phi^*)$
 or $(c_\e^*,\la_\e^*,\phi^*_\e).$
\end{enumerate}
\end{prop}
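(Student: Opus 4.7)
The plan is to substitute the exponential ansatz of Definition \ref{deftravelling} into the linearised systems, reduce the existence question to a linear ODE in $y$ for the profile $\phi$, and minimise the resulting dispersion relation over $\la > 0$. For the nonlocal system (\ref{RPlieps}), plugging $u = e^{-\la(x-ct)}$ and $v = e^{-\la(x-ct)}\phi(y)$ yields the scalar relation $\mub + c\la - D\la^2 = \int \nu_\e(y)\phi(y)\,dy$ together with the ODE $-d\phi'' + (c\la - d\la^2 - \fp + \nu_\e(y))\phi = \mu_\e(y)$. A positive $H^1$ solution requires $c\la - d\la^2 - \fp > 0$, equivalently $c > d\la + \fp/\la$; under this constraint the ODE has a unique $H^1$ solution, positive by the maximum principle since $\mu_\e \geq 0$ and the zero-order coefficient is positive at infinity. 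Compatibility with the scalar relation implicitly defines a curve $c = c_\e(\la)$, and one sets $c^*_\e := \inf_{\la>0} c_\e(\la)$. For the local system (\ref{BRRli}) the same substitution gives $\phi(y) = \phi(0)e^{-\gamma|y|}$ with $\gamma = \sqrt{(c\la - d\la^2 - \fp)/d}$, and the jump condition at $y=0$ combined with the $u$-equation produces an explicit dispersion relation yielding $c^*_0$.

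Next, I would show that the $c^*$ so defined coincides with the spreading speed of Theorem \ref{spreadingthm}. For the upper bound, Remark \ref{remark1} provides that every linear travelling wave is a supersolution of the nonlinear problem; after translating it to dominate the compactly supported initial datum, one obtains $\sup_{|x|\geq ct}(u,v)\to 0$ for all $c>c^*$. The matching lower bound is the genuinely PDE-analytic step: following the Aronson--Weinberger strategy one builds a compactly supported subsolution on a large ball, iterates translates under the linearised semigroup, and transfers the estimate to the full nonlinear system via the KPP hypothesis (\ref{kppassumtions}).

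The comparison with $c_{KPP} = 2\sqrt{d\fp}$ is then algebraic. Since every admissible speed already exceeds $d\la + \fp/\la$, minimising this lower envelope at $\la_{KPP} = \sqrt{\fp/d}$ forces $c^* \geq c_{KPP}$. When $D \leq 2d$, direct substitution shows the dispersion relation is solvable in the degenerate limit at $(c_{KPP}, \la_{KPP})$, so equality holds and no strict interior minimiser in $\la$ is selected. When $D > 2d$, the same substitution becomes incompatible at $\la_{KPP}$, so the infimum is strictly greater than $c_{KPP}$; a convexity/coercivity check on $\la \mapsto c_\e(\la)$ then shows it is attained at an interior $\la^*_\e > 0$, providing the asserted minimising profile. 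The main obstacle of the whole scheme is the lower bound on spreading in the previous paragraph: this is the technical core of \cite{BRR1, Pauthier}, whereas the dispersion-relation computation and the $c_{KPP}$-comparison are essentially routine once the ODE step is in place.
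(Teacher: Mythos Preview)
Your outline matches the paper's in substance: both plug the exponential ansatz into the linearised systems, obtain the scalar constraint $\mub + c\la - D\la^2 = \int\nu_\e\phi$ (or $=\phi(0)$ in the local case) together with the second-order ODE for $\phi$, note that a positive $H^1$ profile forces $P(\la):=c\la-d\la^2-\fp>0$, and then minimise over the resulting admissible region. The paper, following \cite{BRR1,Pauthier}, organises this a little differently: rather than writing an implicit function $c=c_\e(\la)$ and minimising, it fixes $c$ and studies the intersection in the $(\la,\Psi)$-plane of the two curves $\Gamma_1:\la\mapsto -D\la^2+c\la+\mub$ and $\Gamma_2^\e:\la\mapsto\int\nu_\e\phi(\cdot;\e,c,\la)$, defining $c^*$ as the first $c$ for which these curves touch. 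The key structural input there is the strict concavity of $\la\mapsto\Psi_2^\e(c,\la)$ together with the monotonicity of $\Psi_1^0$ and $\la_2^-$ in $c$; this plays the role of your ``convexity/coercivity check'' and is what guarantees, when $D>2d$, that the first contact is a genuine tangency at an interior point $(\la^*,\Psi^*)$. The two viewpoints are equivalent, but the $\Gamma_1/\Gamma_2$ picture is the one the paper actually relies on later (Figure~\ref{2graphs}, Lemma~\ref{convergencephi}, etc.), so it is worth being aware of.

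One small correction in your $D\leq 2d$ discussion: at $(c_{KPP},\la_{KPP})$ one has $P(\la_{KPP})=0$, so the ODE has \emph{no} $H^1$ solution and there is no linear travelling wave in the sense of Definition~\ref{deftravelling} at the endpoint. The statement of the proposition reflects this: the infimum is asserted to be attained only when $D>2d$. The right argument for $D\leq 2d$ is that for every $c>c_{KPP}$ the curves $\Gamma_1$ and $\Gamma_2$ do intersect (so $c^*\leq c_{KPP}$), while your envelope bound $c>d\la+\fp/\la\geq c_{KPP}$ gives $c^*\geq c_{KPP}$. Your identification of the lower spreading bound as the genuinely hard step, deferred to \cite{BRR1,Pauthier}, is exactly right.
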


Proposition \ref{defspreadingspeed} provides the construction of $c^*$ thanks to a nonlinear eigenvalue
problem. We give an outline of a proof in both cases to make the sequel easier to read. We focus only on the case 
$D>2d,$ the other being trivial.

\paragraph{Resolution for the local case}
Inserting the definition supplied by Proposition \ref{defspreadingspeed} into (\ref{BRRli}), we obtain the following system
in $(c,\la,\phi):$
\begin{equation}\label{clambdaphi0}
 \begin{cases}
-D\la^2+\la c+\mub=\phi(0) \\
-d\phi''(y)+\lp\la c-d\la^2-\fp\rp\phi(y)=0 \qquad y\in\R^*\\
\phi(0^+)=\phi(0^-),\ \phi\geq0,\ \phi\in H^1(\R), \\
 -d\lp\phi'(0^+)-\phi'(0^-)\rp=\mub-\phi(0).
 \end{cases}
\end{equation}
From now, we set 
$$
P(\la)=\la c-d\la^2-\fp,\ \la_2^\pm(c)=\frac{c\pm\sqrt{c^2-c_{KPP}^2}}{2d}
$$
and
$$
\MD=\left\{ (c,\la),\ c>c_{KPP} \textrm{ and } \la\in(\la_2^-(c),\la_2^+(c))\right\}.
$$
Hence, the existence of a linear travelling wave is equivalent to the following system in $(c,\la,\phi(0))$
provided that $c>c_{KPP}$ and $\la\in[\la_2^-(c),\la_2^+(c)]$:
\begin{equation}\label{systemelaphilocal}
 \begin{cases}
 -D\la^2+\la c+\mub=\phi(0) \\
 \phi(0)=\frac{\mub}{1+2\sqrt{dP(\la)}}.
 \end{cases}
\end{equation}
The first equation of (\ref{systemelaphilocal}) gives the graph of a function
\begin{equation}\label{gamma1}
 \la\mapsto\Psi_1^0(c,\la):=-D\la^2+c\la+\mub
\end{equation}
which is intended to be equal to $\phi(0),$ 
provided $(c,\la,\phi)$ defines a linear travelling wave. Let us denote $\Gamma_1$ its graph, depending on $c,$ 
in the $(\la,\Psi_1^0(\la))-$plane.

The second equation of (\ref{systemelaphilocal}) gives the graph of a function
\begin{equation*}
\Psi_2^0 : \left\{ \begin{array}{ccl}
\overline{\MD} & \longrightarrow & \R \\
(c,\la) & \longmapsto &\frac{\mub}{1+2\sqrt{dP(\la)}}.
\end{array} \right.
\end{equation*}
Let us denote $\Gamma_2^0$ its graph in the same plane, still depending on $c.$
Hence, (\ref{systemelaphilocal}) amounts to looking for the first $c\geq c_{KPP}$ such that the two graphs
$\Gamma_1$ and $\Gamma_2^0$ intersect. 

It was shown that there exists $(c^*_0,\la^*_0)\in\MD$ and an exponential function 
$$\phi^* : y \mapsto \phi^*(0)e^{-\sqrt{P(\la_0^*)}|y|}$$
such that $(c^*_0,\la^*_0,\phi^*)$ defines a linear travelling wave for (\ref{BRRli}), and $c^*_0$ is
the first $c$ such that (\ref{clambdaphi0}) admits a solution. Hence, $c^*_0$ is the speed defined by Proposition \ref{defspreadingspeed}.

\paragraph{Resolution for the nonlocal case}
In this case, Proposition \ref{defspreadingspeed} yields the following system in $(c,\la,\phi).$
\begin{equation}\label{clambdaphieps}
 \begin{cases}
    -D\la^2+c\la+\mub = \int_\R \nu_\e(y)\phi(y)dy \\
    -d\phi''(y)+\lp c\la-d\la^2-\fp+\nu_\e(y)\rp\phi(y) = \mu_\e(y),\ \phi\in H^1(\R).
 \end{cases}
\end{equation}
Once again, the first equation of (\ref{clambdaphieps}) gives the function $\Psi_1^0$ defined by (\ref{gamma1}), which is intended to be equal to $\int \nu_\e\phi$
provided $(c,\la,\phi)$ defines a linear travelling wave for (\ref{RPlieps}). The second equation of (\ref{clambdaphieps}) defines implicitly the
function
\begin{equation*}
\Psi_2^\e : \left\{ \begin{array}{ccl}
\MD & \longrightarrow & \R \\
(c,\la) & \longmapsto & \int \nu_\e(y)\phi(y;\e,c,\la)dy
\end{array} \right.
\end{equation*}
where $\phi(.;\e,c,\la)$ is the unique solution in $H^1(R)$ of
\begin{equation}
\label{gamma2e}
-d\phi''(y)+(\la c-d\la^2-f'(0)+\nu_\e(y))\phi(y)  =  \mu_\e(y),\qquad y\in\R.
\end{equation}
For fixed $c,$ we denote $\Gamma_2^\e$ the graph of $\Psi_2^\e$ in the $(\la,\Psi_2^\e(\la))-$plane.
$\Psi_2^\e$ is smooth in $\MD$ and can be continuously extended in $\overline{\MD}.$ It has also been proved that
$\phi,$ hence $\Psi_2^\e$ is decreasing in $c.$
It was shown that for all $\e>0,$ there exists $(c^*_\e,\la^*_\e)\in\MD$ and $\phi^*_\e=\phi(.;\e,c^*_\e,\la^*_\e)$ solution
of (\ref{gamma2e}) such that $(c^*_\e,\la_\e^*)$ $\Gamma_1^0$ and $\Gamma_2^\e$ intersect, and $c_\e^*$ is the first $c$ such that it occurs.
The main ingredients of this proof are:
\begin{itemize}
 \item the functions $c\mapsto\Psi_1^0(c,\la)$ and 
$c\mapsto\la_2^-(c)$ are respectively increasing and decreasing;
\item the function $\la\mapsto\Psi_2^\e(c,\la)$ is strictly concave.
\end{itemize}
These behaviours summed up in Figure \ref{2graphs}.
\begin{figure}[!ht]
   \centering
   \psset{unit=0.6}
\def\mub{\overline{\mu}}

\begin{pspicture}(-3,-1)(13,9)
\psline{->}(-2,0)(12.7,0)
\psline{->}(0,-1)(0,8.5)
\psparabola[linewidth=0.1,linecolor=red](-2,0)(2,8)
\psellipticarc[linewidth=0.1,linecolor=red](10,6)(2.5,3.5){180}{360}

\psline[linestyle=dashed](0,6)(12.5,6)
\uput{0.2}[180](0,6){$\mub$}
\uput{0.2}[0](12.7,0){$\lambda$}
\uput{0.2}[-90](7.5,0){$\lambda_2^-(c)$}
\uput{0.2}[-90](12.5,0){$\lambda_2^+(c)$}
\psline[linestyle=dashed](2,8)(2,0)
\psline[linestyle=dashed](4,6)(4,0)
\psline[linestyle=dashed](7.5,6)(7.5,0)
\psline[linestyle=dashed](10,2.5)(10,0)
\psline[linestyle=dashed](12.5,6)(12.5,0)
\uput{0.2}[-90](10,0){$\frac{c}{2d}$}
\uput{0.2}[-90](2,0){$\frac{c}{2D}$}
\uput{0.2}[-90](4,0){$\frac{c}{D}$}
\uput{0.2}[90](10,2.5){$\Gamma_2^\e$}
\uput{0.2}[90](2,8){$\Gamma_1$}
\uput{0.2}[180](0,8.5){$\Psi_{1,2}$}
\psline[linewidth=0.1]{->}(4.2,5)(5.2,5)
\psline[linewidth=0.1]{->}(8.6,3.5)(7.6,3.5)
\psline[linewidth=0.1]{->}(11.4,3.5)(12.4,3.5)
\end{pspicture}
   \caption{\label{2graphs}representation of $\Gamma_1$ and $\Gamma_2^\e,$ behaviours as $c$ increases}
\end{figure}
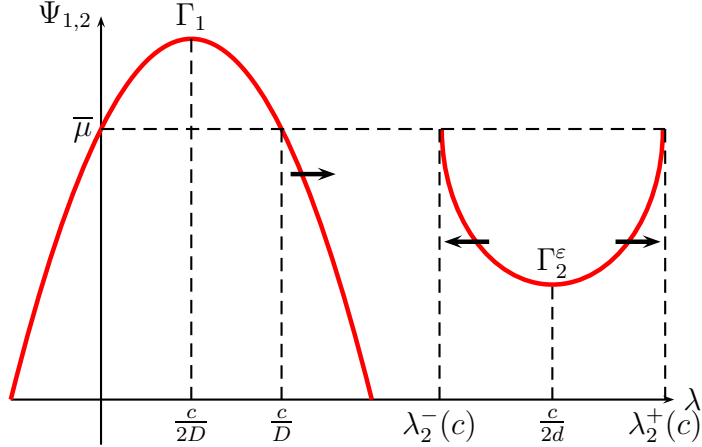
The main purpose of the rest of this section will be to show that the curve $\Gamma_2^\e$ converges locally uniformly in $(c,\la)$
to $\Gamma_2^0$ as $\e$ goes to 0.

\subsection{Convergence of the spreading speed}

\paragraph{Uniform boundedness of $\phi$ in $C^1(\R)$}
\begin{lem}\label{borneuniformephi}
Let us consider $\phi:=\phi(y;\e,c,\la)$ the unique solution of (\ref{gamma2e}) for $\e>0$ 
and $(c,\la)$ in $\MD,$ that is to say $c>c_{KPP}$ and $\la$ in $]\la_2^-(c),\la_2^+(c)[.$
Then the family $\lp\|\phi\|_{L^\infty(y)}+\|\phi'\|_{L^\infty(y)}\rp_\e$ is uniformly bounded in $\e>0$ and every compact set on 
$\overline{\MD}.$
\end{lem}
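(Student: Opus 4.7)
The plan is to combine a maximum-principle argument with a Green's function comparison for the $L^\infty$ estimate, then exploit the explicit exponential form of $\phi$ outside the supports of $\mu_\e,\nu_\e$ to derive a $C^1$ bound. Throughout, I set $P(\la)=\la c-d\la^2-\fp$ and fix a compact set $K\subset\MD$; on $K$ both $(c,\la)$ and $P(\la)$ are bounded, and $P(\la)\geq P_\star$ for some $P_\star>0$.

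I would first establish $\phi\geq 0$ by the strong maximum principle applied to
$-d\phi''+(P(\la)+\nu_\e)\phi=\mu_\e$: since $\phi\in H^1(\R)$ decays at infinity, a negative global minimum would have to be attained at some $y_0$, where $\phi''(y_0)\geq 0$ and $(P(\la)+\nu_\e(y_0))\phi(y_0)<0$, contradicting $\mu_\e(y_0)\geq 0$. With positivity in hand, the $L^\infty$ bound is obtained by comparison. Introduce $\psi\in H^1(\R)$ solving $-d\psi''+P(\la)\psi=\mu_\e$; explicitly,
$$
\psi(y)=\int_\R\frac{1}{2\sqrt{dP(\la)}}\,e^{-\sqrt{P(\la)/d}\,|y-z|}\mu_\e(z)\,dz,
\qquad
\|\psi\|_{L^\infty}\leq\frac{\mub}{2\sqrt{dP(\la)}}.
$$
The difference satisfies $-d(\psi-\phi)''+P(\la)(\psi-\phi)=\nu_\e\phi\geq 0$ and vanishes at infinity, so $\phi\leq\psi$ by the maximum principle, yielding the uniform bound $\|\phi\|_{L^\infty(\R)}\leq\mub/(2\sqrt{dP_\star})$ on $K$.

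For the derivative, I would split at $y=\pm\e$, using the support assumption (\ref{supportmunu}) and restricting to $\e<1$ so that $\mu_\e,\nu_\e$ vanish off $[-\e,\e]$. On $|y|\geq\e$, $\phi$ solves $-d\phi''+P(\la)\phi=0$ in $H^1$, hence is a pure decaying exponential, and $|\phi'(y)|=\sqrt{P(\la)/d}\,|\phi(y)|\leq\sqrt{P(\la)/d}\,\|\phi\|_{L^\infty}$. On $[-\e,\e]$, integrating the ODE starting from $-\e$ gives
$$
\phi'(y)=\phi'(-\e)+\frac{1}{d}\int_{-\e}^{y}\bigl[(P(\la)+\nu_\e(z))\phi(z)-\mu_\e(z)\bigr]\,dz,
$$
so that
$$
|\phi'(y)|\leq|\phi'(-\e)|+\frac{1}{d}\Bigl[(2\e P(\la)+\nub)\|\phi\|_{L^\infty}+\mub\Bigr].
$$
Combined with the outer bound for $|\phi'(-\e)|$ and the already-established $L^\infty$ estimate, and using $\e<1$, this gives a uniform $C^1$ bound on $K$.

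The delicate step is precisely this interior estimate, since on $[-\e,\e]$ both $\nu_\e$ and $\mu_\e$ are pointwise of size $1/\e$, so any attempt based on $\|\nu_\e\|_{L^\infty}$ or $\|\mu_\e\|_{L^\infty}$ blows up. What rescues the argument is that only the $L^1$ masses $\nub=1$ and $\mub$, which are fixed, enter the integrated identity.
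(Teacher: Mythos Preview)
Your argument is correct and considerably more direct than the paper's on compact subsets of the \emph{open} region $\MD$. The paper proceeds by first rescaling $y\mapsto y/\e$ and invoking the Harnack inequality to reduce the $L^\infty$ bound to controlling the constant $K(\e,c,\la)$ in the exterior formula $\phi(y)=Ke^{-\sqrt{P(\la)/d}\,|y|}$ for $|y|>\e$; it then bounds $K$ by a contradiction argument (normalising by $K_n\to\infty$ and testing the integrated equation at $y=1$). Your Green's-function comparison with $\psi$ solving $-d\psi''+P(\la)\psi=\mu_\e$ bypasses both the Harnack step and the contradiction, and delivers the explicit bound $\|\phi\|_\infty\le\mub/(2\sqrt{dP(\la)})$ in one stroke. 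The $C^1$ estimates are handled similarly in both proofs, by integrating the equation and using only the $L^1$ masses $\mub,\nub$ rather than the $L^\infty$ norms of $\mu_\e,\nu_\e$.

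There is, however, one point where your argument falls short of the lemma as stated: the bound $\mub/(2\sqrt{dP_\star})$ requires $P(\la)\ge P_\star>0$, which holds on compacts of $\MD$ but fails on $\partial\MD$ where $P(\la)=0$. The lemma asks for uniform control on compacts of $\overline{\MD}$. The paper's contradiction argument is set up to survive the degeneration $P(\la_n)\to 0$: the normalised $\tilde\phi_n$ remain bounded by Harnack, and the key term $\int\nu_{\e_n}\tilde\phi_n\to 1$ regardless of $P(\la_n)$, forcing the contradiction. Your comparison discards precisely the term $\nu_\e\phi$ that keeps $\phi$ bounded near $\partial\MD$ (compare the limit formula $\phi_0(0)=\mub/(1+2\sqrt{dP(\la)})\to\mub$ as $P(\la)\to 0$). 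For the application to the convergence of the spreading speed this gap is harmless, since $(c_\e^*,\la_\e^*)$ lies strictly inside $\MD$; but to match the full statement you would need to retain some contribution of $\nu_\e$, or supplement your estimate with a separate argument near the boundary.
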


\begin{proof}
We consider $\phi=\phi(y;\e,\la,c)$ defined for $\e>0$, $(c,\la)$ in $\MD,$ and $y\in\R$. We know
that $\phi$ can be continuously extended to $\overline{\MD}.$
Moreover, from the elliptic maximum principle, it is easy to see that $\phi>0$ on $\R,$ for all admissible parameters.
Considering the hypotheses (\ref{supportmunu}) on $\mu, \nu$, we get that: 
\begin{itemize}
\item $\forall \e>0,\ supp(\nu_\e,\mu_\e)\subset(-\e,\e)$ ;
\item $y\mapsto \phi(y)$ is even.
\end{itemize}
Hence there exists $K=K(\e,c,\la)$ such that 
\begin{equation}\label{referee2}
 \forall |y|>\e,\ \phi(y;\e,c,\la)=K(\e,c,\la)e^{-\sqrt{\frac{P(\la)}{d}}|y|}.
\end{equation}

\textit{Step 1}. It is enough to bound $K$ to get the uniform boundedness of $\|\phi(\e,c,\la)\|_{L^{\infty}(\R)}$.
Indeed, using the scaling $\xi=\frac{y}{\e}$, with $\psi(\xi)=\phi(y)$, the function $\psi$ satisfies
\begin{equation}
\label{eqrescal1}
\begin{cases}
-d\psi''(\xi)+(\e^2 P(\la)+\e\nu(\xi))\psi(\xi) =  \e\mu(\xi) \\
\psi(\pm 1)=Ke^{-\sqrt{\frac{P(\la)}{d}}}.
\end{cases}
\end{equation}
Now, let us recall that $\mu,\nu$ are continuous and compactly supported. Then, 
by the Harnack inequality (theorem 8.17 and 8.18 in ~\cite{GT} for instance), there
exist $C_1,C_2\geq 0$, independent of $\e,c,\la$, such that
$$
\underset{[-1,1]}{\sup} \psi \leq C_1(\underset{[-1,1]}{\inf}\psi + C_2),
$$
which gives immediately
$$
\underset{\R}{\sup}\ \phi \leq C_1(K(\e,c,\la) + C_2).
$$

\textit{Step 2}. Let us prove a uniform bound for $K$. Set $c_1\in]c_{KPP},+\infty[$, and assume by contradiction that
\begin{equation}\label{absurde}
 \lim_{\e\to0}\sup \left\{ K(\e,c,\la),\ (c,\la)\in\overline{\MD},c\leq c_1\right\}=+\infty.
\end{equation}
That is, there exist $(\e_n)_n,(\la_n)_n,(c_n)_n$ with $\e_n\to 0$ and $c\leq c_1,$ $\la_2^-(c_1)\leq\la\leq\la_2^+(c_1)$
such that 
$$
K(\e_n,\la_n,c_n):=K_n\underset{n\to \infty}{\longrightarrow} +\infty.
$$
Set $$\phit_n = \frac{\phi(.;\e_n,\la_n,c_n)}{K_n}.$$ 
The function $\phit_n$ satisfies
\begin{equation}
\label{phitilde}
\begin{cases}
-d\phit_n''+(P(\la_n)+\nu_{\e_n})\phit_n = \frac{\mu_{\e_n}}{K_n} \ y\in\R \\
\phit_n(y) = e^{-\sqrt{\frac{P(\la_n)}{d}}|y|},\ \forall |y|>\e_n.
\end{cases}
\end{equation}
Again by the Harnack inequality, $(\|\phit_n\|_\infty)_n$ is bounded, and $\phit_n$ is positive by the elliptic maximum principle. 
Integrating (\ref{phitilde}) between $-\infty$ and $y$ gives 
$$
d\phit_n'(y) = P(\la_n)\int_{-\infty}^y\phit_n+\int_{-\infty}^y \nu_{\e_n} \phit_n-\frac{1}{K_n}\int_{-\infty}^y \mu_{\e_n},
$$
so
\begin{align*}
 d\|\phit_n'\|_\infty  & \leq P(\la_n)\int_\R e^{-\sqrt{\frac{P(\la_n)}{d}}|y|}dy+P(\la_n)\int_{-\e}^{+\e}\phit_n(y)dy+
 \|\phit_n\|_\infty+\frac{\mub}{K_n} \\
  & \leq 2\sqrt{dP(\la_n)}+\|\phit_n\|_\infty\lp1+2\e P(\la_n)\rp+\frac{\mub}{K_n}.
\end{align*}
Hence $(\phit_n)_n$ is uniformly Lipschitz.
Specializing to $y=1$, we get: 
\begin{equation}\label{inegalite}
-d\phit_n'(1) = \frac{\mub}{K_n}-P(\la_n)\int_{-\infty}^1\phit_n-\int_{-\infty}^1 \nu_{\e_n} \phit_n.
\end{equation}

But on the other hand, we have:
\begin{itemize}
\item $-d\phit_n'(1) = \sqrt{dP(\la_n)} e^{-\sqrt{\frac{P(\la_n)}{d}}}\geq 0$ from (\ref{referee2}) ;
\item $\frac{\mub}{K_n}\to 0$ as $n\to\infty$ by assumption (\ref{absurde}) ; 
\item $-P(\la_n)\int_{-\infty}^1\phit_n\leq 0$ ; 
\item $-\int_{-\infty}^1 \nu_{\e_n} \phit_n\to -1$ as $n\to \infty$. Indeed, the sequence $\nu_{\e_n}$ tends to the Dirac measure, we have
$\phit_n(\pm\e_n)=1+O(\e_n)$ from (\ref{phitilde}), and $(\phit_n)_n$
is uniformly Lipschitz.
\end{itemize}
For $n\to\infty$, this contradicts (\ref{inegalite}) since the left term is nonnegative and the right term tends  to a negative limit.
Hence, there is a contradiction. That is, $K(\e,c,\la)$ is bounded for $(c,\la)\in\overline{\MD}$ satisfying $c\leq c_1.$
Recall that $c\mapsto \phi$ is nonincreasing,
and $\phi$ is uniformly bounded as $\e\to 0$ in $(c,\la)\in\overline{\MD}.$

\textit{Step 3}. Boundedness of $\|\phi'\|_\infty$ with $\e\to 0$. We integrate (\ref{gamma2e}) from $-\infty$ to $y$ which 
gives: 
$$
d\phi'(y) = P(\la)\int_{_\infty}^y \phi + \int_{_\infty}^y \nu_\e \phi - \int_{_\infty}^y \mu_\e.
$$
Now, the explicit formula for $\phi$ and its uniform boundedness obtained in step 2 yields: 
$$
d\|\phi'\|_{\infty} \leq \|\phi\|_{\infty}\lp 1+2\sqrt{dP(\la)}+2\e\rp+\mub
$$
and the family $(\|\phi(\e,c,\la)\|_{\infty})$ is equicontinuous for all $\e$ close enough to $0$
and for all compact set in $\overline{\MD}.$ This implies the uniform boundedness of $\phi$ in $C^1(\R)$.
\end{proof}

\paragraph{Convergence of $\phi(.;\e)$ with $\e\to 0$, continuity of $\Psi_2^{\e}$.}
From Lemma \ref{borneuniformephi} the set $\left(\phi(.;\e) \right)_\e$ is included in $C_b(\R)$ and equicontinuous, locally uniformly in $c,\la$.
The Arzel\`a-Ascoli theorem (combined with Cantor's diagonal argument) yields the existence of a sequence $(\e_n)_n \subset \R,\ \e_n\to 0$ and
a function $\phi_0\in C_b(\R)$ such that $\phi(\e_n)\underset{n\to \infty}{\longrightarrow}\phi_0$ uniformly on compact sets.
Passing to the limit in (\ref{gamma2e}), we obtain that $\phi_0$ satisfies 
\begin{equation}\label{eqlimite}
-d\phi_0''+\phi_0(P(\la)+\d_0) = \mub\d_0
\end{equation}
in the distribution sense.
Moreover, $K(\e_n,c,\la)\to K_0(c,\la)=\phi_0(0;c,\la)$ with $n\to\infty$,
and $$\phi_0(y)=K_0\exp(-\sqrt{\frac{P(\la)}{d}}|y|).$$

It remains to show the uniqueness of the limit function. Let $\phi_1$ be another accumulation point for $(\phi(.;\e))_\e$. Then $\phi_1$ also satisfies
(\ref{eqlimite}) in the distribution sense, and $\phi_1(y)=\phi_1(0)\exp(-\sqrt{\frac{P(\la)}{d}}|y|)$. Then $\psi=\phi_0-\phi_1$ is a solution of
\begin{equation*}
-d\psi''+\psi(P(\la)+\d_0)=0.
\end{equation*}
Now, let us consider $(\psi_n)_n\subset\mathcal{D}(\R)$ such that $\psi_n\to\psi$ uniformly on every compact and $\psi_n'\to\psi'$ on every compact
of $\R\backslash\{0\}.$ We get 
$$
d\|\psi'\|^2_{L^2}+P(\la)\|\psi\|^2_{L^2}+\psi(0)^2=0,
$$
and $\psi\equiv0$. So $\phi_0$ is the unique accumulation point of $(\phi(.;\e))_\e$, hence $\phi(.;\e)\underset{\e\to0}{\longrightarrow}\phi_0$ 
uniformly on compact sets. Hence we have proved the following lemma.

\begin{lem}\label{convergencephi}
Let us consider $\phi$ as a function of $\e,c,\la,$ extended to $\phi_0$ 
for $\e=0.$ Then $\phi(.;\e,c,\la)$ is continuous from $[0,1]\times \overline{\MD}$
to $C^0(\R)$ and bounded in $C^1$ as a $y$-function on every compact set on it.
\end{lem}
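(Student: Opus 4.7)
The plan is to combine the uniform $C^1$ bound coming from Lemma \ref{borneuniformephi} with a compactness-plus-uniqueness argument. First I would fix a compact set $K\subset\overline{\MD}$. For each $(c,\la)\in K$ and each $\e\in(0,1]$, Lemma \ref{borneuniformephi} says that $\phi(\cdot;\e,c,\la)$ is bounded in $C^1(\R)$ by a constant depending only on $K$. Away from $y=0$ the explicit exponential form \eqref{referee2} gives uniform decay. Therefore the Arzel\`a--Ascoli theorem, together with Cantor's diagonal argument, provides for every sequence $\e_n\to 0$ a subsequence along which $\phi(\cdot;\e_n,c,\la)$ converges locally uniformly in $y$ to some $\phi_0(\cdot;c,\la)\in C_b(\R)$.

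The next step is to identify a limiting equation. Testing \eqref{gamma2e} against a function $\chi\in\mathcal{D}(\R)$ and using $\mu_\e\to\mub\delta_0$, $\nu_\e\to\delta_0$ in the distribution sense (recall $\nub=1$), the limit $\phi_0$ satisfies
\begin{equation*}
-d\phi_0''+\phi_0\bigl(P(\la)+\delta_0\bigr)=\mub\delta_0
\end{equation*}
in $\mathcal{D}'(\R)$. Outside $y=0$ this is a linear ODE with exponential solutions, and the $H^1$ decay condition forces
$\phi_0(y)=\phi_0(0)e^{-\sqrt{P(\la)/d}\,|y|}$, while the jump relation at $0$ determines $\phi_0(0)$ explicitly.

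To upgrade from subsequential convergence to genuine convergence I would prove uniqueness of the limit problem. If $\phi_0,\phi_1$ are two such limits, their difference $\psi$ solves $-d\psi''+\psi(P(\la)+\delta_0)=0$ distributionally, both pieces are exponentials, so $\psi$ lies in $H^1(\R)$. Multiplying by a regularisation of $\psi$ and passing to the limit yields the energy identity
\begin{equation*}
d\|\psi'\|_{L^2}^2+P(\la)\|\psi\|_{L^2}^2+\psi(0)^2=0,
\end{equation*}
which forces $\psi\equiv0$ since $P(\la)>0$ on $\MD$. Thus the full family $\phi(\cdot;\e,c,\la)$ converges to $\phi_0(\cdot;c,\la)$ locally uniformly in $y$ as $\e\to 0$, and the exponential tail makes this convergence in fact uniform on $\R$.

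Finally, for the joint continuity in $(\e,c,\la)\in[0,1]\times\overline{\MD}$, I would re-run the Arzel\`a--Ascoli/uniqueness machinery with sequences $(\e_n,c_n,\la_n)\to(\e^\infty,c^\infty,\la^\infty)$: the uniform $C^1$ bounds of Lemma \ref{borneuniformephi} are locally uniform in the parameters, so subsequential limits exist in $C^0(\R)$, they satisfy the same limiting equation (with $\e_n\to \e^\infty$ possibly $>0$, in which case standard elliptic stability applies; or $\e^\infty=0$, covered above), and the uniqueness argument pins the limit down. The main obstacle in all of this is really the uniqueness/energy step at $\e=0$, because the distributional coefficient $\delta_0$ prevents a naive multiplication by $\psi$; the proper way is to approximate $\psi$ by smooth functions converging in $H^1$ and pointwise at $0$, and to check that every term in the energy identity passes to the limit. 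Everything else is routine elliptic regularity and compactness.
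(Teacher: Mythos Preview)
Your proposal is correct and follows essentially the same route as the paper: Arzel\`a--Ascoli compactness from the uniform $C^1$ bounds of Lemma~\ref{borneuniformephi}, passage to the distributional limit $-d\phi_0''+\phi_0(P(\la)+\delta_0)=\mub\delta_0$, and uniqueness of the accumulation point via the energy identity $d\|\psi'\|_{L^2}^2+P(\la)\|\psi\|_{L^2}^2+\psi(0)^2=0$ obtained after regularising $\psi$. Your discussion of the joint continuity in $(\e,c,\la)$ is in fact slightly more explicit than the paper's, which simply relies on the local uniformity of the bounds in $(c,\la)$ already built into Lemma~\ref{borneuniformephi}.
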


It is now easy to see that the function $\Psi_2^{\e}$ converges continuously in $\la,c$, to a
limit function 
\begin{equation*}
\Psi_2^l: \left\{ \begin{array}{ccl}
]\la_2^- , \la_2^+[ & \longrightarrow & \R \\
\la & \longmapsto & \phi_0(c,\la)
\end{array} \right.
\end{equation*}
and the curve $\Gamma_2^\e$ converges to the graph of $\Psi_2^l$. Let us denote it $\Gamma_2^l$.

\paragraph{The limit curve $\Gamma_2^l$.} Now let us show that $\Gamma_2^l=\Gamma_2^0.$ The limit function
$\phi_0$ is of the form $\phi_0(y)=K_0\exp(-\sqrt{\frac{P(\la)}{d}}|y|)$ and satisfies (\ref{eqlimite}) in the distribution sense.
Applying the two sides of (\ref{eqlimite}) to $\phi_0$ (or, to be strictly rigorous, to a sequence $\lp\phi_n\rp$ that tends to $\phi_0$), 
we get
$$
d\|\phi_0'\|^2_{L^2}+P(\la)\|\phi_0\|^2_{L^2}+\phi_0(0)^2=\mub \phi_0(0).
$$
Using the explicit formula for $\phi_0$, we obtain: 
$$
2\sqrt{dP(\la)}\phi_0(0)^2+\phi_0(0)^2 = \mub\phi_0(0).
$$
Hence, because 0 is not a solution,
$$
\phi_0(0)=\frac{\mub}{1+2\sqrt{dP(\la)}}.
$$
Hence, $\Psi_2^l=\Psi_2^0$ and $\Gamma_2^l=\Gamma_2^0$ which concludes the proof of Proposition \ref{convergencec}.
\qed

\begin{remark}
 Actually, the spreading speed in the local case was not devised exactly with the system (\ref{systemelaphilocal}), but with the following system.
\begin{equation*}
\begin{cases}
-D\la^2+c\la = & \frac{\mub}{1+2d\b}-\mub \\
-d\la^2+c\la = & f'(0)+d\b^2.
\end{cases}
\end{equation*}
But, setting $\Phi(\b) = \frac{\mub}{1+2d\b},$ they are equivalent.
\end{remark}

%

\section{Convergence of the stationary solutions}

We have already showed in \cite{Pauthier} that, 
for all $\e>0,$ there exists a unique positive and bounded stationary solution of 
(\ref{RPeps}), which will be  denoted $(U_\e,V_\e).$ Moreover, this stationary solution is $x-$independent and
satisfies $\displaystyle \lim_{y\to\pm\infty}V_\e(y)=1.$ The corresponding equation is
\begin{equation}
\label{stationaryeps}
\begin{cases}
\mub U_\e = \int \nu_\e(y)V_\e(y)dy \\
-dV_\e''(y) = f(V_\e)+\mu_\e(y)U_\e-\nu_\e(y)V_\e(y).
\end{cases}
\end{equation}
Solutions of (\ref{stationaryeps}) depend only of the $y$-variable, hence $U$ is 
constant and $V$ is entirely determined by the following integro-differential equation
\begin{equation}
\label{Veqeps}
-dV''_\e(y)=f(V_\e)+\frac{\mu_\e(y)}{\mub}\int \nu_\e(z)V_\e(z)dz-\nu_\e(y)V_\e(y).
\end{equation}
The main result of this section lies in the next proposition.

\begin{prop}
\label{convVeps}
$(V_\e)_\e$ converges uniformly to the constant function 1 when $\e$ goes to 0.	
\end{prop}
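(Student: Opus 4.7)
Plan: On $|y|>\e$, equation (\ref{Veqeps}) reduces to the autonomous ODE $-dV_\e''=f(V_\e)$, which together with $V_\e(\pm\infty)=1$ gives a complete phase-plane description on each half-line $(-\infty,-\e)$ and $(\e,+\infty)$. My strategy is to show $V_\e(\pm\e)\to 1$, then close by monotonicity. First I would obtain a uniform $L^\infty$ bound on $(V_\e)_\e$: at a supremum point of $V_\e$ the coercivity (\ref{lipf}), combined with the estimate $\int\nu_\e V_\e\le\nub\sup V_\e$ for the nonlocal term, prevents blow-up as $\e\to 0$. Given uniform bounds, on each outer half-line the energy identity
$$\tfrac{d}{2}(V_\e')^2+F(V_\e)=F(1),\qquad F(s)=\int_0^s f,$$
forces $V_\e$ to be monotone with $V_\e'(\pm\e)$ bounded and $|V_\e(y)-1|\le|V_\e(\pm\e)-1|$ on the corresponding half-line, so uniform convergence on $\{|y|\ge\e\}$ reduces to $V_\e(\pm\e)\to 1$.

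Next I would rescale inside by $W_\e(\xi)=V_\e(\e\xi)$ on $[-1,1]$: the rescaled equation has right-hand side of order $\e$, and $W_\e'(\pm 1)=\e V_\e'(\pm\e)=O(\e)$. Integrating twice yields $W_\e(\xi)=V_\e(\e)+O(\e)$ uniformly on $[-1,1]$; in particular $V_\e(\e)-V_\e(-\e)=O(\e)$. In parallel, integrating (\ref{Veqeps}) over $\R$ and using $\int\mu_\e=\mub$, $\int\nu_\e V_\e=\mub U_\e$, and the exponential decay of $V_\e'$ at infinity, the exchange terms cancel, giving
$$\int_\R f(V_\e(y))\,dy=0.$$

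Finally I would extract a subsequence with $V_{\e_n}(\e_n)\to V^*$, so that also $V_{\e_n}(-\e_n)\to V^*$ by the inner estimate. Using the energy identity to change variables $V=V_{\e_n}(y)$ on each outer half-line gives
$$\int_{\e_n}^{+\infty}f(V_{\e_n})\,dy=\sgn(1-V_{\e_n}(\e_n))\sqrt{2d\bigl(F(1)-F(V_{\e_n}(\e_n))\bigr)},$$
and analogously on $(-\infty,-\e_n)$, while the inner contribution is $O(\e_n)$. Passing to the limit in the integral identity forces $\sgn(1-V^*)\sqrt{F(1)-F(V^*)}=0$, hence $V^*=1$ (the only nonnegative root of $F(V)=F(1)$, since $F$ is strictly increasing on $[0,1]$ and the coercivity (\ref{lipf}) makes $F$ strictly smaller than $F(1)$ for $V>1$). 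Combining this with the outer monotonicity estimate and the inner rescaling yields $V_\e\to 1$ uniformly on $\R$. I expect the main obstacle to be the uniform $L^\infty$ bound: the integral identity alone does not preclude a concentrated spike in $[-\e,\e]$, and it is the combination of the coercivity (\ref{lipf}) with the rescaled analysis that rules this out.
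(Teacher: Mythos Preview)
Your convergence argument after the uniform bound is correct and takes a genuinely different route from the paper. The paper uses Ascoli to extract an accumulation point $V_0$, passes to the limit in (\ref{Veqeps}) in the distribution sense, observes that the Dirac contributions cancel by continuity of $V_0$, and concludes $V_0\equiv 1$ from the uniqueness of bounded solutions of $-V_0''=f(V_0)$ with $V_0(\pm\infty)=1$. Your approach via the integral identity $\int_\R f(V_\e)=0$ together with the energy formula on each outer half-line is more direct and quantitative: it pins down $V_\e(\pm\e)\to 1$ without any compactness extraction, and the outer monotonicity then gives uniform convergence immediately. Both are clean; yours gives a shorter endgame.

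The genuine gap is the uniform $L^\infty$ bound. A maximum-principle argument at a supremum point does \emph{not} work as you sketch it: at an interior maximum $y_0\in(-\e,\e)$ the equation gives
\[
f(M)\ \ge\ \nu_\e(y_0)\,M-\frac{\mu_\e(y_0)}{\mub}\int\nu_\e V_\e,
\]
and the right-hand side can be of order $M/\e$ with either sign, depending on the relative sizes of $\mu(y_0/\e)$ and $\nu(y_0/\e)$. The coercivity (\ref{lipf}) only controls $f(M)/M$, not $f(M)/(M/\e)$, so nothing prevents blow-up from this inequality alone. The paper's argument is more delicate and explains why the precise constant in (\ref{lipf}) matters: on one hand, integrating the rescaled equation from a critical point $z_\e$ (where $W_\e'(z_\e)=0$) to $1$ and dropping the nonnegative $\int_{z_\e}^1\nu W_\e$ term yields $V_\e'(\e)\ge -(\nub+o(1))\,\|V_\e\|_\infty$; on the other hand, the outer energy identity gives $V_\e'(\e)=-\sqrt{-\tfrac{2}{d}F(V_\e(\e))}$, and since the inner oscillation is $O(\e\|V_\e\|_\infty)$ one has $V_\e(\e)\sim\|V_\e\|_\infty$. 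If $\|V_\e\|_\infty\to\infty$, the coercivity forces $|V_\e'(\e)|\gtrsim \sqrt{2\nub^2/d}\,\|V_\e\|_\infty>\nub\,\|V_\e\|_\infty$ (this is exactly where $\lim f(s)/s<-2\nub^2/d$ enters), contradicting the inner lower bound. Your final paragraph gestures at ``coercivity combined with the rescaled analysis'', which is the right instinct, but the actual mechanism---comparing the two incompatible estimates for $V_\e'(\e)$---needs to be made explicit.
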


Of course, this implies that $U_\e\to\frac{\nub}{\mub}$ as $\e\to0.$ As a 
result, the stationary solutions of (\ref{RPeps}) converge to the stationary solutions
of (\ref{BRReq2}) which were identified in \cite{BRR1}.

The difficulties lie in the singularity in $(-\e,\e).$ Outside this interval, 
$V_\e$ satisfies
\begin{equation}
	\label{portraitphase}
	\begin{cases}
	-dV_\e''=f(V_\e) \\
	0<V_\e<+\infty, \ V_\e(\pm\infty)=1.
	\end{cases}
\end{equation}
As $(V_\e,V_\e')=(1,0)$ is a saddle point for the system (\ref{portraitphase}), it is easy to see
that solutions of (\ref{portraitphase}) belong to one of the two integral curves that 
tend to $(V_\e,V_\e')=(1,0).$ We can also notice that $V_\e(\e)>1$ (resp. $<1$) implies that 
$V_\e$ is decreasing (resp. increasing) on $(\e,+\infty).$ Thus important estimates have to be found inside $(-\e,\e).$ 
From now and without lack of generality, we will assume $d=1.$

\paragraph{A uniform $L^\infty$ boundary for $(V_\e).$}
Set $z=\frac{y}{\e}$ and
$
W_\e(z):=V_\e(y).
$
Then, we have $\|V_\e\|_\infty=\|W_\e\|_\infty,$ $\|V_\e'\|_\infty=\frac{1}{\e}\|W_\e'\|_\infty,$ and $W_\e$ satisfies in $(-1,1)$
\begin{equation}
 \label{Weps}
 -W_\e''=\e^2f(W_\e)+\e\frac{\mu(z)}{\mub}\int \nu(s)W_\e(s)ds-\e\nu(z)W_\e(z).
\end{equation}

\textit{Step 1}. Lower bound for $W_\e(1).$ From (\ref{portraitphase}), there exists at least a point $z_\e\in(-1,1)$ such that
$W_\e'(z_\e)=0.$ Hence, 
\begin{equation}\label{Wprime}
\forall z\in (-1,1),\ W_\e'(z)=\int_{z_\e}^z W_\e''(s)ds.
\end{equation}
Integrating (\ref{Weps}) yields the following rough bound, for all $\e>0:$
\begin{equation}\label{Wprimeeps}
 \forall z\in(-1,1),\ |W_\e'(z)|\leq 2\e \lp\|\nu\|_\infty+1+\e\|f\|_{Lip}\rp\|W_\e\|_{L^\infty(-1,1)}.
\end{equation}
Let us set $K=4\lp\|\nu\|_\infty+\nub+\|f\|_{Lip}\rp$ and we get the following estimate for $W_\e(1)=V_\e(\e):$
\begin{equation}
 \label{Weps1}
 \left| \frac{W_\e(1)-\|W_\e\|_{L^\infty(-1,1)}}{\|W_\e\|_{L^\infty(-1,1)}}\right| \leq K\e.
\end{equation}

\textit{Step 2.} Lower bound for $W_\e'(1).$ Using once again (\ref{Wprime}), we have
\begin{equation}
 \label{Wprime1}
 W_\e'(1)=\e\int_{z_\e}^1 \nu(s)W_\e(s)ds-\e^2\int_{z_\e}^1 f(W_\e(s))ds-\lp\frac{\e}{\mub}\int_{-1}^1 \nu(s)W_\e(s)ds\rp\int_{z_\e}^1 \mu(s)ds.
\end{equation}
The first term in the right handside in (\ref{Wprime1}) is nonnegative, hence 
$W_\e'(1)\geq -\e(\nub+\e\|f\|_{Lip})\|W_\e\|_\infty$ and, in the $y$-variable : 
\begin{equation}
 \label{Vprime1}
 V_\e'(\e)\geq -\|V_\e\|_\infty(\nub+\e\|f\|_{Lip}).
\end{equation}

\textit{Step 3}. Proof of the boundedness by contradiction. Let us suppose that
$$\lim_{\e\to0}\sup \|V_\e\|_{L^\infty} = +\infty.$$
That is, there exists a sequence $\e_n\to0$ such that
$\displaystyle
\underset{\R}{\sup }V_{\e_n}=\underset{(-\e_n,\e_n)}{\sup }V_{\e_n}\underset{n\to\infty}{\longrightarrow}+\infty.
$
From (\ref{Weps1}), we have $V_{\e_n}(\e_n)\geq \|V_{\e_n}\|_\infty(1-K\e_n).$ Now, recall that on $(\e_n,+\infty),$
the function $V_{\e_n}$ satisfies (\ref{portraitphase}). Multiply it by $V_{\e_n}'$ and integrate, we get
$V_{\e_n}'^2(\e_n)=-2F(V_{\e_n}(\e_n))$ where $F(t):=\int_1^t f(s)ds$ is an antederivative of $f.$
Considering the hypotheses on $f$ this gives
$$
V_{\e_n}'(\e_n)\underset{n\to\infty}{\sim}-\sqrt{\|f\|_{Lip}}V_{\e_n}(\e_n).
$$
From hypothesis (\ref{lipf}) on $f,$ $\|f\|_{Lip}>2$ and we get a contradiction with (\ref{Vprime1}) and (\ref{Weps1}). As a result,
$\lp\|V_\e\|_{L^\infty(\R)}\rp_\e$ and $\lp\|V'_\e\|_{L^\infty(\R)}\rp_\e$ are uniformly bounded as $\e$ goes to 0.

\paragraph{Convergence of the stationary solutions} From the previous paragraph and Ascoli's Theorem, $(V_\e)_\e$ 
admits at least one accumulation point, let say $V_0$, and the convergence is uniform on every compact set, thus uniform
on $\R$ (from the monotonicity of $V_\e$ outside $(-\e,\e),$ or even a diagonal argument). So $V_0$ is continuous, bounded, 
tends to 1 at infinity.
Passing to the limit in (\ref{Veqeps}), it satisfies in the distribution sense
$$
-V_0''=f(V_0)+\nub\d_0(V_0(0)-V_0).
$$
As the support of the Dirac distribution is reduced to $\{0\}$ , and because of the continuity of $V_0,$ it satisfies
in the classical sense
\begin{equation}\label{groumf}
 \begin{cases}
  -V_0''(y)=f(V_0(y)), \ y\in \R \\
  V_0(\pm\infty)=1.
 \end{cases}
\end{equation}
The only solution of (\ref{groumf}) is $V_0\equiv1.$ Hence,
the set $(V_\e)_\e$ admits only one accumulation point, so $V_\e\to 1$ as $\e$ goes to 0 uniformly on $\R,$ and the proof
of Proposition \ref{convVeps} is complete.
\qed

This convergence allows us to assert that there exist $0<m<M<+\infty$ such that 
\begin{equation}\label{uniformsupersol}
 m < V_\e(y)< M,\ \forall \e>0,\ \forall y\in\R.
\end{equation}
Thus, any solution of (\ref{RPeps}) starting from an initial datum $(u_0,v_0)\leq(\frac{\nub}{\mub}m,m)$
will remain below $M,$ which gives a uniform supersolution in $\e.$


\section{Uniform bounds on the resolvents}\label{sectionresolvent}
Consider the two linear models
\begin{equation}
\label{RPepsli}
\begin{cases}
\partial_t u-D \partial_{xx} u = -\mub u+\int \nu_\e(y)v(t,x,y)dy & x \in \R,\ t>0 \\
\partial_t v-d\Delta v = \mu_\e(y)u(t,x)-\nu_\e(y)v(t,x,y) & (x,y)\in \R^2,\ t>0
\end{cases}
\end{equation}
and
\begin{equation}
\label{BRReq2li}
\begin{cases}
\partial_t u-D \partial_{xx} u= v(x,0,t)-\mub u & x\in\R,\ t>0\\
\partial_t v-d\Delta v=0 & (x,y)\in\R\times\R^*,\ t>0\\
v(t,x,0^+)=v(t,x,0^-), & x\in\R,\ t>0 \\
-d\left\{ \partial_y v(t,x,0^+)-\partial_y v(t,x,0^-) \right\}=\mub u(t,x)-\nub v(t,x,0) & x\in\R,\ t>0.
\end{cases}
\end{equation}
The general goal is to give a uniform bound on the 
difference between solutions of the two above linear systems. We choose a sectorial 
operators approach to get an integral representation of the semigroups generated by (\ref{RPepsli}) and (\ref{BRReq2li}).
They both can be written in the form
\begin{equation}\label{edpedo}
\partial_t \begin{pmatrix}
u \\
v
\end{pmatrix}
=L\begin{pmatrix}
u \\
v
\end{pmatrix}
\end{equation}
where $L=L_\e$ in (\ref{RPepsli}) and $L=L_0$ in (\ref{BRReq2li}) are linear unbounded operators defined by
$$
L_\e \ : \ \left\{ \begin{array}{rcl}
\mathcal{D}(L_\e)\subset X & \longrightarrow & X \\
\begin{pmatrix}u\\v\end{pmatrix} & \longmapsto & \begin{pmatrix}D\partial_{xx}u-\mub u+\int \nu_\e v\\
d\Delta v+\mu_\e u-\nu_\e v\end{pmatrix} 
\end{array}\right.
$$
$$
L_0 \ : \ \left\{ \begin{array}{rcl}
\mathcal{D}(L_0)\subset X & \longrightarrow & X \\
\begin{pmatrix}u\\v\end{pmatrix} & \longmapsto & \begin{pmatrix}D\partial_{xx}u-\mub u+\nub v(.,0)\\
d\Delta v\end{pmatrix} 
\end{array}\right.
$$
with $X$ the space of continuous functions decaying to 0 at infinity $\MC_0(\R)\times\MC_0(\R^2)$. 
The domains are those of the Laplace operator, with 
exchange conditions included in $\mathcal{D}(L_0).$

We recall the definition of a sectorial operator:
\begin{definition}\label{sectorialdef}
	A linear operator $A : \mathcal{D}(A)\subset X \to X$ is {\bf sectorial} 
	if there are constants $\vp\in\R,$ $\theta\in (\frac{\pi}{2},\pi),$ and $M>0$ such that
	\begin{equation}\label{defsectorial}
	\begin{cases}
	(i) \qquad \rho(A)\supset S_{\theta,\vp}:=\{\la\in\C:\la\neq\vp,|\arg(\la-\vp)|<\theta\} \\
	(ii) \qquad \|R(\la,A)\|_{\mathcal{L}(X)}\leq \frac{M}{|\la-\vp|},\ \la\in S_{\theta,\vp} 
	\end{cases}
	\end{equation}
\end{definition}

\begin{prop}
	\label{sectorial}
	Let $\vp>\max(2,3\mub),$ $\frac{\pi}{2}<\theta<\frac{3\pi}{4}.$ Then $(L_\e,\mathcal{D}(L_\e))$ and $(L_0,\mathcal{D}(L_0))$ are sectorial with
	sector $S_{\theta,\vp},$ $\forall \e>0.$
\end{prop}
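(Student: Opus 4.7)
The plan is to verify both conditions in Definition \ref{sectorialdef} for $L_\e$ and $L_0$ on $X=\mathcal{C}_0(\R)\times\mathcal{C}_0(\R^2)$, by treating them as perturbations of a diagonal Laplacian whose sectoriality on $\mathcal{C}_0$ spaces is classical. First I would recall that the diagonal Laplacian $A_{\rm lap} := \text{diag}(D\partial_{xx}, d\Delta)$ is sectorial on $X$: its spectrum lies in $(-\infty,0]$, and on every sector $S_{\theta,\vp}$ with $\vp>0$ and $\theta\in(\pi/2,\pi)$ one has $\|R(\la, A_{\rm lap})\|_{\ML(X)} \leq M(\theta)/|\la-\vp|$, by the classical theory of the heat semigroup on spaces of continuous functions. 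Multiplication by the nonnegative function $\nu_\e(y)$ only shifts the spectrum to the left, so the variant $\text{diag}(D\partial_{xx}-\mub,\, d\Delta - \nu_\e)$ is also sectorial on the same sector.

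For the nonlocal operator, I would split $L_\e = A_\e + B_\e$, where $A_\e\binom{u}{v}:=\binom{D\partial_{xx}u-\mub u}{d\Delta v - \nu_\e v}$ and $B_\e\binom{u}{v}:=\binom{\int\nu_\e v\,dy}{\mu_\e u}$. Then $A_\e$ is sectorial by the previous step. Since $\|B_\e\|_{\ML(X)}$ blows up like $1/\e$, naive bounded-perturbation theory would force an $\e$-dependent vertex; to keep $\vp>\max(2,3\mub)$ independent of $\e$, I would run a Schur-complement argument on the resolvent equation $(\la-L_\e)\binom{u}{v}=\binom{f}{g}$: solve the second equation for $v = (\la-d\Delta+\nu_\e)^{-1}(g+\mu_\e u)$, and substitute into the first to obtain an effective scalar equation $(\la+\mub-D\partial_{xx})u - T_\e(\la)u = \tilde f(\la)$, where $T_\e(\la)u := \int\nu_\e(y)\,[(\la-d\Delta+\nu_\e)^{-1}(\mu_\e u)](y)\,dy$. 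The key estimate is then that $T_\e(\la)$, viewed as an operator on $\mathcal{C}_0(\R)$, has norm controlled uniformly in $\e$ thanks to the mollifier structure of $\mu_\e,\nu_\e$ and the resolvent bound on $d\partial_{yy}-\nu_\e$ inherited from the Laplacian. Together with $\vp>3\mub$ this yields invertibility and the resolvent bound $M/|\la-\vp|$ for $L_\e$.

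For the local operator $L_0$, whose exchange is encoded in the domain, I would instead take the Fourier transform in $x$. For each frequency $\xi$ the resolvent equation becomes a 1D ODE in $y$ with transmission conditions at $y=0$; solving via exponentials $e^{-\alpha(\la,\xi)|y|}$, where $\alpha(\la,\xi)=\sqrt{(\la+d\xi^2)/d}$ (principal branch, positive real part), yields a $2\times 2$ algebraic system in $\hat u(\xi)$ and $\hat v(\xi,0)$. Its determinant, a function of $(\la,\xi)$, stays bounded away from zero on $S_{\theta,\vp}$ uniformly in $\xi$ as soon as $\vp>3\mub$. Direct estimation of the resulting explicit formulas, followed by inverse Fourier transform, gives $\|R(\la,L_0)\|_{\ML(X)}\leq M/|\la-\vp|$.

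The main obstacle is the quantitative verification of condition $(ii)$: merely showing $\la-L_\e$ and $\la-L_0$ to be invertible on the sector is easy; obtaining the sharp bound $M/|\la-\vp|$ is what takes work. For $L_\e$ this requires controlling $T_\e(\la)$ via the singular-limit structure of $\mu_\e,\nu_\e$; for $L_0$ it reduces to Fourier-side algebra that must be carried through uniformly in $\xi$.
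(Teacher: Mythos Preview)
The paper gives no proof of this proposition: for $L_\e$ it calls the argument ``quite standard'' and refers to Henry~\cite{henry}, and for $L_0$ it cites the thesis~\cite{these_AC}. So there is no detailed argument in the text to compare against, only these references.

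Your route for $L_0$ --- Fourier transform in $x$, reduce to a one-dimensional transmission ODE in $y$, solve by exponentials $e^{-\alpha|y|}$, and check that the resulting $2\times2$ algebraic system has determinant bounded away from zero uniformly in $\xi$ --- is precisely the method of the cited reference and is the natural one. One caveat: on $\mathcal{C}_0$ the Fourier transform is not an isometry, so to convert the frequency-wise bounds into an $L^\infty$ resolvent estimate you must recast them as kernel bounds (or argue by density); this is routine but should be said.

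For $L_\e$ you are doing more than the paper intends. Notice the sentence immediately after the statement: ``Let us denote $M_0,M_\e$ the corresponding constant for the norm of the resolvents.'' The constant is explicitly allowed to depend on $\e$; only the \emph{sector} $S_{\theta,\vp}$ is required to be common. What the paper presumably has in mind by ``standard'' is: for each fixed $\e$ the coupling $B_\e$ is bounded, so the perturbation theory in~\cite{henry} gives sectoriality with some vertex $\vp_\e$, and one then translates to the prescribed $\vp>\max(2,3\mub)$ at the cost of enlarging $M_\e$. You are right that this last step needs the spectrum of $L_\e$ to lie to the left of $\vp$ uniformly in $\e$; that, however, follows from the linear travelling-wave supersolutions of Section~\ref{sectionvitesse} (Lemma~\ref{borneuniformephi} gives a uniformly bounded $\phi^*_\e$, hence an $\e$-independent exponential supersolution for the semigroup). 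The sharper, genuinely uniform-in-$\e$ resolvent bounds that the paper actually needs are not established in this proposition but separately, in Lemma~\ref{grandslambda} and Proposition~\ref{petitslambda}.

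Your Schur-complement approach is correct and would yield the stronger conclusion of a uniform constant $M$, but the pivotal claim --- that $T_\e(\la)$ is bounded on $\mathcal{C}_0(\R)$ uniformly in $\e$ and in $\la\in S_{\theta,\vp}$ --- is itself nontrivial in sup norm; carrying it out rigorously would essentially reproduce the computations of Lemma~\ref{petitesfrequences} (or require the Kato-type inequality of the appendix). So the strategy is sound, just heavier than what the statement strictly demands.
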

 Let us denote $M_0,M_\e$ the corresponding constant for the norm of
	the resolvents.
The proof for $L_\e$ is quite standard and omitted here. There is a general approach of the theory in \cite{henry}. A proof for 
$L_0$ can be found in \cite{these_AC}. Assumptions on $\vp$ and $\theta$ are only technical and can be improved.

From Proposition \ref{sectorial}, we know (see \cite{henry} for instance) that, for all $t>0,$ solutions of (\ref{edpedo}) have the form
$$
\begin{pmatrix} u(t) \\ v(t) \end{pmatrix} = 
e^{tL}\begin{pmatrix} u_0 \\ v_0 \end{pmatrix}
$$
where 
\begin{equation}\label{semigroup}
e^{tL}=\frac{1}{2\pi i}\int_{\Gamma_{r,\vartheta}}e^{t\la}R(\la,L)d\la	
\end{equation}
for any $r>0, \vartheta\in(\frac{\pi}{2},\theta),$ where $\Gamma_{r, \vartheta}:=\{\la,|\arg(\la-\vp-r)|=\vartheta\}$ is
a counterclockwise oriented curve which encloses the spectrum of $L,$ and will be denoted $\Gamma$ when there is no possible confusion.
Let us fix from now and for all $r>0$ and the angle $\vartheta$ as above. A parametrisation
of $\Gamma_{r, \vartheta}$ is then given by
$s\in\R\mapsto \vp+r+s e^{i\vartheta. \sgn(s)}.$

For $\lp U,V\rp\in X,$ we will denote in this section:
 \begin{equation*} 
 \begin{cases}
 (u,v)=R(\la,L_0)(U,V) \\
 (u_\e,v_\e)=R(\la,L_\e)(U,V)
 \end{cases}
 \end{equation*}
  that is 
 \begin{equation}\label{avtfourier0}
 \begin{cases}
 (\la+\mub) u- D\partial_{xx}u = v(.,0) + U \qquad  x\in\R \\
 \la v-\Delta v = V \qquad (x,y)\in\R\times\R^* \\
 v(x,0^+)=v(x,0^-)\qquad x\in\R\\
 -(\partial_y v(x,0^+)-\partial_y v(x,0^-)) = \mub u-v(x,0) \qquad x\in\R
 \end{cases}
 \end{equation}
 and
 \begin{equation}\label{avtfouriereps}
\begin{cases}
(\la+\mub) u(x)-D\partial_{xx}u(x) & = \int\nu_\e(y)v(x,y)dy + U(x) \\
(\la+\frac{1}{\e}\nu(\frac{y}{\e})) v(x,y) -\Delta v(x,y) & = \frac{1}{\e}\mu(\frac{y}{\e})u(x)+V(x,y).
\end{cases}
 \end{equation}
The purpose of this section is to give some estimates on the resolvents, that is on the solutions of (\ref{avtfourier0})
and (\ref{avtfouriereps}). They are given in Lemma \ref{grandslambda} and Corollary \ref{corl1}.
 
Lastly, let us recall (see \cite{henry} or \cite{Lunardi}) that the Laplace operator is also sectorial, with a sector 
strictly containing $S_{\theta,\vp}.$ Thus, there exists a constant $M>0$ such that for $d\in\{1,2\},$
\begin{equation}
\label{boitenoire}
\forall w\in \MC_0(\R^d),\ \forall \la\in S_{\theta,\vp},\ \|w\|_\infty \leq \frac{M}{|\la|}\|\Delta w-\la w\|_\infty.
\end{equation}


\subsection{Large values of $\lb\la\rb$}
\begin{lem}
	\label{grandslambda}
	There exist $\e_0>0$ and a constant $C_1$ depending only on $D,$ $\mub$ and $\vartheta$ such that for all positive $\e<\e_0,$ for $\beta>\frac{1}{2},$  
	$$\textrm{if }\la\in \Gamma_{r,\vartheta} \textrm{ and } |\la|>\e^{-\beta} \textrm{, then }\|R(\la,L_\e)\|\leq C_1\max(\e^\beta,\e^{2\beta-1}). $$
\end{lem}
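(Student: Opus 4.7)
The plan is to apply the sectorial a priori bound \eqref{boitenoire} to each of the two scalar equations in the resolvent system \eqref{avtfouriereps}, viewing cross-coupling terms as forcings, and then iterate to absorb the $\e$-dependent self-interaction. Reading the first equation as $(\la+\mub-D\partial_{xx})u=\int\nu_\e v\,dy+U$ and using $\nub=1$, the bound \eqref{boitenoire} applied to $-D\partial_{xx}$ yields $\|u\|_\infty\le\frac{M'}{|\la+\mub|}(\|v\|_\infty+\|U\|_\infty)\le\frac{C}{|\la|}(\|v\|_\infty+\|U\|_\infty)$ for $|\la|$ large. Reading the second as $(\la-\Delta)v=\mu_\e u+V-\nu_\e v$ and bounding the source pointwise using $\|\mu_\e\|_\infty,\|\nu_\e\|_\infty\le C/\e$, I get $\|v\|_\infty\le\frac{M}{|\la|}\bigl(\frac{C}{\e}\|u\|_\infty+\|V\|_\infty+\frac{C}{\e}\|v\|_\infty\bigr)$. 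Substituting the first into the second produces an inequality of the form
\[
\|v\|_\infty\Bigl(1-\frac{C_1}{\e|\la|}-\frac{C_2}{\e|\la|^2}\Bigr)\le\frac{C_2}{\e|\la|^2}\|U\|_\infty+\frac{M}{|\la|}\|V\|_\infty.
\]

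Under the hypothesis $|\la|>\e^{-\beta}$ with $\beta>1/2$, the coefficient $\frac{C_2}{\e|\la|^2}$ is $O(\e^{2\beta-1})\to 0$, whereas $\frac{C_1}{\e|\la|}$ is $O(\e^{\beta-1})$ and only tends to $0$ when $\beta>1$. This splits the argument into two regimes. When $\beta>1$, direct absorption of both left-hand coefficients yields $\|u\|_\infty+\|v\|_\infty\le\frac{C}{|\la|}(\|U\|_\infty+\|V\|_\infty)\le C\e^\beta(\|U\|_\infty+\|V\|_\infty)$. When $1/2<\beta\le 1$, the $\nu_\e v$ term obstructs absorption; the remedy is to keep the nonnegative potential $\nu_\e$ on the left-hand side and apply a uniform-in-$\e$ sectorial estimate for $-\Delta+\nu_\e$ (justified below), giving $\|v\|_\infty\le\frac{C}{|\la|}\bigl(\frac{1}{\e}\|u\|_\infty+\|V\|_\infty\bigr)$. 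Substituting into the $u$-bound and absorbing $\|u\|_\infty$, which is now possible because $\e|\la|^2\to+\infty$, produces $\|u\|_\infty+\|v\|_\infty\le\frac{C}{\e|\la|^2}(\|U\|_\infty+\|V\|_\infty)\le C\e^{2\beta-1}(\|U\|_\infty+\|V\|_\infty)$. The two regimes combine into the claimed bound $C_1\max(\e^\beta,\e^{2\beta-1})$.

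The main obstacle is the uniform-in-$\e$ sectorial estimate for the Schr\"odinger-type operator $-\Delta+\nu_\e$ when $\nu_\e=\e^{-1}\nu(y/\e)$ has $L^\infty$-norm of order $1/\e$. Because this is not a bounded perturbation of $-\Delta$, the bound \eqref{boitenoire} cannot be invoked as a black box. However, the facts that $\nu_\e\ge 0$ and $\int\nu_\e=\nub=1$ place $\nu_\e$ in the Kato class: for real $\la$ the maximum principle delivers $\|(\la-\Delta+\nu_\e)^{-1}\|_{\infty\to\infty}\le 1/\la$ outright, and for complex $\la\in S_{\theta,\vp}$ a quadratic-form or Henry-style perturbation argument gives a sectorial bound with constants depending only on the sector angle. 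Granted this, all remaining steps are routine absorption of cross-terms.
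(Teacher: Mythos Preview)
Your proposal is correct and lands on the same estimate as the paper, but it takes a longer route in two respects.

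First, the case split $\beta>1$ versus $\tfrac12<\beta\le 1$ is unnecessary. Once you have the uniform-in-$\e$ resolvent bound for $-\Delta+\nu_\e$ (your second regime), the substitution-and-absorption argument already works for every $\beta>\tfrac12$, since the only smallness needed is $\frac{C}{\e|\la|^2}\le C\e^{2\beta-1}\to 0$. The paper does exactly this single substitution, with no case distinction. (Minor slip: after substitution the $\|V\|_\infty$ term carries a factor $1/|\la|$, not $1/(\e|\la|^2)$; this is why the final bound is $\max(\e^\beta,\e^{2\beta-1})$ rather than just $\e^{2\beta-1}$.)

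Second, for the key step you flag as the ``main obstacle'' --- the uniform sectorial bound for $-\Delta+\nu_\e$ --- the paper gives a much more elementary justification than Kato-class or quadratic-form machinery. The observation is purely geometric: for $\la\in\Gamma_{r,\vartheta}$ with $|\la|>\e^{-\beta}$ one has $|\Im{\la}|>\tfrac12\sin(\vartheta)\,\e^{-\beta}$, and since $\nu_\e\ge 0$ is real, the shifted quantity $\la+\nu_\e(y)$ remains in the sector $S_{\theta,\vp}$ for every $y$, with $|\la+\nu_\e(y)|\ge|\Im{\la}|>\tfrac12\sin(\vartheta)\,\e^{-\beta}$. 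One then applies \eqref{boitenoire} with $\la+\nu_\e$ playing the role of the spectral parameter, giving $\|v\|_\infty\le \e^{\beta}\frac{2M}{\sin\vartheta}\bigl(\|V\|_\infty+\tfrac{\|\mu\|_\infty}{\e}\|u\|_\infty\bigr)$ directly. This avoids any abstract perturbation theory and makes transparent why the constant depends only on $D,\mub,\vartheta$. Your approach would also work, but the imaginary-part argument is both simpler and more concrete here.
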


\begin{proof}
	Let $(U,V)\in X,$ and $(u_\e,v_\e)=R(\la,L_\e)(U,V)$ be a solution of (\ref{avtfouriereps}).
	Assumptions on $\la$ imply for $\e$ small enough that 
	$|\Im {\la}|>\frac{1}{2}\sin(\vartheta)\e^{-\beta}.$ Thus, $\nu$ being a real nonnegative function, 
	$\la+\frac{1}{\e}\nu(\frac{y}{\e})\in S_{\theta,\vp}, \forall y\in\R$ and 
	$$
	\left|\la+\frac{1}{\e}\nu(\frac{y}{\e})\right|\geq\Im{\la+\frac{1}{\e}\nu(\frac{y}{\e})}>\frac{1}{2}\sin(\vartheta)\e^{-\beta}.
	$$
	In the same way, we get a similar lower bound for $\left|\la+\mub\right|.$
	Now we use (\ref{boitenoire}) in (\ref{avtfouriereps}) with the above estimates and get
	\begin{equation}
	\label{gdslambda1}
	\begin{cases}
	\|u\|_\infty \leq \e^\beta\frac{2MD^2}{\sin \vartheta}\lp \|U\|_\infty+\|v\|_\infty\rp  \\
	\|v\|_\infty \leq \e^\beta\frac{2M}{\sin \vartheta}\lp \|V\|_\infty+\frac{\lV\mu\rV_\infty}{\e}\lV u\rV_\infty\rp.
	\end{cases}
	\end{equation}
	Using the first equation of (\ref{gdslambda1}) in the second one yields
	$$
	\|v\|_\infty\lp1-\e^{2\beta-1}\lp\frac{2MD}{\sin \vartheta}\rp^2\lV\mu\rV_\infty\rp\leq 
	\e^\beta \frac{2M}{\sin\vartheta}\|V\|_\infty+\e^{2\beta-1}\lV\mu\rV_\infty \lp\frac{2MD}{\sin \vartheta}\rp^2\|U\|_\infty
	$$
	i.e., for $\e$ small enough,
	\begin{equation}
	\label{gdslambda2}
	\|v\|_\infty \leq K_1\max(\e^\beta,\e^{2\beta-1})(\|V\|_\infty+\|U\|_\infty)
	\end{equation}
	with $K_1$ depending only on $D,\mub,\vartheta.$ In the same vein, using (\ref{gdslambda2}) in the second equation 
	of (\ref{gdslambda1}) produces the same estimate, and the proof is concluded.
\end{proof}

\subsection{Small values of $\lb\la\rb$} 
The values are treated with the help of the Fourier transform in $x$-direction.
For $U\in \MC_0(\R)\cap L^1(\R),$ $V\in \MC_0(\R^2)\cap L^1(\R,L^\infty(\R)),$ 
we define 
$$
\Uh(\xi):=\int_{\R}e^{-ix\xi}U(x)dx \qquad \Vh(\xi,y):=\int_{\R}e^{-ix\xi}V(x,y)dx.
$$

\begin{prop}
\label{petitslambda}
There exist $\e_1$ a constant $C_2$ depending only on $D,$ $\mub$ and $\vartheta$ such that for all $\e<\e_1,$ for $\beta>\frac{1}{2},$ $\gamma>0,$ 
such that $1-\frac{3}{2}(\b+\gamma)>0,$ if $\la\in \Gamma_{r,\vartheta} \textrm{ and } |\la|<\e^{-\beta}$ then
$$
\lV\lp R(\la,L_\e)-R(\la,L_0)\rp(U,V)\rV_\infty\leq C_2\e^{1-\frac{3}{2}(\beta+\gamma)}
\lp \Vert\Uh\|_{L^\infty(\R)}+\Vert\Vh\|_{L^\infty(\R^2)}\rp.
$$
\end{prop}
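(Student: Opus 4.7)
The plan is to use the Fourier transform in $x$ to decouple the $x$-variable and reduce each of the resolvent problems (\ref{avtfourier0}) and (\ref{avtfouriereps}) to a one-parameter family of ODEs in $y$ indexed by the dual variable $\xi$, and to compare the two resulting ODEs pointwise in $\xi$ before inverting. Writing $\hat u,\hat v,\hat u_\e,\hat v_\e$ for the $x$-Fourier transforms of the unknowns, the limit system becomes, for each $\xi$, a piecewise linear ODE in $y$ with a jump condition at $y=0$ algebraically coupled to $\hat u(\xi)$; the $\e$-system becomes a smooth ODE whose potential $\la+d\xi^2+\nu_\e(y)$ and source $\mu_\e(y)\hat u_\e(\xi)+\hat V$ differ from those of the limit ODE only on $(-\e,\e)$.

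The comparison is then carried out by splitting the $\xi$-axis at $|\xi|=\e^{-\gamma}$. For low frequencies $|\xi|<\e^{-\gamma}$, the key technical step is a matched-asymptotics analysis on the shrinking interval $(-\e,\e)$: setting $z=y/\e$ and $w_\e(z):=\hat v_\e(\xi,\e z)$, one obtains
\begin{equation*}
-d\,\partial_{zz}w_\e+\bigl(\e^2\la+\e^2 d\xi^2+\e\nu(z)\bigr)w_\e=\e\mu(z)\hat u_\e(\xi)+\e^2 \hat V(\xi,\e z),
\end{equation*}
so that the hypotheses $|\la|<\e^{-\beta}$, $|\xi|<\e^{-\gamma}$ together with $\beta+\gamma<2/3$ render every coefficient of $w_\e$ small in $\e$, and $w_\e$ is an explicit $\e$-perturbation of an affine profile. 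Matching this interior profile at $y=\pm\e$ with the exterior exponential solutions $A_\e^\pm e^{-\alpha|y|}+\hat v_{\mathrm{part}}$, where $\alpha=\sqrt{(\la+d\xi^2)/d}$ has positive real part, and integrating the $v$-equation across $(-\e,\e)$, recovers the limit jump condition $-d[\partial_y\hat v]_{0^-}^{0^+}=\mub\hat u-\hat v(\cdot,0)$ modulo a quantitative $\e$-error depending polynomially on $|\la|$ and $|\xi|$.

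For high frequencies $|\xi|>\e^{-\gamma}$, the term $d\xi^2$ dominates the $y$-operator and a direct application of (\ref{boitenoire}) yields an $\e$-uniform bound $|\hat u_\e|+|\hat u|+\|\hat v_\e\|_\infty+\|\hat v\|_\infty\leq C|\xi|^{-2}(\|\hat U\|_\infty+\|\hat V\|_\infty)$. To conclude, one writes
\begin{equation*}
u_\e(x)-u(x)=\frac{1}{2\pi}\int_\R e^{ix\xi}(\hat u_\e-\hat u)(\xi)\,d\xi,
\end{equation*}
estimates by the $L^1(\xi)$-norm, and splits at $|\xi|=\e^{-\gamma}$: the interior error times the length $2\e^{-\gamma}$ of the low-frequency region, added to the $\e^\gamma$-tail integral of $|\xi|^{-2}$ from the high-frequency region, combine to produce the announced rate $\e^{1-\frac32(\beta+\gamma)}$; the analogous argument for $\hat v_\e-\hat v$ completes the estimate.

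The main obstacle is the matched-asymptotics step in the low-frequency regime: one has to track simultaneously the rescaled interior profile $w_\e$, the exterior exponentials (whose exponent $\alpha(\xi,\la)$ can be moderately large when $\la$ approaches $\e^{-\beta}$ or $\xi$ approaches $\e^{-\gamma}$), and the two matching points $y=\pm\e$, in such a way that a single explicit $\e$-rate for the recovery of the jump condition emerges and survives the final inverse-Fourier integration. It is exactly the interplay of the interior perturbation, the exterior decay, and the $\xi$-truncation threshold that forces both the restriction $\beta+\gamma<2/3$ and the exponent $1-\frac32(\beta+\gamma)$ in the statement.
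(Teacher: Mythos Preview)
Your overall architecture---Fourier in $x$, split in $\xi$, matched asymptotics on $(-\e,\e)$ for low frequencies, decay for high frequencies, then reassemble---is exactly the paper's. But the frequency threshold you chose is wrong, and this breaks both the high-frequency step and the final arithmetic.

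You cut at $|\xi|=\e^{-\gamma}$. In the regime of the proposition, $\beta>\tfrac12$ while $\gamma$ may be arbitrarily small (the only constraint is $\beta+\gamma<\tfrac23$, so $\gamma<\tfrac16$). At the threshold $|\xi|=\e^{-\gamma}$ one has $\xi^2=\e^{-2\gamma}$, whereas $|\la|$ can be as large as $\e^{-\beta}$ and $\nu_\e\sim\e^{-1}$; since $2\gamma<\beta<1$, the term $d\xi^2$ does \emph{not} dominate the $y$-operator, so the claimed bound $C|\xi|^{-2}$ via (\ref{boitenoire}) is unjustified (and (\ref{boitenoire}) alone cannot absorb the potential $\nu_\e$ anyway---the paper invokes a Kato-type inequality, Lemma~\ref{katoineq}, for exactly this reason). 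Moreover the tail $\int_{|\xi|>\e^{-\gamma}}|\xi|^{-2}\,d\xi=\e^{\gamma}$ is, for small $\gamma$, of order nearly $1$ and does not combine to the announced rate $\e^{1-\frac32(\beta+\gamma)}$.

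The fix is to cut at $\xi^2=\e^{-(\beta+\gamma)}$, i.e.\ $|\xi|=\e^{-\frac12(\beta+\gamma)}$. Then on the high-frequency side $\Re(\xi^2+\la)\ge k^2\xi^2$ for small $\e$, Lemma~\ref{katoineq} gives the $C\xi^{-2}$ bound uniformly in the $\nu_\e$ potential, and the tail integral contributes $\e^{\frac12(\beta+\gamma)}$. On the low-frequency side your matched-asymptotics sketch is correct and yields a pointwise error $O(\e^{1-(\beta+\gamma)})$ (this is the paper's Lemma~\ref{petitesfrequences}); integrating over $|\xi|<\e^{-\frac12(\beta+\gamma)}$ produces $\e^{1-\frac32(\beta+\gamma)}$, which dominates the tail since $\beta+\gamma>\tfrac12$.
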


\begin{cor}\label{corl1}
 Under assumptions of Proposition \ref{petitslambda}, 
 $$\lV\lp R(\la,L_\e)-R(\la,L_0)\rp(U,V)\rV_\infty\leq C_2\e^{1-\frac{3}{2}(\beta+\gamma)}
 \lp \Vert U\Vert_{L^1(x)}+\lV \lV V\rV_{L^\infty(y)}\rV_{L^1(x)}\rp.$$
\end{cor}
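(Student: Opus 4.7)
The corollary is an immediate consequence of Proposition \ref{petitslambda} combined with the elementary Fourier bound $\|\hat f\|_{L^\infty} \leq \|f\|_{L^1}$; there is no genuine obstacle, only the task of verifying that the two $L^1$ norms on the right-hand side control the two $L^\infty$ norms of the Fourier transforms appearing in the proposition.

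The plan is as follows. For the first term, directly from the definition of the Fourier transform in $x$,
\begin{equation*}
\bigl|\Uh(\xi)\bigr| = \Bigl|\int_\R e^{-ix\xi} U(x)\, dx\Bigr| \leq \int_\R |U(x)|\, dx = \lV U\rV_{L^1(\R)},
\end{equation*}
so that taking the supremum in $\xi$ gives $\lV \Uh\rV_{L^\infty(\R)} \leq \lV U\rV_{L^1(\R)}$. For the second term, applying the same estimate in $x$ with $y$ held fixed,
\begin{equation*}
\bigl|\Vh(\xi,y)\bigr| \leq \int_\R |V(x,y)|\, dx \leq \int_\R \sup_{y'\in\R} |V(x,y')|\, dx = \lV\, \lV V\rV_{L^\infty(y)} \rV_{L^1(x)},
\end{equation*}
and taking the supremum over $(\xi,y)\in\R^2$ gives $\lV \Vh\rV_{L^\infty(\R^2)} \leq \lV\, \lV V\rV_{L^\infty(y)} \rV_{L^1(x)}$. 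Plugging these two estimates into the conclusion of Proposition \ref{petitslambda} yields exactly the stated bound with the same constant $C_2$ and the same range of parameters $\beta,\gamma$.
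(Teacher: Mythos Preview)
Your proof is correct and is exactly the argument the paper intends: the corollary is stated immediately after Proposition~\ref{petitslambda} without a separate proof, the passage from $\lV\Uh\rV_{L^\infty}$, $\lV\Vh\rV_{L^\infty}$ to the $L^1$-type norms being nothing more than the elementary bound $|\widehat f(\xi)|\leq \lV f\rV_{L^1}$ applied in the $x$-variable. There is nothing to add.
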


The proof requires two lemmas. First, we deal with the high frequencies 
in Lemma \ref{gdesfrequences}, i.e. for $|\xi|\gg\e^{-\b},$ using Lemma \ref{katoineq} in Appendix. Then, 
in Lemma \ref{petitesfrequences}, we make
an almost explicit computation of the solutions for small values of $\la.$

\paragraph{Fourier transform of the equations}
Let us consider $U\in \MC_0(\R)\cap L^1(\R)$ and $V\in \MC_0(\R^2)\cap L^1(\R,L^\infty(\R)).$
For $\e>0$ and $\la\in\Gamma_{r,\vartheta},$ $|\la|<\e^{-\beta},$ Recall that
 \begin{equation*} 
 \begin{cases}
 (u,v)=R(\la,L_0)(U,V) \\
 (u_\e,v_\e)=R(\la,L_\e)(U,V)
 \end{cases}
 \end{equation*}
 which leads to the spectral problems (\ref{avtfourier0}) and (\ref{avtfouriereps}). The Fourier transforms $(\uh,\vh)$ and $(\uh_\e,\vh_\e)$
 solve
 \begin{equation}\label{apresfourier0}
 \begin{cases}
 (D\xd+\la+\mub) \uh(\xi) = \vh(\xi,0) + \Uh(\xi) \\
 (\xd+\la)\vh - \partial_{yy}\vh(\xi,y) = \Vh(\xi,y)  \\
 \vh(\xi,0^+)=\vh(\xi,0^-) \\
 -(\partial_y v(\xi,0^+)-\partial_y v(\xi,0^-)) = \mub \uh(\xi)-\vh(\xi,0)
 \end{cases}
 \end{equation}
 and
 \begin{equation}\label{apresfouriereps}
 \begin{cases}
 (D\xd+\la+\mub) \uh_\e(\xi) = \int\nu_\e(y)\vh_\e(\xi,y)dy + \Uh(\xi) \\
 (\xd+\la+\nu_\e(y)) \vh_\e(\xi,y) -\partial_{yy} \vh_\e(\xi,y) = \mu_\e(y)\uh_\e(\xi)+\Vh(\xi,y).
 \end{cases}
 \end{equation}
%

 \begin{lem}\label{gdesfrequences}
There exist $\e_2>0$ and a constant $C_3$ depending only on $\mub,D$ such that for all $\e<\e_2,$ for all $\xi$ with 
$\xd\geq \e^{-\beta-\gamma}$ and $\la\in\Gamma_{r,\vartheta}$ with $|\la|<\e^{-\b},$  
\begin{align*}
|\uh_\e(\xi)|+\|\vh_\e(\xi)\|_\infty\leq \frac{C_3}{\xd}\lp|\Uh(\xi)|+\|\Vh(\xi)\|_\infty\rp \\
|\uh(\xi)|+\|\vh(\xi)\|_\infty\leq \frac{C_3}{\xd}\lp|\Uh(\xi)|
+\|\Vh(\xi)\|_\infty\rp
\end{align*}
where $\|.\|_\infty=\|.\|_{L^\infty(y)}.$
 \end{lem}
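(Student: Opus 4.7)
The strategy is to treat each $y$-equation as an ODE with a large complex coefficient and to exploit the spectral gap $\xd \geq \e^{-\b-\gamma} \gg |\la| \leq \e^{-\b}$ that $\gamma>0$ forces as $\e\to 0$. Set $\a := \sqrt{\xd+\la}$ with positive real part; since the argument of $\xd+\la$ stays close to $0$ in this regime, both $|\a|$ and its real part are comparable to $|\xi|$, uniformly in $\la\in\Gamma_{r,\vartheta}$ with $|\la|<\e^{-\b}$.

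First I would bound $\lV \vh_\e(\xi,\cdot)\rV_{L^\infty(y)}$ in terms of $|\uh_\e(\xi)|$ by rewriting the second equation of (\ref{apresfouriereps}) as
\[
-\partial_{yy}\vh_\e + \a^2\vh_\e \;=\; \mu_\e(y)\uh_\e(\xi) + \Vh(\xi,y) - \nu_\e(y)\vh_\e(\xi,y).
\]
Convolving with the Green's function $G(y)=\frac{1}{2\a}e^{-\a|y|}$ of $-\partial_{yy}+\a^2$ (whose $L^\infty$ norm is $\sim|\a|^{-1}$ and $L^1$ norm is $\sim|\a|^{-2}$) yields the pointwise estimate
\[
|\vh_\e(\xi,y)| \;\leq\; \frac{C}{|\a|^2}\lV\Vh(\xi,\cdot)\rV_{L^\infty(y)} + \frac{\mub}{2|\a|}|\uh_\e(\xi)| + \frac{\nub}{2|\a|}\lV\vh_\e(\xi,\cdot)\rV_{L^\infty(y)}.
\]
The crucial feature is that $\mu_\e,\nu_\e$ enter only through their $L^1$-norms $\mub,\nub$, which are $\e$-independent; the Kato-type inequality of Lemma~\ref{katoineq} in the appendix is the clean way to handle the $\nu_\e\vh_\e$ term without paying its $L^\infty$-peak. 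Taking the supremum in $y$ and absorbing the last term on the left (valid for $|\a|$ large) produces
\[
\lV\vh_\e(\xi,\cdot)\rV_{L^\infty(y)} \;\leq\; \frac{C}{\xd}\lV\Vh(\xi,\cdot)\rV_{L^\infty(y)} + \frac{C\mub}{|\xi|}|\uh_\e(\xi)|.
\]

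Next, the first equation of (\ref{apresfouriereps}) together with the lower bound $|D\xd+\la+\mub|\geq c\xd$ (valid because $\xd\gg|\la|$) yields
\[
|\uh_\e(\xi)| \;\leq\; \frac{C'}{\xd}\lp\lV\vh_\e(\xi,\cdot)\rV_{L^\infty(y)}+|\Uh(\xi)|\rp.
\]
Substituting this in the previous inequality, the self-coupling coefficient in front of $\lV\vh_\e\rV_\infty$ on the right is of order $|\xi|^{-3}$; since $|\xi|\geq\e^{-(\b+\gamma)/2}\to +\infty$ as $\e\to 0$, it can be absorbed for $\e$ small enough, giving
\[
\lV\vh_\e(\xi,\cdot)\rV_{L^\infty(y)} \;\leq\; \frac{C_3}{\xd}\lp|\Uh(\xi)|+\lV\Vh(\xi,\cdot)\rV_{L^\infty(y)}\rp,
\]
and back-substituting produces the same $\xd^{-1}$ bound for $|\uh_\e(\xi)|$. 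For the limit system (\ref{apresfourier0}), the same two-step scheme works verbatim: $\vh$ can be written explicitly as $A(\xi)e^{-\a|y|}$ plus the Green-function convolution of $\Vh$, and the jump condition $2\a A(\xi)=\mub\uh(\xi)-\vh(\xi,0)$ supplies precisely the $\frac{1}{|\xi|}|\uh|$ contribution that $\lV\mu_\e\uh_\e\rV_{L^1(y)}=\mub|\uh_\e|$ provided in the nonlocal case.

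The main technical obstacle, and the reason the parameter $\gamma$ is introduced, is uniform-in-$\e$ control despite $\lV\mu_\e\rV_{L^\infty},\lV\nu_\e\rV_{L^\infty}=O(\e^{-1})$: any naive $L^\infty$ estimate on the second equation would blow up. The rescue comes from convolving against $G$, which sees only $L^1$-norms of the exchange functions (independent of $\e$), and from exploiting the sign of $\nu_\e$ to neutralize the potential term. Both features are precisely what Lemma~\ref{katoineq} packages, and isolating them is essentially the whole content of the proof.
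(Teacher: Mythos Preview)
Your argument is correct and follows essentially the same route as the paper: both proofs hinge on the Kato-type inequality (Lemma~\ref{katoineq}) to trade the $L^\infty$-peak of $\mu_\e,\nu_\e$ for their $\e$-independent $L^1$-norms, together with the spectral gap $\xd\gg|\la|$ that makes $|D\xd+\la+\mub|\gtrsim\xd$ and $\Re{\xd+\la}\gtrsim\xd$. The only organizational difference is that the paper substitutes $\uh_\e=\o^{-1}\bigl(\int\nu_\e\vh_\e+\Uh\bigr)$ into the $\vh_\e$-equation first and keeps $\nu_\e$ on the left as part of the potential $m(y)=\xd+\la+\nu_\e(y)$ (using $\nu_\e\geq0$ so that $\Re m\geq k^2\xd$), whereas you keep $\uh_\e$ and move $\nu_\e\vh_\e$ to the right as a source, absorbing it afterwards via the $\frac{\nub}{2|\a|}$ factor; both variants close in the same way.
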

 
\begin{proof}
We give the proof only for the nonlocal case, the local one being easier. 
Combining the two equations of (\ref{apresfouriereps}), $\vh_\e$ satisfies
\begin{equation}\label{apresfouriereps2}
 -\partial_{yy}\vh_\e(\xi,y)+\lp\xi^2+\la+\nu_\e(y)\rp\vh_\e(\xi,y)=\Vh(\xi,y)+\frac{\mu_\e(y)}{D\xi^2+\la+\mub}\lp\vh(\xi,0)+\Uh(\xi)\rp.
\end{equation}
As $\gamma>0$ and considering the hypotheses on $\la$ and $\xi,$ there exists $k>0$ such that, for $\e$ small enough, we have:
\begin{equation*}
 \min \left\{ \Re{\xi^2+\la},\lb D\xi^2+\la+\mub\rb\right\} >k^2\xi^2.
\end{equation*}
Now, we apply Lemma \ref{katoineq} in Appendix. It gives:
\begin{equation*} 
 \lb\vh_\e(\xi,y)\rb  \leq \frac{1}{2k|\xi|}\int_\R e^{-k\xi|z|}\lp\Vh(\xi,z-y)+\frac{\mu_\e(z-y)}{k^2\xi^2}\lp \vh(\xi,0)+\Uh(\xi)\rp \rp dz .
\end{equation*}
A rough majoration yields
\begin{equation*}
  \lV\vh_\e(\xi)\rV_\infty  \leq \frac{1}{k^2\xi^2}\lV\Vh(\xi)\rV_\infty+\frac{\mub}{k^3|\xi|^3}\lp\lb\vh(\xi,0)\rb+\lb\Uh(\xi)\rb\rp
\end{equation*}
which, as $|\xi|>\e^{-\frac{1}{2}(\b+\gamma)},$ provides the desired estimate on $\lV\vh_e(\xi)\rV_\infty.$ The estimate on $|\uh_\e|$ follows from
the first equation of (\ref{apresfouriereps}), and the proof of Lemma \ref{gdesfrequences} is concluded.
\end{proof}

Now that we are done with the high frequencies, to finish the proof of Proposition \ref{petitslambda},
it remains to control the lower frequencies. This is the purpose of the following Lemma.
 
\begin{lem}
\label{petitesfrequences}
There exist $\e_3>0$ and a constant $C_5$, for all $\e<\e_3,$ for all $\la\in\Gamma_{r,\vartheta}$ with $|\la|<\e^{-\beta},$ 
for all $\xi$ with $\xd<\e^{-(\beta+\gamma)},$
\begin{align*}
\|\vh_\e(\xi)-\vh(\xi)\|_{L^\infty(y)} & \leq C_5 \e^{1-(\beta+\gamma)}\lp|\Uh(\xi)|+\|\Vh(\xi)\|_{L^\infty(y)}\rp \\
\lb\uh_\e(\xi)-\uh(\xi)\rb & \leq C_5 \e^{1-(\beta+\gamma)}\lp|\Uh(\xi)|+\|\Vh(\xi)\|_{L^\infty(y)}\rp.
\end{align*}
\end{lem}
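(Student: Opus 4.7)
The idea is to use the Green's function of $-\partial_{yy}+\omega^2$ on $\R$ to reduce the comparison to a $2\times 2$ algebraic system, and then to control its right-hand side via an inner-layer analysis on $(-\e,\e)$.

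Set $\omega := \sqrt{\xi^2+\la}$ with $\Re\omega>0$; this is well defined since $\xi^2+\la$ remains in the sector $S_{\theta,\vp+r}$ for $\xi\in\R$ and $\la\in\Gamma_{r,\vartheta}$. Let $G_\omega(y) := e^{-\omega|y|}/(2\omega)$ be the fundamental solution of $-\partial_{yy}+\omega^2$ on $\R$. Interpreting the transmission condition in (\ref{apresfourier0}) as a Dirac source, and writing $D_\e := \hat v_\e-\hat v$, $E_\e := \hat u_\e-\hat u$, the subtraction of the two field equations yields
\begin{equation*}
(-\partial_{yy}+\omega^2)D_\e = [\mub E_\e - D_\e(0)]\delta_0 + R_\e,\qquad R_\e := (\mu_\e-\mub\delta_0)\hat u_\e-(\nu_\e-\delta_0)\hat v_\e.
\end{equation*}
Convolving with $G_\omega$, evaluating at $y=0$, and also subtracting the two road equations produces the $2\times 2$ system
\begin{equation*}
(2\omega+1)\,D_\e(0) - \mub E_\e = 2\omega\,(G_\omega * R_\e)(0),\qquad -D_\e(0) + (D\xi^2+\la+\mub)E_\e = T_\e,
\end{equation*}
with $T_\e := \int(\nu_\e-\delta_0)\hat v_\e\,dy$. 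Its determinant $\Delta_0(\xi,\la) := (2\omega+1)(D\xi^2+\la+\mub)-\mub$ is uniformly bounded away from $0$ on $\Gamma_{r,\vartheta}$ thanks to the same sectorial choice $\vp>3\mub$, $\vartheta\in(\pi/2,3\pi/4)$ that underlies Proposition \ref{sectorial}.

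Both $T_\e$ and $(G_\omega*R_\e)(0)$ involve the zero-mean measures $\nu_\e-\delta_0$ and $\mu_\e-\mub\delta_0$ tested against $\hat v_\e$ and $\hat u_\e$, and can be rewritten as oscillation-type integrals (for $T_\e$: $\int\nu_\e(y)[\hat v_\e(y)-\hat v_\e(0)]\,dy$; and similarly for $G_\omega*R_\e$, using $|\partial_y G_\omega|\leq 1/2$). To estimate these, I rescale $z := y/\e$, $W_\e(z) := \hat v_\e(\xi,\e z)$, which solves
\begin{equation*}
-W_\e''(z)+\bigl(\e^2\omega^2+\e\,\nu(z)\bigr)W_\e(z) = \e\,\mu(z)\,\hat u_\e + \e^2\hat V(\xi,\e z)\quad\text{on }(-1,1),
\end{equation*}
with boundary values at $z=\pm 1$ matching the outer decaying exponentials. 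Since $\e^2|\omega|^2\leq\e^{2-(\b+\gamma)}$ stays small under the hypothesis $\b+\gamma<2$, integrating as in Lemma \ref{borneuniformephi} controls $\|W_\e'\|_{L^\infty(-1,1)}$, and consequently the oscillation of $\hat v_\e$ on $(-\e,\e)$, by $\e\,(1+\e^2|\omega|^2)\bigl(\|\hat v_\e\|_\infty+|\hat u_\e|+\|\hat V\|_\infty\bigr)$. Inverting the $2\times 2$ system above with the a priori bound $|\hat u_\e(\xi)|+\|\hat v_\e(\xi)\|_\infty\leq C(|\hat U(\xi)|+\|\hat V(\xi)\|_\infty)$ (already known in this regime from Proposition \ref{sectorial}) then produces
\begin{equation*}
|E_\e|+|D_\e(0)|\leq C\,\e\,(1+|\omega|^2)\bigl(|\hat U(\xi)|+\|\hat V(\xi)\|_{L^\infty(y)}\bigr),
\end{equation*}
the $|\omega|^2$ being the worst factor coming from the algebra. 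Recovering $D_\e(y)$ from its Green's function representation absorbs one power of $|\omega|$ via $|G_\omega(y)|\leq 1/(2|\omega|)$; the remaining factor, combined with $|\omega|^2\leq\xi^2+|\la|\leq C\e^{-(\b+\gamma)}$, yields the claimed rate $O(\e^{1-(\b+\gamma)})$.

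\textbf{Main obstacle.} The delicate step is the inner-layer bookkeeping: one must track the oscillation of $\hat v_\e$ on $(-\e,\e)$ so that the total loss in powers of $|\omega|$ after the algebraic inversion is at most $|\omega|^2$. The $\e^2\omega^2$ term in the rescaled equation for $W_\e$ is precisely what limits the regime, and remains subcritical only under the assumption $\b+\gamma<2$. A secondary, easier point is verifying that $|\Delta_0(\xi,\la)|$ is uniformly bounded below on $\Gamma_{r,\vartheta}$, which is a direct consequence of the sectorial framework and of $D\xi^2+\la+\mub$ staying in a sector strictly away from $0$.
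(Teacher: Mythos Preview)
Your Green's-function framework is the paper's approach repackaged: the paper's constants $K_2^\pm(\e)$ play the role of your $D_\e(0)$, your $2\times 2$ system for $(D_\e(0),E_\e)$ is equivalent to the paper's comparison of (\ref{calculk2fin}) with (\ref{k20}), and the inner-layer rescaling $z=y/\e$ together with the integration bound on $W_\e'$ is identical.

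The genuine gap is the a priori bound $|\hat u_\e(\xi)|+\|\hat v_\e(\xi)\|_{L^\infty(y)}\leq C\bigl(|\hat U(\xi)|+\|\hat V(\xi)\|_{L^\infty(y)}\bigr)$ with $C$ independent of $\e$, which you say is ``already known from Proposition~\ref{sectorial}.'' That proposition gives sectoriality in $X=\mathcal C_0(\R)\times\mathcal C_0(\R^2)$, not a $\xi$-fiberwise estimate, and its constant $M_\e$ is not claimed uniform in $\e$. The uniform fiberwise boundedness is exactly what the paper establishes \emph{inside} the proof: it first shows (equivalence (\ref{borneK2})) that $\|\hat v_\e\|_\infty$ blows up iff $|K_2^\pm(\e)|$ does, then reads off from (\ref{calculk2fin}) an inequality of the shape $|K_2^\pm(\e)|\cdot(\text{bounded below})\leq\text{data}+C\e^{1-(\b+\gamma)}\|\hat v_\e\|_\infty$, and bootstraps to obtain boundedness before extracting the difference estimate. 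Your $2\times 2$ system has the same structure---its right side is data plus a term of size $O(\e^{1-(\b+\gamma)})\|\hat v_\e\|_\infty$---so the same bootstrap would close it, but you must carry it out rather than import a bound that is not available. A secondary remark: the power-counting you flag as the main obstacle (``$|\omega|^2$ the worst factor, then absorb one power via $G_\omega$'') is heuristic; the actual dominant contributions are the boundary derivative $W_\e'(\pm 1)=O(\e|\omega|)$ and the $\e^2\omega^2$ term in the rescaled equation, and the paper simply tracks every term to the common crude bound $\e^{1-(\b+\gamma)}$.
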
 

\paragraph{Proof of Proposition \ref{petitslambda} thanks to Lemma \ref{petitesfrequences}}
With the same notations,
\begin{equation*}
 \lV\lp R(\la,L_\e)-R(\la,L_0)\rp(U,V)\rV_\infty=\max \lp\lV u-u_\e\rV_{L^\infty(\R)},\lV v-v_\e\rV_{L^\infty(\R^2)}\rp.
\end{equation*}
Let us prove the domination for $\lV v-v_\e\rV_{L^\infty(\R^2)},$ the one in $(u-u_\e)$ being similar. For $(x,y)\in\R^2,$
\begin{align}
 (v-v_\e)(x,y) & = \frac{1}{2\pi}\int_\R e^{ix.\xi}\lp \vh(\xi,y)-\vh_\e(\xi,y)\rp d\xi. \nonumber \\
 \lb (v-v_\e)(x,y)\rb & \leq \frac{1}{2\pi}\int_\R \lb \vh(\xi,y)-\vh_\e(\xi,y)\rb d\xi  \nonumber \\
 & \leq \frac{1}{2\pi} \int_{|\xi|\geq\e^{-\frac{1}{2}(\b+\gamma)}} \lV \vh(\xi)\rV_{L^\infty(y)}+\lV \vh_\e(\xi)\rV_{L^\infty(y)} d\xi \label{controle1} \\
 & + \frac{1}{2\pi} \int_{|\xi|<\e^{-\frac{1}{2}(\b+\gamma)}} \lV \vh(\xi)-\vh_\e(\xi)\rV_{L^\infty(y)} d\xi. \label{controle2}
\end{align}
Now, from Lemma \ref{gdesfrequences}, we have:
\begin{align*}
 (\ref{controle1}) & \leq \frac{C_3}{\pi}\lp |\Uh(\xi)|+\|\Vh(\xi)\|_\infty\rp \int_{\e^{-\frac{1}{2}(\b+\gamma)}}^{+\infty} \frac{d\xi}{\xd} \\
  & \leq  \frac{C_3}{\pi}\e^{\frac{1}{2}(\b+\gamma)}\lp |\Uh(\xi)|+\|\Vh(\xi)\|_\infty\rp.
\end{align*}

From Lemma \ref{petitesfrequences}, we have:
\begin{align*}
 (\ref{controle2}) & \leq \frac{C_5}{2\pi}\e^{1-(\beta+\gamma)}\lp|\Uh(\xi)|+\|\Vh(\xi)\|_{L^\infty(y)}\rp 
 \int_{-\e^{-\frac{1}{2}(\b+\gamma)}}^{\e^{-\frac{1}{2}(\b+\gamma)}} 1 d\xi \\
  & \leq \frac{C_5}{\pi}\e^{1-\frac{3}{2}(\beta+\gamma)}\lp|\Uh(\xi)|+\|\Vh(\xi)\|_{L^\infty(y)}\rp.
\end{align*}
Finally, from the choice of $\b,\gamma,$ $\frac{\b+\gamma}{2}>1-\frac{3}{2}(\b+\gamma)$ and we have the required estimate.
\qed

\subsection*{Proof of Lemma \ref{petitesfrequences}}
The proof requires some explicit computations of the solutions of the spectral problems and is a bit long. First, we compute
$(\uh,\vh),$ solution of (\ref{apresfourier0}), introducing four constants $K_1^+,$ $K_2^+,$ $K_1^-,$ $K_2^-$.
Secondly, we do the same for $(\uh_\e,\vh_\e)$ outside the strip $\R\times(-\e,\e).$
This leads us to introduce four constants $K_1^+(\e),$ $K_2^+(\e),$ $K_1^-(\e),$ and $K_2^-(\e)$ which determine the behaviour 
of $\vh_\e$ outside the strip. Then, we show that it is enough to focus on $K^\pm_2(\e)$ to get a global control of $\vh_\e.$
In a short paragraph, we establish that the derivative $\partial_y \vh_\e$ is controlled by the norm of $\vh_\e.$ The last paragraph of the proof
is devoted to the computation of $K^\pm_2(\e)$ and an estimate of its difference with $K_2^\pm.$

\paragraph{Explicit computation of $(\uh,\vh)$}
	Our choice of $\Gamma_{r,\vartheta}$ allows us to choose a unique determination of the complex logarithm for all systems. From now and until the end of
	this proof we set for all $\xi$ and $\la$ satisfying the hypotheses of Lemma \ref{petitesfrequences}
	\begin{equation}
	\label{alpha}
	\a:=\sqrt{\xd+\la},\ \Re\a>0
	\end{equation}
	the unique complex root of $(\xd+\la)$ with positive real part and 
	\begin{equation}
	\label{omega}
	\o:=D\xd+\mub+\la.
	\end{equation}
	The choice of $\vp,\theta$ (see the hypotheses of Proposition \ref{sectorial})
	yields $\displaystyle \min_{\la\in\Gamma_{r, \vartheta}} |\la|>\max (\sqrt{2},2\mub).$  
	Moreover, we have $\xd>0$ and $D\xd+\mub>0.$ Then, 
	\begin{equation}
	\forall \las\in \Gamma_{r, \vartheta},\ \max (\sqrt{2},2\mub) < \min_{\la\in\Gamma_{r, \vartheta}} |\la| \leq \min (D\xd+\mub+\las,\xd+\las).
	\end{equation}
	Hence, we can assert:
	\begin{equation}\label{controlomega}
	\forall\la\in\Gamma_{r, \vartheta},\xi\in\R, \ 2< 2+\frac{1}{\a}\lp 1-\frac{\mub}{\o}\rp<2+2^{-\frac{1}{4}}+\frac{1}{\sqrt{2\mub}}.
	\end{equation}
	Moreover, considering the hypotheses on $\xi$ and $\la,$ there exists a constant $k>0$ such that, for $\e$ small enough,
	\begin{equation}\label{controlalpha}
	 1<|\a|<k\e^{-\frac{1}{2}(\b+\gamma)},\qquad 2\mub\leq|\o|<k\e^{-(\b+\gamma)},\qquad |e^{\e\a}|<2.
	\end{equation}

	The first equation of (\ref{apresfourier0}) gives 
	$$
	\uh(\xi)=\frac{1}{\o}\lp\vh(\xi,0)+\Uh(\xi)\rp.
	$$
	Integrating the second equation of (\ref{apresfourier0}) yields the existence of four 
	constants $K_1^+,$ $K_2^+,$ $K_1^-,$ $K_2^-$, depending on $\xi,$ such that
	\begin{equation}\label{vh0}
	\left\{
	\begin{array}{ll}
	\vh(\xi,y) =  e^{\a y}\lp K_1^+ -\frac{1}{2\a}\int_0^y e^{-\a z}\Vh(\xi,z)dz\rp +
	e^{-\a y}\lp K_2^+ +\frac{1}{2\a}\int_0^y e^{\a z}\Vh(\xi,z)dz\rp & y>0 \\
	\vh(\xi,y) =  e^{-\a y}\lp K_1^- -\frac{1}{2\a}\int_y^0 e^{\a z}\Vh(\xi,z)dz\rp +
	e^{\a y}\lp K_2^- +\frac{1}{2\a}\int_y^0 e^{-\a z}\Vh(\xi,z)dz\rp & y<0.
	\end{array}
	\right.
	\end{equation}
	The integrability of $\vh$ in $y$ gives
	\begin{equation}
	\label{k10}
	K_1^+=\frac{1}{2\a}\int_0^\infty e^{-\a z}\Vh(\xi,z)dz, \qquad
	K_1^-=\frac{1}{2\a}\int_{-\infty}^0 e^{\a z}\Vh(\xi,z)dz.
	\end{equation}
	The continuity and exchange conditions at $y=0$ impose 
	\begin{equation}
	\begin{cases}
	K_1^+ +K_2^+ = K_1^- +K_2^- \\
	\a \lp K_2^+-K_1^++K_2^--K_1^-\rp = \frac{\mub}{\o}\lp K_1^++K_2^++\Uh(\xi)\rp-K_1^+-K_2^+.
	\end{cases}
	\end{equation}
	Combining these two equations yields
	\begin{equation}\label{k20}
	\begin{cases}
	K_2^+\lp 2+\frac{1}{\a}\lp 1-\frac{\mub}{\o}\rp\rp = 2K_1^-+K_1^+\frac{1}{\a}\lp\frac{\mub}{\o}-1\rp+\frac{\mub}{\a\o}\Uh(\xi) \\
	K_2^-\lp 2+\frac{1}{\a}\lp 1-\frac{\mub}{\o}\rp\rp = 2K_1^++K_1^-\frac{1}{\a}\lp\frac{\mub}{\o}-1\rp+\frac{\mub}{\a\o}\Uh(\xi).
	\end{cases}
	\end{equation}
	From (\ref{controlomega}), the above system (\ref{k20}) is well-posed.
	From (\ref{vh0}), (\ref{k10}) and (\ref{k20}) we have an explicit formula for $(\uh(\xi),\vh(\xi)).$

	\paragraph{Study of $(\uh_\e,\vh_\e)$}
	\subparagraph{Explicit formula} In the same way as above, the first equation of (\ref{apresfouriereps}) yields
	$$
	\uh_\e(\xi)=\frac{1}{\o}\lp\int_{\R}\nu_\e(y)\vh_\e(\xi,y)dy+\Uh(\xi)\rp.
	$$
	Integrating the second equation of (\ref{apresfouriereps}) leads us to set four constants 
	$K_1^+(\e),$ $K_2^+(\e),$ $K_1^-(\e),$ $K_2^-(\e)$, depending on $\e$ and $\xi$,
	such that
	\begin{align}
	y>\e: \ \vh_\e(\xi,y)  = & e^{\a y}\lp K_1^+(\e) -\frac{1}{2\a}\int_\e^y e^{-\a z}\Vh(\xi,z)dz\rp + \nonumber \\\label{expliciteps1}
	& e^{-\a y}\lp K_2^+(\e) +\frac{1}{2\a}\int_\e^y e^{\a z}\Vh(\xi,z)dz\rp  \\
	y<-\e: \ \vh_\e(\xi,y)  = & e^{-\a y}\lp K_1^-(\e) -\frac{1}{2\a}\int_y^{-\e} e^{\a z}\Vh(\xi,z)dz\rp + \nonumber \\\label{expliciteps2}
	& e^{\a y}\lp K_2^-(\e) +\frac{1}{2\a}\int_y^{-\e} e^{-\a z}\Vh(\xi,z)dz\rp .
	\end{align}
	For the same integrability reason as in the limit case, we already have an explicit
	formula for $K_1^\pm(\e):$
	\begin{equation}\label{k1eps}
	K_1^+(\e)=\frac{1}{2\a}\int_\e^\infty e^{-\a z}\Vh(\xi,z)dz, \qquad
	K_1^-(\e)=\frac{1}{2\a}\int_{-\infty}^{-\e} e^{\a z}\Vh(\xi,z)dz,
	\end{equation}
	which immediately gives us a uniform boundary and, combining with (\ref{k10}), the
	first following estimate
	\begin{equation}\label{estimeek1}
	\lb K_1^\pm-K_1^\pm(\e)\rb\leq \frac{\e}{2\a}\lV\Vh(\xi)\rV_{L^\infty(y)}.
	\end{equation}
	It remains to determine $K_2^\pm(\e).$ We set
	\begin{equation}\label{newvariable}
	 z=\frac{y}{\e} \textrm{ and } \vh_\e(z):=\vh_\e(y) \textrm{ for }z\in(-1,1).
	\end{equation}
	The equation for $\vh_\e(\xi,z)$, now set for $z\in(-1,1),$ is
	\begin{equation}\label{eqeps}
	(\e^2\xd+\e^2\la+\e\nu(z)) \vh_\e(\xi,z) -\partial_{zz} \vh_\e(\xi,z) = 
	\e\mu(z)\frac{1}{\o}\lp\int_{\R}\nu(z)\vh_\e(\xi,z)dz+\Uh(\xi)\rp+\e^2\Vh(\xi,\e z).
	\end{equation}
	Specifying (\ref{expliciteps1}) and (\ref{expliciteps2}) at $y=\e$
	and $y=-\e$ gives us the two following boundary conditions for (\ref{eqeps}):
	\begin{equation}\label{dirichleteps}
	\begin{cases}
	\vh_\e(1,\xi) &=K_1^+(\e)e^{\a\e}+K_2^+(\e)e^{-\a\e} \\
	\vh_\e(-1,\xi) &=K_1^-(\e)e^{\a\e}+K_2^-(\e)e^{-\a\e}.
	\end{cases}
	\end{equation}
	\begin{equation}\label{neumaneps}
	\begin{cases}
	\partial_z \vh_\e(1,\xi)& =\e\a\lp K_1^+(\e)e^{\a\e}-K_2^+(\e)e^{-\a\e}\rp \\
	\partial_z \vh_\e(-1,\xi)& =\e\a\lp K_2^-(\e)e^{-\a\e}-K_1^-(\e)e^{\a\e}\rp.
	\end{cases}
	\end{equation}
	
	\subparagraph{Blow-up condition for $\vh_\e$} From now, 
	we are only considering the rescaled equation (\ref{eqeps}) with the boundary conditions
	(\ref{dirichleteps}) and (\ref{neumaneps}). Hence, all functions and derivatives
	are to be considered in these rescaled variables (\ref{newvariable}). 
	We first show that the $L^\infty(z)-$norm of $\vh_\e$ is controlled by $K_2^\pm(\e).$
	We have: 
	\begin{align*}
	\vh_\e(z)-\vh_\e(-1) &= (z+1)\vh_\e'(-1)+\int_{-1}^{z}\int_{-1}^{s}\vh_\e''(u)duds \\
	&= (z+1)\vh_\e'(-1)+\int_{-1}^{z}\int_{-1}^{s}\vh_\e(u)\lp \e\nu(u)+\e^2\xd+\e^2\la\rp duds \\
	& - \int_{-1}^{z}\int_{-1}^{s}\e\frac{\mu(u)}{\o}\lp \int\nu\vh_\e+\Uh(\xi)\rp+\e^2\Vh(\xi,\e u)duds.
	\end{align*}
	Hence
	\begin{align*}
	\|\vh_\e\|_{L^\infty(-1,1)}  \leq & \lb\vh_\e(-1)\rb+2\lb\vh_\e'(-1)\rb+\e\|\vh_\e\|_\infty\lp 2+\frac{2\mub}{|\o|}+4\e\lb\xd+\la\rb\rp \\
	& +\e\frac{2\mub}{|\o|}\lb\Uh(\xi)\rb+4\e^2\|\Vh(\xi)\|_\infty \\
	\leq & \lp|K_1^-(\e)|+|K_2^-(\e)|\rp(2+4|\e\a|)+\e\|\vh_\e\|_\infty\lp 4+4k^2\e^{1-(\b+\gamma)}\rp \\
	& +2\e\lb\Uh(\xi)\rb+4\e^2\|\Vh(\xi)\|_\infty.
	\end{align*}
	Now let us recall that $K_1^-(\e)$ is uniformly bounded in $\e$ from (\ref{estimeek1}), 
	we have $\b+\gamma<1,$ and $\xi\mapsto(\Uh(\xi),\Vh(\xi))$ is uniformly bounded. 
	These facts, combined with the above inequality and the symmetry of the problem, allow us to assert that
	\begin{equation}\label{borneK2}
	\lp\underset{\e\to0}{\lim\sup}\|\vh_\e\|_{L^\infty(y)}=+\infty \rp
	\Leftrightarrow
	\lp\underset{\e\to0}{\lim\sup}\lb K_2^-(\e)\rb=+\infty \rp
	\Leftrightarrow
	\lp\underset{\e\to0}{\lim\sup}\lb K_2^+(\e)\rb=+\infty \rp
	\end{equation}
	 uniformly in $\xi\in\R.$
	
	\subparagraph{Control of the derivative}
	In the same way as above we get a control of $\|\vh_\e'\|_\infty$ by $\|\vh_\e\|_\infty$ with
	a simple integration of (\ref{eqeps}):
	\begin{align*}
	\vh_\e'(z)-\vh_\e'(-1) = & \e\int_{-1}^{z}\vh_\e(s)\lp \nu(s)+\e\xd+\e\la\rp-\frac{\mu(s)}{\o}\lp\int\nu\vh_\e+\Uh(\xi)\rp-\e\Vh(\xi,\e s)ds \\
	\lb \vh_\e'(s)-\vh_\e'(-1)\rb \leq & \e\|\vh_\e\|_\infty\lp 2+k^2\e^{1-(\b+\gamma)})\rp+\e\lp |\Uh(\xi)|+4\e\|\Vh(\xi)\|_\infty\rp.
	\end{align*}
	So for $\e$ small enough,
	\begin{equation}\label{controlderivee}
	\|\vh_\e'\|_{L^\infty(-1,1)}\leq 8\e\lp\|\vh_\e\|_\infty+1\rp+4\e\lp |\Uh(\xi)|+\e\|\Vh(\xi)\|_\infty\rp.
	\end{equation}
	
	\subparagraph{Explicit computation of $K_2^\pm(\e)$}We are now ready to prove
	Lemma \ref{petitesfrequences}. The only argument we will use is, once again, an 
	integration of (\ref{eqeps}). It yields:
	\begin{align*}
	\vh_\e(1)-\vh_\e(-1) = & 2\vh_\e'(-1)+\e\int_{-1}^1\int_{-1}^z\vh_\e(s)\lp\nu(s)+\e\xd+\e\la\rp dsdz \\
	& -\e\int_{-1}^1\int_{-1}^z\frac{\mu(s)}{\o}\lp\int\nu\vh_\e+\Uh(\xi)\rp+\e\Vh(\xi,\e s)dsdz.
	\end{align*}
	Hence, with (\ref{dirichleteps}) and the estimate (\ref{controlalpha}) on $\a$, we have
	\begin{multline}\label{calculk2}
	\Big\vert e^{-\a\e}\lp K_2^+(\e)-K_2^-(\e)\rp - e^{\a\e}\lp K_1^-(\e)-K_1^+(\e)\rp \Big\vert \\
	\leq 2\lb\vh_\e'(-1)\rb +\e\lV\vh_\e\rV_\infty\lp 6+4k^2\e^{1-(\b+\gamma)}\rp 
	     +\e\lp 4\lb\Uh(\xi)\rb+4\e\lV\Vh(\xi)\rV_\infty\rp. 
	\end{multline}
	In the same fashion,
	\begin{equation*}
	\vh_\e'(1)-\vh_\e'(-1)=\e\int_{-1}^1 \vh_\e(z)\lp\nu(z)+\e(\xd+\la)\rp-\frac{\mu(z)}{\o}\lp\int\nu\vh_\e+\Uh\rp-\Vh(\e z) dz. 
	\end{equation*}
	Hence, using (\ref{neumaneps}),
	\begin{multline}\label{calculK21}
	\lb \a e^{\a\e}\lp K_1^+(\e)+K_1^-(\e)\rp -\a e^{-\a\e}\lp K_2^+(\e)+K_2^-(\e)\rp - 
	\lp 1-\frac{\mub}{\o}\rp\int\nu\vh_\e + \frac{\mub}{\o}\Uh(\xi)\rb  \\ 
	\leq 2k^2\e^{1-(\b+\gamma)}\lV\vh_\e\rV_\infty+2\e\lV\Vh(\xi)\rV_\infty. 
	\end{multline}
	Now we just do a Taylor-Lagrange expansion: for all $\e,\xi,\la,$ there exists a function $c:[-1,1]\mapsto[-1,1]$ such that
	\begin{equation}\label{taylornuv}
	\int_{-1}^{1}\nu(z)\vh_\e(z)dz = \lp K_1^+(\e)e^{\a\e}+K_2^+(\e)e^{-\a\e}\rp+\int_{-1}^{1}\vh_\e'(c(z))(z-1)\nu(z)dz.
	\end{equation}
	Using (\ref{taylornuv}) in (\ref{calculK21}) gives
	\begin{multline}\label{calculk22}
	  \Big\vert e^{-\a\e}K_2^+(\e)\lp\a+\lp 1-\frac{\mub}{\o}\rp\rp + \a e^{-\a\e}K_2^-(\e) 
	+ e^{\a\e}K_1^+(\e)\lp\lp 1-\frac{\mub}{\o}\rp-\a\rp \\ - \a e^{\a\e}K_1^-(\e)  
	 - \frac{\mub}{\o}\Uh(\xi) \Big\vert  \leq 2\lV\vh'_\e\rV_\infty + 2k^2\e^{1-(\b+\gamma)}\lV\vh_\e\rV_\infty+2\e\lV\Vh(\xi)\rV_\infty.
	\end{multline}
	At this point, we have a system of two inequations (\ref{calculk2}) and (\ref{calculk22}) which will allow us to compute 
	an approximation of $K_2^+(\e)$ and $K_2^-(\e).$ We will give details only for $K_2^+(\e),$ the other case being similar. 
	Let us consider $e^{\a\e}\left[(\ref{calculk2})-\frac{1}{\a}(\ref{calculk22})\right],$ still using (\ref{controlalpha}). 
	This reads:
	\begin{multline*}
	\lb K_2^+(\e)\lp 2+\frac{1}{\a}\lp 1-\frac{\mub}{\o}\rp\rp - 2e^{2\a\e}K_1^-(\e) - e^{2\a\e}K_1^+(\e)\frac{1}{\a}\lp\frac{\mub}{\o}-1\rp
	- e^{\a\e}\frac{\mub}{\a\o}\Uh(\xi) \rb  \\ \leq
	 4\lb\vh_\e'(-1)\rb +\e\lV\vh_\e\rV_\infty\lp 6+4k^2\e^{1-(\b+\gamma)}\rp  
	     +2\e\lp 4\lb\Uh(\xi)\rb+4\e\lV\Vh(\xi)\rV_\infty\rp + \\
	     4\lV\vh'_\e\rV_\infty + 4k^2\e^{1-(\b+\gamma)}\lV\vh_\e\rV_\infty+4\e\lV\Vh(\xi)\rV_\infty.
	\end{multline*}
	Let us recall that 
	(\ref{controlderivee}) gives us a control of $\|\vh_\e'\|_{L^\infty(z)}$ 
	by $\e\|\vh\|_{L^\infty(z)}$ in the strip $[-1,1].$ 
	Thus, for some constant $C_6,$ we have:
	\begin{multline}\label{calculk2fin}
	 \lb K_2^+(\e)\lp 2+\frac{1}{\a}\lp 1-\frac{\mub}{\o}\rp\rp - 2K_1^-(\e) - K_1^+(\e)\frac{1}{\a}\lp\frac{\mub}{\o}-1\rp
	- \frac{\mub}{\a\o}\Uh(\xi) \rb  \\ \leq
	C_6\lp |\e\a|\lp |K_1^+(\e)|+|K_1^-(\e)|\rp+\e^{1-(\b+\gamma)}\lV\vh_\e\rV_\infty+\e\lb\Uh(\xi)\rb+\e\lV\Vh(\xi)\rV_\infty\rp.
	\end{multline}

	The last expression (\ref{calculk2fin}) 
	combined with the control of $\lV\vh_\e\rV_\infty$ by $K_2^+(\e)$ given in (\ref{borneK2}) allows us to assert that
	$\lp\|z\mapsto \vh_\e(\xi,z)\|_{L^\infty(-1,1)}\rp_\e$ is uniformly bounded on $\e,\xi,\la$ 
	under assumptions of Lemma \ref{petitesfrequences}, and so is 
	$\lp\|y\mapsto \vh_\e(\xi,y)\|_{L^\infty(\R)}\rp_\e$ with (\ref{expliciteps1}) and 
	(\ref{expliciteps2}). Comparing (\ref{calculk2fin}) with (\ref{k20}) and using the previous estimate 
	(\ref{estimeek1}) and the explicit formula for $K_1^\pm(\e)$ yields, for some constant $C_7:$
	\begin{equation}\label{estimeek2}
	\lb K_2^+(\e)-K_2^+\rb\leq C_7\e^{1-\b-\gamma}\lp\|\vh(\xi)\|_{L^\infty(y)}+\lb\Uh(\xi)\rb
	+\|\Vh(\xi)\|_{L^\infty(y)}\rp.
	\end{equation}
	Now we are done with the rescaled variables.
	To conclude the proof of Lemma \ref{petitesfrequences}, 
	we compute directly the difference from (\ref{vh0}) and (\ref{expliciteps1}), (\ref{expliciteps2}). 
	We have the explicit formulas (\ref{k10}) and (\ref{k1eps}), the other terms being treated by (\ref{estimeek2}). 
	As for our previous estimate (\ref{estimeek2}), we will only focus 
	on the case $y>0,$ the other one $y<0$ being similar. All in all, we have
	\begin{align*}
	\lb\vh_\e(\xi,y)-\vh(\xi,y)\rb & \leq e^{-\a y}\lb K_2^+(\e)-K_2^+\rb+
	\frac{1}{|2\a|}\int_0^{\e}\lb e^{\a(z-y)}\Vh(\xi,z)\rb dz \\
	& \leq C_7\e^{1-\b-\gamma}\lp\|\vh(\xi)\|_\infty+|\Uh(\xi)|
	+2\|\Vh(\xi)\|_\infty\rp
	\end{align*}
	 and the proof of Lemma \ref{petitesfrequences} is finished. 
\qed

\section{Finite time convergence}\label{sectionconvsol}
In this section, we finish the proof of Theorem \ref{convsol}. The first ingredient
is Proposition \ref{cordif} which is a corollary of our estimates on the resolvents. It gives a control
of the semigroups generated by the linear operators. The second ingredient is 
Lemma \ref{decroissancesolutions} which gives some $L^\infty$ and decay estimates on the solutions.
At last, we will use these two ingredients in a Gronwall argument to deal with the nonlinearity.

\subsection{Difference between the semigroups}
Combined with the explicit formula (\ref{semigroup}), the results
given in Lemma \ref{grandslambda} and Corollary \ref{corl1} allow us to assert 
the following estimate on the difference between the analytic semigroups.
\begin{prop}\label{cordif}
	Let $t\in (0,T),$ $\b>\frac{1}{2}$ and $\gamma>0$ such that $1-\frac{3}{2}(\b+\gamma)>0.$ There exists a constant $C_8$ depending only on
	$r, \vartheta,D,\mub$ such that, for $\e$ small enough,
\begin{align*}
	\lV\lp e^{tL_0}-e^{tL_\e}\rp\lp U,V\rp\rV_\infty \leq & 
	\e^{1-\frac{3}{2}(\b+\gamma)}\lp\Vert U\Vert_{L^1(x)}+\lV \lV V\rV_{L^\infty(y)}\rV_{L^1(x)}\rp C_8\lp e^{TC_8}+\frac{1}{t}\rp \\
	& +	\e^{2\b-1}\lV\lp U,V\rp\rV_\infty \frac{C_8}{t}e^{-\frac{t}{C_8\e^\b}}.
\end{align*}
\end{prop}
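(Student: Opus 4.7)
The plan is to apply the integral representation (\ref{semigroup}) to both semigroups and split the contour at the threshold $|\la|=\e^{-\b}$:
\begin{equation*}
(e^{tL_0}-e^{tL_\e})(U,V) \;=\; \frac{1}{2\pi i}\int_{\Gamma_{r,\vartheta}}e^{t\la}\bigl(R(\la,L_0)-R(\la,L_\e)\bigr)(U,V)\,d\la,
\end{equation*}
with $\Gamma_{r,\vartheta}=\Gamma_1\cup\Gamma_2$, where $\Gamma_1=\Gamma_{r,\vartheta}\cap\{|\la|\le\e^{-\b}\}$ and $\Gamma_2$ is its complement. On $\Gamma_1$ I would use Corollary \ref{corl1}; on $\Gamma_2$, I would combine Lemma \ref{grandslambda} with the sectorial bound $\|R(\la,L_0)\|\le M_0/|\la-\vp|$ from Proposition \ref{sectorial}.

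For the high-frequency piece, note that the hypothesis $1-\tfrac{3}{2}(\b+\gamma)>0$ forces $\b<\tfrac{2}{3}<1$, so that $\max(\e^\b,\e^{2\b-1})=\e^{2\b-1}$ for $\e$ small; both resolvent norms are then dominated by $C\e^{2\b-1}$ on $\Gamma_2$. Parametrising $\la=\vp+r+s\,e^{i\vartheta\,\sgn s}$, the standard decay $|e^{t\la}|\le e^{t(\vp+r)}e^{-t|s||\cos\vartheta|}$ and an elementary integration over the tail $|s|\gtrsim\e^{-\b}$ give
\begin{equation*}
\int_{\Gamma_2}|e^{t\la}|\,|d\la|\;\le\;\frac{2e^{t(\vp+r)}}{t|\cos\vartheta|}\,e^{-t\e^{-\b}|\cos\vartheta|}.
\end{equation*}
For $\e$ small enough one has $\vp+r\le\tfrac{1}{2}\e^{-\b}|\cos\vartheta|$, so the product of exponentials collapses into $e^{-t/(C_8\e^\b)}$; multiplying by the operator bound $C\e^{2\b-1}\|(U,V)\|_\infty$ produces the second summand of the claim.

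For the low-frequency piece, Corollary \ref{corl1} gives the $\la$-independent estimate
\begin{equation*}
\|(R(\la,L_0)-R(\la,L_\e))(U,V)\|_\infty\;\le\; C_2\,\e^{1-\tfrac{3}{2}(\b+\gamma)}\bigl(\|U\|_{L^1(x)}+\|\|V\|_{L^\infty(y)}\|_{L^1(x)}\bigr),
\end{equation*}
so it remains to bound $\int_{\Gamma_1}|e^{t\la}|\,|d\la|$. I would split this further at $|s|=1$: on the short arc $|s|\le 1$, bounding $|e^{t\la}|$ by $e^{T(\vp+r)}\le e^{TC_8}$ yields a contribution of order $e^{TC_8}$; on the tail $1\le|s|\le\e^{-\b}$, the exponential decay $e^{-t|s||\cos\vartheta|}$ integrates to at most $1/(t|\cos\vartheta|)$. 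Summing these two pieces and multiplying by the bound from Corollary \ref{corl1} delivers the first summand.

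The argument is essentially a transcription of classical analytic-semigroup theory (cf.\ \cite{henry}, \cite{Lunardi}); the genuinely nontrivial inputs have already been encapsulated in Lemma \ref{grandslambda} and Corollary \ref{corl1}. The only delicate point I anticipate is arranging the low-frequency splitting so that the $1/t$ singularity as $t\to 0^+$ and the $e^{TC_8}$ growth in $T$ emerge in the \emph{additive} form $C_8(e^{TC_8}+1/t)$, which is the form suited to the forthcoming Gronwall argument, rather than the coarser multiplicative $C_8 e^{TC_8}/t$ that a single global estimate $\int_\Gamma|e^{t\la}||d\la|\lesssim e^{TC_8}/t$ would otherwise provide.
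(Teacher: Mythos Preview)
Your proposal is correct and follows essentially the same route as the paper: split the contour at $|\la|=\e^{-\b}$, apply Lemma~\ref{grandslambda} (together with the sectorial bound for $L_0$) on the outer piece and Corollary~\ref{corl1} on the inner piece, then integrate $|e^{t\la}|$ along each arc. Your extra care in noting $\b<1$ so that $\max(\e^\b,\e^{2\b-1})=\e^{2\b-1}$, in absorbing the factor $e^{t(\vp+r)}$ into the tail exponential for $\e$ small, and in splitting the inner arc at $|s|=1$ to obtain the additive form $e^{TC_8}+1/t$, exactly matches the paper's terse remark that ``$\Gamma\cap\{z,\Re z\ge 0\}$ is bounded.''
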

\begin{proof}
\begin{align}
 \lV\lp e^{tL_0}-e^{tL_\e}\rp\lp U,V\rp\rV_\infty \leq & \frac{1}{2\pi}\int_{\Gamma}|e^{t\la}|. 
    \lV\lp R(\la,L_0)-R(\la,L_\e)\rp\lp U,V\rp\rV_\infty d\la \nonumber \\
  \leq & \frac{1}{2\pi}\int_{\la\in\Gamma,|\la|>\e^{-\b}}|e^{t\la}|.\lp\lV R(\la,L_0)\rV+\lV R(\la,L_\e)\rV\rp \lV(U,V)\rV_\infty d\la \label{ligne2} \\
   & + \frac{1}{2\pi}\int_{\la\in\Gamma,|\la|<\e^{-\b}}|e^{t\la}|.\lV\lp R(\la,L_0)-R(\la,L_\e)\rp\lp(U,V)\rp\rV_\infty d\la. \label{ligne3}
\end{align}
We recall that $\frac{\pi}{2}<\vartheta<\frac{3\pi}{4}.$ Hence, for large $\la,$ the curve $\Gamma_{r,\vartheta}$ 
lies in the half-plane $\left\{ z,\Re z <0\right\}.$
From Lemma \ref{grandslambda} (and Proposition \ref{sectorial} for $L_0$), the 
first term of the right handside of the above inequality satisfies, for some constant $C,$
$$
(\ref{ligne2}) \leq 2\int_{s>\e^{-\b}} C e^{-ts}\e^{2\b-1}\lV\lp U,V\rp\rV_\infty ds.
$$
The second term satisfies from Corollary \ref{corl1}
$$
(\ref{ligne3}) \leq \e^{1-\frac{3}{2}(\beta+\gamma)}
 \lp \Vert U\Vert_{L^1(x)}+\lV \lV V\rV_{L^\infty(y)}\rV_{L^1(x)}\rp \int_{\la\in\Gamma,|\la|<\e^{-\b}}|C e^{t\la}|d\la.
$$
It remains to notice that $\Gamma\cap \{z,\ \Re z\geq0\}$ is bounded, and the proof of Proposition \ref{cordif} is complete.
\end{proof}

Remark that $e^{tL_\e},e^{tL_0}\to Id$ as $t\to0.$ So the estimate given in Proposition \ref{cordif} is far from optimal, especially for small $t.$
But it will be enough for our purpose.

\subsection{Uniform decay in $x$}
\begin{lem}\label{decroissancesolutions}
Let $(u,v)(t)$ be the solution of (\ref{BRReq2}), and, for all $\e\in(0,1),$ $(u_\e,v_\e)(t)$ the solution 
of (\ref{RPeps}), both with initial datum $(u_0,v_0).$ Then, there exists $K_2,\las,\cs>0$ independent of $\e$
such that for all 
$(x,y)\in\R^2,t>0,$
$$
\max\ \lp u(t,x),u_\e(t,x),v(t,x,y),v_\e(t,x,y)\rp \leq K_2 e^{-\las\lp |x|-\cs t\rp}.
$$
\end{lem}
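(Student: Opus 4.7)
The plan is to dominate both $(u_\e,v_\e)$ and $(u,v)$ by an exponential travelling supersolution built from a linear travelling wave of the linearised systems (\ref{RPlieps})--(\ref{BRRli}), invoking Remark \ref{remark1}. First, I would fix a speed $c > c^*_0$ once and for all. By the convergence established in Proposition \ref{convergencec}, one has $c > c^*_\e$ for every $\e \in [0, \e_0)$ with $\e_0$ small enough. For each such $\e$, Proposition \ref{defspreadingspeed} produces a decay rate $\la_\e > 0$ (take the smaller intersection of $\Gamma_1$ with $\Gamma_2^\e$) together with a positive profile $\phi_\e \in H^1(\R)$ so that $(c, \la_\e, \phi_\e)$ is a linear travelling wave; then $e^{-\la_\e(x - ct)}(1, \phi_\e(y))$ solves (\ref{RPlieps}), and by Remark \ref{remark1} combined with the $x \mapsto -x$ symmetry, the ansatz
\[
M\bigl(e^{-\la_\e(x - ct)} + e^{\la_\e(x + ct)}\bigr)\bigl(1, \phi_\e(y)\bigr)
\]
is a supersolution of (\ref{RPeps}) for any $M > 0$, and likewise for (\ref{BRReq2}) when $\e = 0$ (with $\phi_0$).

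To make the construction uniform in $\e$, I would use Lemma \ref{convergencephi} and the geometry of Figure \ref{2graphs} to ensure $\la_\e \to \la_0 > 0$ and $\phi_\e \to \phi_0$ uniformly on compact sets as $\e \to 0$. Setting $\las := \inf_{\e \in [0, \e_0)} \la_\e > 0$ and $\cs := c$, Lemma \ref{borneuniformephi} supplies a uniform $L^\infty$ bound on $\phi_\e$, and the explicit expression $\phi_\e(y) = K(\e)e^{-\sqrt{P(\la_\e)/d}\,|y|}$ for $|y|>\e$ together with the positive limit $K(\e) \to K_0$ extracted in the proof of that lemma yields a uniform positive lower bound on $\phi_\e$ on every fixed compact $y$-set. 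Since $(u_0, v_0)$ is compactly supported, this lets me choose $M$ independent of $\e$ with $(u_0, v_0) \leq M\bigl(e^{-\las|x|}, e^{-\las|x|} \phi_\e(y)\bigr)$.

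The parabolic comparison principle, applied to the cooperative system (\ref{RPeps}) and its local analogue (\ref{BRReq2}), then yields the pointwise domination by the ansatz above for all $t>0$. In the region $|x| \geq \cs t$ this gives the bound $2M\|\phi_\e\|_\infty e^{-\las(|x|-\cs t)}$; in $|x| < \cs t$, where $e^{-\las(|x|-\cs t)} \geq 1$, I would conclude by taking $K_2$ larger than the uniform $L^\infty$ bound on the solutions, which comes either from the stationary supersolution estimate (\ref{uniformsupersol}) (when the assumption of Theorem \ref{uniformspreading} on the size of the initial datum is in force) or from the uniform Lipschitz assumption (\ref{lipf}) via a standard maximum-principle argument.

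The main obstacle is the uniform-in-$\e$ nondegeneracy of the travelling wave: one must know that $\la_\e$ does not collapse to $0$ and that $\phi_\e$ stays bounded above and bounded away from $0$ on compact sets as $\e \to 0$. This is exactly what Lemmas \ref{borneuniformephi} and \ref{convergencephi} guarantee, and once these are invoked the remainder is a routine comparison argument.
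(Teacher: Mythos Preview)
Your approach is essentially that of the paper: dominate the solutions by exponential supersolutions coming from linear travelling waves (Remark \ref{remark1}) and invoke Lemmas \ref{borneuniformephi}--\ref{convergencephi} to control the profile $\phi_\e$ and the decay rate uniformly in $\e$. The only noteworthy difference is that the paper works with the critical waves $(c^*_\e,\la^*_\e,\phi^*_\e)$ and makes the specific geometric choice $\las=\cs/D$ with $\la_2^-(\cs)=\cs/D$, which forces $c^*_\e<\cs$ and $\la^*_\e>\las$ for \emph{every} $\e>0$; your route through Proposition \ref{convergencec} with a fixed $c>c^*_0$ only covers $\e<\e_0$, so as stated it does not quite give the lemma for all $\e\in(0,1)$---this is easily repaired by taking $c$ above $\sup_{\e\in(0,1]}c^*_\e$ (finite by continuity in $\e$), or by adopting the paper's geometric choice. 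Your explicit handling of the region $|x|<\cs t$ via the uniform $L^\infty$ bound is in fact more careful than the paper's own passage on this point.
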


\begin{proof}
 \textit{First case: $D>2d.$} Let $\cs$ be the unique positive solution of 
 the equation in $c$ 
$$\la_2^-(c):=\frac{c-\sqrt{c^2-c_{KPP}^2}}{2d} = \frac{c}{D}.$$
To this velocity $\cs$ we associate the
decay rate $\las:=\frac{\cs}{D}.$ Thus, from simple geometric considerations (see Figure \ref{2graphs}),
we have for all $\e>0$ that $c^*_\e,c^*_0<\cs$ and $\la^*_\e,\la^*_0>\las.$
Hence, for all $t>0,x\in\R,\e>0,$
$$
e^{-\la^*_\e(|x|-c^*_\e t)}<e^{-\las(|x|-\cs t)}.
$$
We recall that the linear travelling waves $(c^*_\e,\la_\e^*,\phi_\e^*)$ and $(c^*_0,\la^*_0,\phi^*)$ are
supersolutions for (\ref{RPeps}) and (\ref{BRReq2}).
From Lemma \ref{convergencephi}, 
as $(u_0,v_0)$ is continuous and compactly supported, we know that there exists 
a constant $K_1$ such that
$$
\begin{cases}
u_0(x)\leq K_1 e^{-\las |x|} & \forall x\in\R\\
v_0(x,y) \leq K_1  e^{-\las |x|} \min\{\phi^*(y), \underset{\e\in(0,1]}{\inf}\phi_\e^*(y)\} & \forall (x,y)\in\R^2.
\end{cases}
$$
From Lemma \ref{borneuniformephi}, there exists a constant $K_2$ such that
$$
\underset{\e\geq0}{\sup}\ \underset{y\in\R}{\sup}\ \phi_\e(y)\leq K_2.
$$
up to replace $K_2$ by $\max(K_1,K_2),$ the proof is completed.

\textit{Second case: $D\leq2d.$} In this case, for all $\e>0,$ $c^*_0=c^*_\e=c_{KPP}.$ Let us choose $\cs>c_{KPP}$ and
$\las\in\lp\la_2^-(\cs),\frac{\cs}{D}\rp$ and we conclude in the same fashion.
\end{proof}

\subsection{Proof of Theorem \ref{convsol}}
For $(u,v)\in X,$ set $F(u,v):=\lp 0,f(v)\rp$ the nonlinear term in the studied systems. 
From the regularity of $F$ and Proposition \ref{sectorial}, the solution of (\ref{BRReq2}) $(u,v)$ and of (\ref{RPeps}) $(u_\e,v_\e)$
can be written in the form
\begin{align*}
	(u,v)(t) & = e^{tL_0}(u_0,v_0) + \int_0^t e^{(t-s)L_0}F\lp u(s),v(s)\rp ds \\
	(u_\e,v_\e)(t) & = e^{tL_\e}(u_0,v_0) + \int_0^t e^{(t-s)L_\e}F\lp u_\e(s),v_\e(s)\rp ds.
\end{align*} 

Set $0<\tau_1<T.$ For all $t\in(\tau_1,T),$ 
\begin{align*}
	(u,v)(t)-(u_\e,v_\e)(t) = & \lp e^{tL_0}-e^{tL_\e}\rp(u_0,v_0) \\
	& +	\int_0^t e^{(t-s)L_0}F\lp u(s),v(s)\rp-e^{(t-s)L_\e}F\lp u_\e(s),v_\e(s)\rp ds\\
	= & \lp e^{tL_0}-e^{tL_\e}\rp(u_0,v_0) \\
	& + \int_0^t \lp e^{(t-s)L_0}-e^{(t-s)L_\e}\rp F\lp u_\e(s),v_\e(s)\rp ds \\
	& + \int_0^t e^{(t-s)L_0}\lp F\lp u(s),v(s)\rp-F\lp u_\e(s),v_\e(s)\rp\rp ds.
\end{align*}

It is easy to see (cf \cite{BRR1}) that for all $t>0,$ $\displaystyle \lV e^{tL_0}\rV\leq \max(1,\frac{1}{\mub}).$
Set
$$
\d(t):=\lV(u,v)(t)-(u_\e,v_\e)(t)\rV_\infty,\ \a=\min(1-\frac{3}{2}(\b+\gamma),2\b-1)>0.
$$
and $\d(t)$ satisfies the following inequation:
\begin{align}
\d(t) \leq & \lV\lp e^{tL_0}-e^{tL_\e}\rp\lp u_0,v_0\rp\rV_\infty \label{gron1}\\
 & + \int_0^t \lV \lp e^{(t-s)L_0}-e^{(t-s)L_\e}\rp F\lp u_\e(s),v_\e(s)\rp \rV_\infty ds \label{gron2} \\
 & + \int_0^{\tau_1} \max\lp 1,\frac{1}{\mub}\rp\lV F\rV_{Lip} \lp\lV(u,v)(s)\rV_\infty+\lV(u_\e,v_\e)(s)\rV_\infty\rp ds \label{gron2bis} \\
 & + \int_{\tau_1}^t \max\lp 1,\frac{1}{\mub}\rp\lV F\rV_{Lip} \d(s) ds \label{gron3}
\end{align}
From Proposition \ref{cordif}, we can assert that for some constant $C_9$ depending 
only on $r,$ $\vartheta,$ $D,$ $\mub,$ $\tau_1,$ and $T,$ the first term (\ref{gron1})
satisfies
\begin{equation}\label{gron4}
 \lV\lp e^{tL_0}-e^{tL_\e}\rp\lp u_0,v_0\rp\rV_\infty \leq
 \e^\a C_9 \lp\lp\|u_0\|_\infty+\|v_0\|_\infty\rp\lp 1+|supp(v_0)|+|supp(u_0)|\rp\rp.
\end{equation}

From Lemma \ref{decroissancesolutions}, we get that for all $t>0,\e>0,$
\begin{equation}\label{dominationl1}
\|u_\e(t)\|_\infty,\|v_\e(t)\|_\infty \leq K_2 e^{\las\cs t},\qquad
\lV \lV v_\e(t)\rV_{L^\infty(y)}\rV_{L^1(x)} \leq 2\frac{K_2}{\las}e^{\las\cs t},
\end{equation}
and the same estimates holds for $(u,v).$ So we get for the third term (\ref{gron2bis})
\begin{equation}\label{gron2terce}
\int_0^{\tau_1} \lp\lV(u,v)(s)\rV_\infty+\lV(u_\e,v_\e)(s)\rV_\infty\rp ds \leq
4\tau_1  K_2 e^{\las\cs \tau_1}.
\end{equation}
Recall that $f'(0)>0.$ Hence, any supersolution of (\ref{RPeps}) is also a supersolution of the linear system (\ref{RPepsli}). 
Thus, Lemma \ref{decroissancesolutions} is also available for time-dependent solutions of the linear 
system (\ref{RPepsli}), which in particular entails
\begin{equation}\label{normesemigr}
\forall\e>0,\ \forall t>0,\ \lV e^{tL_\e}\rV \leq K_2 e^{\las\cs t}.
\end{equation}
Now we can deal with the second term (\ref{gron2}). Let us choose $\tau_2>0$ small enough,
and set
\begin{align*}
	\int_0^t \lV \lp e^{(t-s)L_0}-e^{(t-s)L_\e}\rp F\lp u_\e(s),v_\e(s)\rp \rV_\infty ds & \leq \\
		\int_0^{t-\tau_2} \lV \lp e^{(t-s)L_0}-e^{(t-s)L_\e}\rp F\lp u_\e(s),v_\e(s)\rp \rV_\infty ds & + \\
		\int_{t-\tau_2}^t \lp\lV e^{(t-s)L_0}\rV+\lV e^{(t-s)L_\e}\rV\rp \|F\|_{Lip} \lV\lp u_\e(s),v_\e(s)\rp \rV_\infty ds.
\end{align*}
From (\ref{dominationl1}) and (\ref{normesemigr}), 
\begin{align}
\int_{t-\tau_2}^t \lp\lV e^{(t-s)L_0}\rV+\lV e^{(t-s)L_\e}\rV\rp \|F\|_{Lip} \lV\lp u_\e(s),v_\e(s)\rp \rV_\infty ds  & \leq  \nonumber\\
 \tau_2 \|F\|_{Lip}\lp  K_2 e^{\las\cs T}+1+\frac{1}{\mub}\rp\|(u_0,v_0)\|_\infty.
 \label{gron5}
\end{align}
From Proposition \ref{cordif} and (\ref{dominationl1}), 
\begin{align}
	\int_0^{t-\tau_2} \lV \lp e^{(t-s)L_0}-e^{(t-s)L_\e}\rp F\lp u_\e(s),v_\e(s)\rp \rV_\infty ds & \leq \nonumber \\
	\e^\a TC_8\lp e^{TC_8}+\frac{1}{\tau_2}\rp\lp 2+\frac{4}{\las}\rp\lp K_2e^{\las\cs T}\rp. \label{gron6}
\end{align}
We can now conclude the proof of Theorem \ref{convsol} by a classical Gronwall argument in (\ref{gron1})-(\ref{gron3}),
choosing $\tau_1,$ then $\tau_2$ and at last $\e$. Let $\eta>0$ be any small quantity. Let $\tau_1>0$ small enough
such that $\displaystyle 4\tau_1  K_2 e^{\las\cs \tau_1}\leq \frac{\eta}{4}.$ Let $\tau_2>0$ such that 
$$
\tau_2 \|F\|_{Lip}\lp  K_2 e^{\las\cs T}+1+\frac{1}{\mub}\rp\|(u_0,v_0)\|_\infty \leq \frac{\eta}{4}.
$$
Now, let us choose $\e>0$ such that 
\begin{equation*}
\begin{cases}
 \e^\a C_9 \lp\lp\|u_0\|_\infty+\|v_0\|_\infty\rp\lp 1+|supp(v_0)|+|supp(u_0)|\rp\rp \leq \frac{\eta}{4} \\
 \e^\a TC_8\lp e^{TC_8}+\frac{1}{\tau_2}\rp\lp 2+\frac{4}{\las}\rp\lp K_2e^{\las\cs T}\rp \leq \frac{\eta}{4}.
\end{cases}
 \end{equation*}
 Hence, $(\ref{gron1})+(\ref{gron2})+(\ref{gron2bis})\leq \eta$
 and we get from Gronwall's inequality for all $t\in[\tau_1,T]$:
 $$
 \d(t)\leq \eta e^{\|F\|_{Lip}\lp1+\frac{1}{\mub}\rp(T-\tau_1)}.
 $$
\qed

\section{Uniform spreading}

Once again, we consider nonnegative compactly supported initial datum $(u_0,v_0).$
We also made the following assumption on $(u_0,v_0):$
\begin{equation}
 (u_0,v_0)\leq(\frac{1}{\mub}m,m)
\end{equation}
where $m$ is given by (\ref{uniformsupersol}). The purpose is
now to prove Theorem \ref{uniformspreading}.
 Our notations are these of
Section \ref{sectionvitesse}. $c^*_0$ is the asymptotic speed of spreading associated to the limit system (\ref{BRReq2}), 
$c^*_\e$ the one associated to (\ref{RPeps}).
Recall that in the case $D\leq 2d,$ the spreading is driven by the field,
and $c^*_0=c_\e^*=2\sqrt{df'(0)}.$
In both systems, the spreading in this case is independent 
of the line, and the uniform spreading is easy to get.
We will focus on the case $D>2d,$ where the spreading is enhanced by the line.

\paragraph{First part: $c>c^*_0.$}
This is the easiest case. Let $c_1=\frac{c+c^*_0}{2}.$ From Proposition \ref{convergencec}, there exists $\e_0$ such that $\forall\e<\e_0,$ 
$c_\e^*\leq c_1.$ From Lemma \ref{decroissancesolutions}, there 
exists $K$ such that 
$$
\begin{cases}
u_0(x)\leq K_1 e^{-\las |x|} & \forall x\in\R\\
v_0(x,y) \leq K_1  e^{-\las |x|} \underset{\e\in[0,1]}{\inf}\phi_\e(y;c_1) & \forall (x,y)\in\R^2.
\end{cases}
$$
Then, from Proposition \ref{defspreadingspeed} and Lemma \ref{decroissancesolutions},
$$
\forall \e<\e_0,\ u_\e(t,x)\leq K e^{-\las(x-c_1t)},
$$
which concludes the proof of the first part of Theorem \ref{uniformspreading}.
\qed
\paragraph{Second part: $c<c^*_0$}
\subparagraph{Background on subsolutions}
Let us recall that in \cite{BRR1} (resp. in \cite{Pauthier}) the argument to prove the spreading was to devise stationary
compactly supported subsolutions of (\ref{BRReq2}) (resp. (\ref{RPeps})) in a moving framework at some speed $c$ less than
and close to $c^*_0$ (resp. $c^*_\e$). More precisely, for $L$ large enough, set $\Omega^L:=\R\times (-L,L)$ 
and let us consider the following systems for some $\d\ll1$ 
:
\begin{equation}\label{subsolBRR}
 \begin{cases}
 -DU''+cU' = V(x,0)-\mub U &  x\in\R \\
 -d\Delta V+c\partial_x V = \lp f'(0)-\d\rp V & (x,y)\in\Omega^L \\
 -d\lp\partial_x V(x,0^+)-\partial_x V(x,0^-)\rp = \mub U(x)-V(x,0) & x\in\R \\
 V(x,\pm L) = 0 & x\in\R.
 \end{cases}
\end{equation}

\begin{equation}
 \label{subsolRPeps}
\begin{cases}
 -DU''+cU'=-\mub U+\int_{(-L,L)}\nu_\e(y)V(x,y)dy & x\in \R \\
 -d\Delta V+c\partial_x V=\lp f'(0)-\d\rp V+\mu_\e(y)U(x)-\nu_\e(y)V(x,y) & (x,y)\in \Omega^L \\
 V(x,\pm L) = 0 & x\in \R.
\end{cases}
\end{equation}
It was showed by an explicit computation in \cite{BRR1} and an analysis of a spectral problem
in \cite{Pauthier} that there exists a unique $c:=c^*_0(L)$ (resp. $c^*_\e(L)$) such that (\ref{subsolBRR})
(resp. (\ref{subsolRPeps})) admits a unique solution of the form
\begin{equation}\label{soussolexp} 
 \begin{pmatrix}
 U(x) \\
V(x,y)
\end{pmatrix}
= e^{\la x} \begin{pmatrix}
                  1 \\ \vp(y)
                 \end{pmatrix}
\end{equation}
with $c^*_0(L)<c^*_0,$ $c^*_\e(L)<c^*_\e,$ and 
$$
\begin{cases}
 \lim_{\d\to0}\lim_{L\to\infty}c^*_0(L) = \lim_{L\to\infty}\lim_{\d\to0}c^*_0(L) = c^*_0 \\
 \lim_{\d\to0}\lim_{L\to\infty}c^*_\e(L) = \lim_{L\to\infty}\lim_{\d\to0}c^*_\e(L) = c^*_\e.
\end{cases}
$$
Using (\ref{soussolexp}), the system (\ref{subsolRPeps}) reads on
$(c,\la,\vp)$ 
\begin{equation}
\label{eqsoussol}
\begin{cases}
-D\la^2+\la c+\mub  = \int_{(-L,L)} \nu(y)\phi(y)dy \\
-d\vp''(y)+(c\la-d\la^2-f'(0)+\d+\nu_\e(y))\vp(y)  =  \mu_\e(y) & \vp(\pm L) = 0
\end{cases}
\end{equation}
and $c^*_\e(L)$ was given as the first $c$ such that the graphs of the two following 
functions intersect
$$
\begin{cases}
\Psi_1 \ : \ \la \mapsto -D\la^2+\la c+\mub \\
\Psi_2 \ : \ \la \mapsto \int_{(-L,L)}\nu_\e(y)\vp(y)dy
\end{cases}
$$
where, for $\Psi_2,$ $\vp$ is given by the unique solution of second equation
of (\ref{eqsoussol}). Let us call $\Gamma_1,$ resp. $\Gamma_2,$ the graph of $\Psi_1,$ resp. $\Psi_2.$
So we should keep in mind that in (\ref{soussolexp}), both $\la$ and $\vp$ depend on $L,\d,\e.$
Using the same kind of arguments as for Lemma \ref{borneuniformephi} and \ref{convergencephi}, we can 
assert that this dependence is continuous for the $L^\infty$-topology. In particular, the subsolution
(\ref{soussolexp}) of (\ref{subsolRPeps}) converges uniformly in $\d,L$ to the subsolution
of (\ref{subsolBRR}) as $\e$ goes to 0, and of course 
$c^*_\e(L)\to c^*_0(L)$ as $\e\to0$. Hence, the notations are not confusing, as we can continuously extend 
$\vp(\e,\d,L)$ to $\vp(0,\d,L)$ as $\e$ goes to 0.

So we get that both $\vp$ and $\Psi_2$ are:
\begin{itemize}
	\item analytical in $\la,c,\d;$
	\item uniformly continuous in $L$ and $\e,$ up to $\e=0.$
\end{itemize}
Then, a perturbative argument gives for some $c$ less than but close to $c^*(L)$ 
a compactly supported subsolution of (\ref{subsolRPeps}), or (\ref{subsolBRR}) in the limit case $\e=0.$

\subparagraph{Spreading} Let $c<c^*_0.$ Let $c_1=\frac{c+c_0^*}{2},$ $c_2=\frac{c+c_0^*}{4}.$
From Proposition \ref{convergencec}, there exists $\e_1$ such that for all $\e<\e_1,$ 
$c^*_\e>c_2.$ Now, for some $\d$ small enough and some $L$ large enough, \cite{BRR1} and 
\cite{Pauthier} give us a family of subsolutions of (\ref{subsolBRR}) and (\ref{subsolRPeps})
denoted $(\su,\sv)$ and $(\su_\e,\sv_\e)$ for some $c_\e>c_1$ for $\e<\e_1.$ The uniform continuity
of $\Psi_2$ allows us to take same $\d$ and $L$ for all $\e\in[0,\e_1).$
Hence, the convergence result given in Lemma \ref{convergencephi} adapted to this case
gives that $$(\su_\e,\sv_\e)\underset{\e\to0}{\longrightarrow}(\su,\sv)$$
for the $L^\infty$-norm. Set:
$$
\begin{cases}
u_1(x)=\inf\{\su_\e(x),\ \e\in[0,\e_1)\},\ v_1(x)=\inf\{\sv_\e(x,y),\ \e\in[0,\e_1)\} \\
u_2(x)=\sup\{\su_\e(x),\ \e\in[0,\e_1)\},\ v_2(x)=\sup\{\sv_\e(x,y),\ \e\in[0,\e_1)\}.
\end{cases}
$$
Both $u_1,v_1,u_2,v_2$ are nonnegative, continuous and compactly supported. Let us set $\gamma$
such that
$$
\gamma(f'(0)-\d)\lV v_2\rV_\infty \leq f\lp\lV v_2\rV_\infty\rp \textrm{ and }\gamma \lV u_2\rV_\infty <\frac{1}{\mub}. 
$$
We know that $(u,v)(t)$ converges locally uniformly to the steady state $(\frac{1}{\mub},1).$ 
So, let $t_1$ such that $(u,v)(t_1)>\gamma(u_2,v_2).$ From Theorem \ref{convsol}, there exists
$\e_2$ such that for all $\e<\e_2,$ $(u_\e,v_\e)(t_1)>\gamma(u_2,v_2).$ Up to replace 
$\e_2$ by $\min(\e_1,\e_2),$ we get from comparison principle that
$$
\forall t>t_1, \ \forall \e<\e_2,\ \lp u_\e(t,x),v_\e(t,x,y)\rp > \gamma \lp u_1(x-c_1(t-t_1)),v_1(x-c_1(t-t_1),y)\rp.
$$
Let $(\ut,\vt)$ and $(\ut_\e,\vt_\e)$ the solutions of (\ref{BRReq2}) and (\ref{RPeps}) starting at
$t=t_1$ from $\gamma(u_1,v_1).$ Then, considering the hypotheses on $(u_0,v_0),$ for all 
$t>t_1,$ we have:
$$
\begin{cases}
(\ut,\vt)(t)<(u,v)(t)<\lp\frac{1}{\mub},1\rp \\
(\ut_\e,\vt_\e)(t)<(u_\e,v_\e)(t)<\lp U_\e,V_\e\rp.
\end{cases}
$$
Now, from Proposition \ref{convVeps}, there exists $\e_3$ such that if
$\e<\e_3,$ $\displaystyle\lb U_\e-\frac{1}{\mub}\rb <\frac{\eta}{3}.$
Let $t_2$ such that 
$$
\forall x\in \textrm{ supp}(u_1), \forall t>t_2,
\ \lb\ut(t,x)-\frac{1}{\mub}\rb<\frac{\eta}{3}.$$
From Theorem \ref{convsol}, there exists $\e_4$ such that if
$
\e<\e_4,$ $\displaystyle\lb (\ut_\e-\ut)(t_2)\rb<\frac{\eta}{3}.
$
Now, set $\e_0=\min(\e_1,\e_2,\e_3,\e_4),$ $\displaystyle T_0=t_2\frac{c_0^*}{c_0^*-c_2},$ and the
proof of Theorem \ref{uniformspreading} is concluded.
\qed

\newpage

\setcounter{equation}{0}
\begin{appendices}
\renewcommand{\theequation}{A.\arabic{equation}}

\section{}
Here we prove for the convenience of the reader the lemma used in the proof of Lemma \ref{gdesfrequences}. 
It relies on a Kato-type inequality.

\begin{lemA}\label{katoineq}
 Let $\phi,m,f$ be functions in $BUC(\R,\C)$ such that 
 \begin{equation}\label{kato1}
  -\phi''(y)+m(y)\phi(y)=f(y), \qquad y\in\R.
 \end{equation}
 If there exists $\k>0$ such that for all $y\in\R,$ $\Re{m(y)}\geq \k^2$ then 
$|\phi|$ satisfies
\begin{equation}\label{conv}
 |\phi(y)|\leq \frac{1}{2\k}\int_\R e^{-\k|z|}\lb f(z)\rb dz.
\end{equation}
\end{lemA}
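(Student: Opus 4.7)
The plan is to reduce the complex-valued equation (\ref{kato1}) to a real scalar differential inequality for $|\phi|$ by a Kato-type regularization, and then bound $|\phi|$ pointwise by comparison with the Green's function of $-\partial_{yy} + \k^2$ on $\R$.

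The first step aims to prove that, in the distribution sense,
\begin{equation*}
-|\phi|'' + \k^2 |\phi| \leq |f| \qquad \text{on } \R.
\end{equation*}
Because $|\phi|$ fails to be $C^2$ at zeros of $\phi$, I would use the regularization $\phi_\eta := (|\phi|^2 + \eta^2)^{1/2}$, which is $C^2$ since $\phi$ itself is $C^2$ by elliptic regularity applied to (\ref{kato1}). Differentiating twice gives $\phi_\eta\phi_\eta'' = |\phi'|^2 + \Re{\bar\phi\phi''} - (\phi_\eta')^2$, and the inequality $|\Re{\bar\phi\phi'}|^2 \leq |\phi|^2 |\phi'|^2 \leq \phi_\eta^2 |\phi'|^2$ forces $\phi_\eta\phi_\eta'' \geq \Re{\bar\phi\phi''}$. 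Substituting $\phi'' = m\phi - f$ then yields pointwise
\begin{equation*}
-\phi_\eta'' + \Re{m}\,\frac{|\phi|^2}{\phi_\eta} \leq \frac{|\phi|}{\phi_\eta}\,|f| \leq |f|.
\end{equation*}
Passing to the distributional limit $\eta \to 0^+$, together with the coercivity $\Re{m} \geq \k^2$ and $|\phi| \geq 0$, yields the announced Kato inequality.

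For the second step, set $w(y) := \frac{1}{2\k} \int_\R e^{-\k|y-z|}|f(z)|\,dz$. This function is bounded since $f \in BUC$, and satisfies $-w'' + \k^2 w = |f|$ classically. The difference $\zeta := |\phi| - w$ is then bounded and satisfies $-\zeta'' + \k^2 \zeta \leq 0$ in $\mathcal{D}'(\R)$. To conclude $\zeta \leq 0$, I would compare on each interval $[-R,R]$ with $v_R(y) := M \cosh(\k y)/\cosh(\k R)$, where $M := \max(\sup \zeta, 0)$: this $v_R$ solves $-v_R'' + \k^2 v_R = 0$ and satisfies $v_R(\pm R) \geq \zeta(\pm R)$, so the weak maximum principle gives $\zeta(y) \leq v_R(y)$ on $[-R,R]$. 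Letting $R \to \infty$ with $y$ fixed collapses the right-hand side to $0$, whence $|\phi(y)| \leq w(y)$ for every $y$, which is the desired convolution bound.

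The one non-routine point is making the Kato regularization fully rigorous, specifically the passage $\phi_\eta'' \to |\phi|''$ in $\mathcal{D}'(\R)$; the comparison with the Green's function and the maximum principle on the full line is standard once the distributional inequality for $|\phi|$ is in hand.
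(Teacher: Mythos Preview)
Your proof is correct and follows essentially the same two-step structure as the paper: first derive the Kato inequality $-|\phi|'' + \k^2|\phi| \leq |f|$, then compare with the explicit solution of $-w'' + \k^2 w = |f|$ via the maximum principle. The paper differentiates $|\phi|$ formally (ignoring zeros) and invokes the elliptic maximum principle directly, whereas you supply the regularization $\phi_\eta = (|\phi|^2+\eta^2)^{1/2}$ and an explicit barrier argument on $[-R,R]$ to handle the unbounded domain, which makes the same argument more rigorous but does not change its substance.
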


\begin{proof}
  Let us first compute the second derivative of the modulus of a complex valued function. Let $\phi\in BUC(\R,\C)\cap \MC^2.$
  An easy computation yields
  $$
  |\phi|''=\frac{\Re{\phib\phi''}}{|\phi|}+\frac{|\phi'|^2}{|\phi|}-\frac{\Re{\phib\phi'}^2}{|\phi|^3}.
  $$
  Hence, for all smooth enough complex-valued function of the real variable, we get
  \begin{equation}\label{modulephiseconde}
   -\lb\phi\rb''\leq-\frac{\Re{\phib\phi''}}{|\phi|}.
  \end{equation}
Now, let us multiply (\ref{kato1}) by $\displaystyle \frac{\phib}{|\phi|}$ and take the real part. It gives
\begin{equation*}
 -\frac{\Re{\phib\phi''}}{|\phi|}+\Re{m}|\phi|=\Re{f\frac{\phib}{|\phi|}}.
\end{equation*}
Using (\ref{modulephiseconde}) in the above inequality yields the following inequation for $|\phi|:$
\begin{equation*}
-|\phi(y)|''+\Re{m(y)}|\phi(y)|\leq|f(y)|. 
\end{equation*}
Now we are reduced to an inequation with real functions. If $\vp$ is the unique solution in $H^1(\R)$ of 
$$
-\vp''(y)+\k^2\vp(y)=\lb f(y)\rb,
$$
from the elliptic maximum principle, we get $|\phi|\leq\vp,$ which is exactly the desired inequality (\ref{conv}). 
\end{proof}

\end{appendices}

 \newpage
\bibliographystyle{plain}
\footnotesize
\bibliography{biblio}

\end{document}